\newcommand{\bbN}{\mathbb{N}}
\newcommand{\bbR}{\mathbb{R}}
\newcommand{\bbS}{\mathbb{S}}
\newcommand{\bbZ}{\mathbb{Z}}
\newcommand{\bfD}{\mathbf{D}}
\newcommand{\bfE}{\mathbf{E}}
\newcommand{\bfF}{\mathbf{F}}
\newcommand{\bfI}{\mathbf{I}}
\newcommand{\bfM}{\mathbf{M}}
\newcommand{\bfN}{\mathbf{N}}
\newcommand{\bfO}{\mathbf{O}}
\newcommand{\bfS}{\mathbf{S}}
\newcommand{\bfc}{\mathbf{c}}
\newcommand{\clF}{\mathcal{F}}
\newcommand{\clH}{\mathcal{H}}
\newcommand{\clI}{\mathcal{I}}
\newcommand{\clL}{\mathcal{L}}
\newcommand{\clP}{\mathcal{P}}
\newcommand{\clR}{\mathcal{R}}
\newcommand{\clS}{\mathcal{S}}
\newcommand{\clT}{\mathcal{T}}
\newcommand{\srD}{\mathscr{D}}
\newcommand{\srE}{\mathscr{E}}
\newcommand{\srF}{\mathscr{F}}
\newcommand{\srH}{\mathscr{H}}
\newcommand{\srR}{\mathscr{R}}
\newcommand{\Tan}{\mathrm{Tan}}
\newcommand{\spt}{\mathrm{spt}}
\newcommand{\scan}{{\it Scan}}
\newcommand{\newM}{\underline{\bfM}}
\newcommand{\join}{\hbox{{$\times$}\kern-0.65em{$\times$}}}
\DeclareMathAlphabet{\pazocal}{OMS}{zplm}{m}{n}
\newcommand{\myell}{\,\hbox{\vrule height 7pt depth 0pt
		\vrule height 0.4pt depth 0pt width 6pt}\,}
\DeclareMathAlphabet{\pazocal}{OMS}{zplm}{m}{n}
\newtheorem*{thm*}{Theorem}
\newtheorem{thm}{Theorem}[section]
\newtheorem{remark}[thm]{Remark}
\newtheorem{lem}[thm]{Lemma}
\newtheorem{definition}[thm]{Definition}
\newtheorem{proposition}[thm]{Proposition}
\newtheorem{ex}[thm]{Example}
\newtheorem{corollary}[thm]{Corollary}
\definecolor{darkgrn}{rgb}{0, 0.75, 0}
\begin{document}
\title{Partial Plateau's Problem with $H$-mass}
\author[1]{Enrique Alvarado\thanks{ealvarado@math.ucdavis.edu}}
\author[2]{Qinglan Xia\thanks{qlxia@math.ucdavis.edu}}
\affil[1,2]{Department of Mathematics, University of California at Davis, Davis, CA 95616}

\maketitle

\abstract{Classically, Plateau's problem asks to find a surface of least area with a given boundary $B$. 
In this article, we investigate a version of Plateau's problem, where the boundary of an admissible surface is only required to partially span $B$. 
Our boundary data is given by a flat $(m-1)$-chain $B$ and a smooth compactly supported differential $(m-1)$-form $\Phi$. 
We are interested in minimizing
$
\bfM(T) - \int_{\partial T} \Phi
$
over all $m$-dimensional rectifiable currents $T$ in $\bbR^n$ such that $\partial T$ is a subcurrent of the given boundary $B$. 
The existence of a rectifiable minimizer is proven with Federer and Fleming's compactness theorem. 
We generalize this problem by replacing the mass $\bfM$ with the $H$-mass of rectifiable currents.
By minimizing over a larger class of objects, called {\it scans with boundary}, and by defining their $H$-mass as a type of lower-semicontinuous envelope over the $H$-mass of rectifiable currents, we prove an existence result for this problem by using Hardt and De Pauw's BV compactness theorem.
}

%\tableofcontents

\section{Introduction}

\paragraph{A brief history of Plateau's problem.}

Loosely speaking, Plateau's problem asks to find a {\it surface} of least {\it area} with a given {\it boundary}.
The various answers to this question depend on the precise notion of a surface, of its area, and of its boundary. 
Although this problem was classically studied for surfaces in $\bbR^3$ with the use of mappings of surfaces~\cite{nitsche1989lectures}, geometric measure theory has now provided us with precise formulations and definitions of these italicized terms~\cite{federer-1969-1, simon-1984-lectures, morgan-2000-1, ambrosio-2000-2, reifenberg-1960-1}.  
Motivated by De Rham's notion of currents, in the seminal work~\cite{federer-1960-1}, Federer and Fleming solved a version of Plateau's problem with a class of generalized oriented submanifolds called {\it integral currents}, which come equipped with a natural notion of a boundary and surface area.
We refer the reader to \cite{fleming2015geometric} for a historical overview of Plateau's problem and geometric measure theory.

\paragraph{The $H$-mass and scans.}
The {\it mass} $\bfM(T)$ of a current $T$ is used as a way to measure its area, and for a rectifiable current $T = \clH^m\myell M\wedge \theta\xi$, its mass is given by $\int_M \theta\, d\clH^m$ where $\theta : M \to \bbN$ is its multiplicity function (see~\S\ref{sec: notationDefinitions}). For surfaces in $\bbR^3$, a mass minimizing current $T$ provides a good model for some but not all soap films.
For example, in~\cite{fleming1962oriented}, Fleming showed that 2-dimensional mass minimizing currents in $\bbR^3$ are smooth embedded surfaces away from their boundary, whereas general soap films have interior singularities. 
To get a better model for soap films, Almgren~\cite{almgren1986deformations} introduced the notion of {\it size} for rectifiable currents and investigated size minimizing currents. 
However, in general, the existence of size minimizing currents is difficult to obtain.
In part, this is due to the fact that size minimizing sequences may have unbounded mass, as can be seen in examples provided by F.~Morgan in~\cite{morgan1989size}.
The lack of bounded mass prevents one from applying Federer and Fleming's {\it compactness theorem}~\cite{federer-1960-1} to size minimizing sequences.

Another important measurement of surface area is the $\alpha$-mass, \begin{equation}\label{eq: alphaMass}
\bfM_\alpha(T) = \int_M \theta^\alpha\, d\clH^m 
\end{equation}
of a real rectifiable current $T = \clH^m\myell M \wedge \theta\xi$, where the parameter $\alpha \in [0, 1]$. 
The $\alpha$-mass acts as a bridge between the mass and the size of a real rectifiable current. 
When $\alpha = 0$, we get the size of $T$, and when $\alpha = 1$ we get its mass. 
Given two probability measures $\mu_+$ and $\mu_-$, any real rectifiable $1$-dimensional current $T$ with $\partial T = \mu_+ - \mu_-$ is called a {\it transport path from $\mu_+$ to $\mu_-$} in the study of branched (ramified) optimal transport~\cite{xia2003optimal, bernot2008optimal, xia2015motivations}. 
The $\alpha$-mass of $T$ is called the $\alpha$-cost of the transport path $T$.
A more general version of the $\alpha$-mass is the $H$-mass. 
For a {\it concave integrand} (Definition~\ref{def:concaveIntegrand}) $H : [0, \infty) \to [0, \infty)$, the $H$-mass of a real rectifiable current $T = \clH^m\myell M \wedge \theta\xi$ is given by $\int_M H\circ \theta\, d\clH^m$. 

In article~\cite{de2003size}, Hardt and De Pauw investigated a Plateau-type problem: minimizing the $H$-mass over all rectifiable currents with a fixed boundary.
Although $\bfM_H$ minimizing sequences of rectifiable currents may not necessarily have bounded mass, they showed that for any fixed $\epsilon > 0$, the minimizing sequences for the approximate problem: $\bfM_H + \epsilon \bfM$ do.
They then considered a sequence of rectifiable currents, each of which is a minimizer to the approximate problem for a sequence $\epsilon_i \to 0$.
Using their general BV-compactness theorem on this sequence, they obtained an $\bfM_H$ minimizing subsequence of rectifiable currents, that when viewed as scans, converges to a rectifiable scan. 
An $m$-dimensional {\it scan} in $\bbR^n$ is a type of measurable function $\clT$ from the space of $(n - m)$ planes, to the space of $0$ dimensional currents in $\bbR^n$ endowed with the {\it $H$-flat distance} (Definition~\ref{def:HFlatDistance}).
For example, an $m$-dimensional rectifiable current in $\bbR^n$ can be viewed as a scan by slicing it with $n - m$ planes (see \S\ref{sec: the hmass and the hflat distance}).

\paragraph{Partial Plateau's problem.}

\begin{figure}[b]
    \centering
    \includegraphics[width=.9\textwidth]{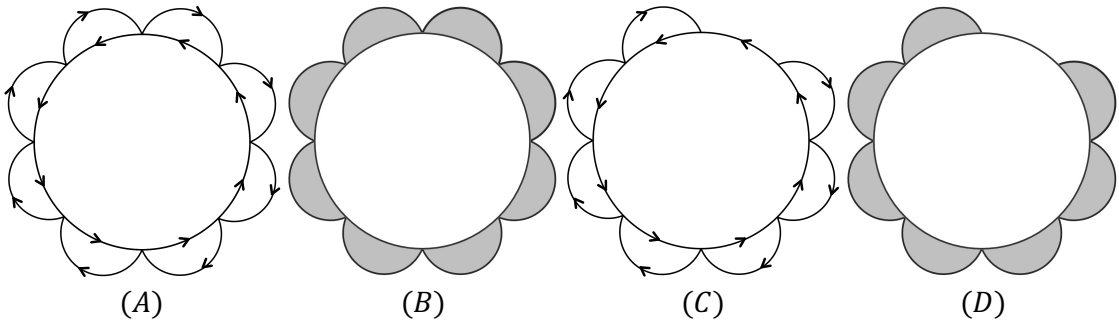}
    \caption{The solution to the standard Plateau's problem for the closed curve shown in $(A)$ is the surface shown in $(B)$. 
    When one of the outer arcs is removed as shown in $(C)$, the soap film shown in $(D)$ may still form. 
    Although it is no longer a solution to the standard Plateau's problem, it is a solution to partial Plateau's problem studied in the examples of \S\ref{section: PPPwithMass}.}
    \label{fig:sunflower}
\end{figure}

In the above Plateau-type problems, the boundaries of the surfaces are prescribed. 
In this article, we propose a new Plateau-type problem, where the boundary is not required to completely fulfill the prescribed one, but rather, is only required to satisfy a portion of it.
We call this {\it partial Plateau's problem}. 
To motivate it, let us consider the following scenario.
Suppose that we have an oriented wire that has the form of a sunflower as in Figure \ref{fig:sunflower}($A$). 
The solution to the standard oriented Plateau's problem with this boundary will be the union of all petals as in Figure \ref{fig:sunflower}($B$). 
Now, assume that one petal accidentally falls off the flower and the boundary wire becomes the one shown in Figure \ref{fig:sunflower}($C$). 
In this case, there is no solution to the corresponding Plateau's problem anymore since the resulting wire is no longer a closed curve. 
Nevertheless, the remaining wire still generates a soap film in the form of the remaining pedals as shown in Figure \ref{fig:sunflower}($D$).
This scenario (where the soap film only spans a portion of the given wire) motivates us to relax the boundary condition.
In Example~\ref{ex:1}, we investigate this example in detail, and show that the solution shown in Figure \ref{fig:sunflower}($D$) is given by a solution to our partial Plateau's problem. 

Another motivation for this problem comes from the study of ramified optimal transportation as given in \cite{xia2023ramified}.
More precisely, let $\mu $ and $\nu $ be two Radon measures on a convex compact subset $X$ of the Euclidean space $\mathbb{R}^n$, let $\mathbf{M}_{\alpha }$ be the $\alpha$-mass as defined in (\ref{eq: alphaMass}) for some $\alpha \in [0,1)$, and let $h$ be a continuous function on the support of the signed measure $\nu - \mu$.
Then, \cite{xia2023ramified} considered the following
resource allocation problem:
Minimize
\begin{equation}
\label{eqn: E}
\mathbf{E}_\alpha^h(T):=\mathbf{M}_{\alpha }(T)-\int_{X}h\,d(\partial T)
\end{equation}%
among all 1 dimensional real rectifiable currents $T$ with
$\partial T \preceq \nu - \mu$, in the sense that its Jordan decomposition $\partial T = \tilde{\nu} - \tilde{\mu}$ satisfies $\tilde{\mu} \leq \mu$ and $\tilde{\nu} \leq \nu$. 
In \cite{xia2023ramified}, this problem was motivated by considering the following example: Consider a firm that produces and sells a product in
various regions.
The measures $\mu $
and $\nu $ represent, respectively, the distributions of production
capacities and market sizes. 
The function $h$ represents the sale price on the support of $\nu$, and the production cost on the support of $\mu$.
The firm aims to maximize its profit (the negative of $\mathbf{E}_\alpha^h(T)$) defined as sale revenue $\int_X h\, d\tilde{\nu}$ minus the cost
involved in transportation $\bfM_\alpha(T)$ and production $\int_X h\,d\tilde{\mu}$.
Note that the boundary $\partial T$ is not required to equal $\nu -\mu $, but rather to be a portion of it. 
Similar kinds of optimal partial mass transportation problems have been studied for instance by Caffarelli and McCann \cite{caffarelli2010free} and also Figalli \cite{figalli2010optimal} for the scenario of Monge-Kantorovich problems with particular attention to the quadratic cost. 

\paragraph{Partial Plateau's problem with $H$-mass.}

The above Plateau-type problems either look to minimize functionals involving 
\begin{itemize}
\item the {\it mass}, or more generally, the {\it $H$-mass} over $m$-dimensional {\it rectifiable} currents with a {\it fixed} boundary condition, or
\item the {\it $\alpha$-mass} over {\it 1 dimensional real rectifiable} currents with a {\it partial} boundary condition. 
\end{itemize}
In this article, we investigate a Plateau type problem that minimizes a functional involving the {\it $H$-mass} over {\it $m$-dimensional rectifiable} currents with a {\it partial} boundary condition.
Precisely, let $H$ be a concave integrand, let $B$ be an $(m - 1)$-dimensional rectifiable current with finite $H$-mass, and let $\Phi$ be a smooth, compactly supported $(m - 1)$-differential form in $\bbR^n$.
We are interested in minimizing
\[
\bfE_{H, \Phi}(T) := \bfM_H(T) - \int_{\partial T} \Phi
\]
over all $m$-dimensional rectifiable currents $T$ in $\bbR^n$ with $\partial T \preceq B$. 
By definition, $\partial T \preceq B$ means that $\partial T$ is a {\it subcurrent}~\cite{paolini2006optimal} of $B$, in the sense that $\bfM(B) = \bfM(B - \partial T) + \bfM(\partial T)$. 
For the sake of intuition, it is worth mentioning that whenever $B$ is a real rectifiable current with finite mass, $\partial T \preceq B$ means that $\partial T$ is supported within the support of $B$, has the same orientation as $B$, but only has a fraction of its density~\cite[\text{Lemma 3.7}]{paolini2006optimal}. 

We have thus far formulated our Plateau type problems as minimization problems over rectifiable currents.
However, to obtain our main existence results (Theorem~\ref{thm: plateausProblemWithHMass}, Theorem~\ref{thm: mainthm}, and Corollary~\ref{cor: main}), we perform our minimization over a larger class of objects, called {\it $m$-dimensional scans with boundary} (see Definitions~\ref{defn: scan} and~\ref{def: boundary}).
Loosely speaking, these are {\it scans} (introduced in~\cite{de2003size}) with well-defined boundaries that are obtained as pointwise limits of rectifiable currents (when viewed as scans).
The reason that we minimize over scans with boundary is that when viewed as scans, the class of $m$-dimensional rectifiable currents is not closed under pointwise almost everywhere convergence.

\paragraph{Organization of the article.}
In \S\ref{sec: notationDefinitions}, we go over relevant definitions and notations that are by now standard in geometric measure theory literature. 
Then in \S\ref{section: PPPwithMass}, we look at {\it partial Plateau's problem with mass}: Given an $(m - 1)$-dimensional flat chain $B$ in $\bbR^n$ with finite mass, and a smooth, compactly supported $(m - 1)$-differential form $\Phi$ in $\bbR^n$, consider
\begin{equation}\label{eq: PPwithMass}
\clP_{(\bfE_\Phi,\, \preceq B,\, \bbR^n)}
    \begin{cases}
          \text{minimize } \bfE_\Phi(T) := \bfM(T) - \partial T(\Phi)\\
          \text{among } T \in \srR_m(\bbR^n) \text{ such that } \partial T \preceq B.
     \end{cases}
\end{equation}
In Theorem~\ref{thm: existenceE_Phi}, we prove the existence of this problem by using the standard compactness arguments of rectifiable currents.
Here, for any $\bfE_\Phi$ minimizing sequence $\{T_i\}$, we have uniform bounds for $\{\bfM(T_i) + \bfM(\partial T_i)\}$.
Although this is a special case of partial Plateau's problem with $H$-mass (studied in~\S\ref{sec: PPPwithHmass}) when $H(\theta) = \theta$, we do not have to minimize over $m$-dimensional scans in $\bbR^n$ as in the following problems that involve the general $H$-mass.

In \S\ref{sec: plateausProblemWithHmass}, we re-investigate {\it Plateau's problem with $H$-mass}, which was first investigated by De Pauw and Hardt in their seminal work~\cite{de2003size}.
Formulated over rectifiable currents, the problem is stated as follows: Given a rectifiable current $T_0 \in \srR_m(\bbR^n)$, 
\begin{equation}\label{eq: HPP-currents}
\begin{cases}
          \text{minimize } \bfM_H(T) \\
          \text{among } T \in \srR_m(\bbR^n) \text{ such that } \partial T = \partial T_0.
     \end{cases}
\end{equation}
To get an existence-type result~\cite[Theorem 3.5.2.]{de2003size}~for problem (\ref{eq: HPP-currents}), De Pauw and Hardt innovatively relax the notion of a rectifiable current to a rectifiable scan. 
In their proof of the theorem, they construct a sequence of $\bfM_H$ minimizing rectifiable currents that pointwise converge (as scans) to a rectifiable scan $\clT^\ast$. 
By Fateau's lemma, it is clear that $\bfM_H(\clT^\ast)$ is less than or equal to the infimum value of problem (\ref{eq: HPP-currents}). 
Nevertheless, since $\clT^\ast$ is not necessarily a rectifiable current, one cannot conclude that $\bfM_H(\clT^\ast)$ achieves the infimum value of problem (\ref{eq: HPP-currents}) as claimed in~\cite[Theorem 3.5.2.]{de2003size}.

To overcome this issue, in \S\ref{sec:4.3} we amend their existence result~\cite[Theorem 3.5.2.]{de2003size}~by reformulating problem (\ref{eq: HPP-currents}) with a modified version of the $H$-mass, denoted by $\newM_H$, and by minimizing $\newM_H$ over $Scan_m(\bbR^n)$, the class of {\it $m$-dimensional scans with boundary} (Definitions~\ref{defn: scan} and~\ref{def: boundary}).
In particular, we provide the following reformulation of {\it Plateau's problem with $H$-mass}: Given a scan $\clT_0 \in \scan_m(\bbR^n)$ with boundary,
\begin{equation}\label{eq: HPP-scans}
\clP_{(\newM_H,\, \partial \clT_0,\, \bbR^n)} 
    \begin{cases}
          \text{minimize } \newM_H(\clT) \\
          \text{among } \clT \in Scan_m(\bbR^n) \text{ such that } \partial\clT = \partial \clT_0.
     \end{cases}
\end{equation}
It is worth mentioning the following two facts about $Scan_m(\bbR^n)$. 
The class of $m$-dimensional scans with boundary is larger than the class of $m$-dimensional rectifiable currents, and any element in $Scan_m(\bbR^n)$ has a well-defined boundary, which is not necessarily the case for the general $m$-dimensional scans studied in~\cite{de2003size}. 
Under such modifications, in Theorem~\ref{thm: plateausProblemWithHMass} we are able to get the existence of an $\newM_H$ minimizing $m$-dimensional scan with fixed boundary. 

Given $B$, $\Phi$, and $H$ as above, and the additional constraint that $B$ is rectifiable and has finite $H$-mass, in \S\ref{sec: PPPwithHmass} we consider partial Plateau's problem with $H$-mass over rectifiable currents: 
\begin{equation}\label{problem: 4}
    \begin{cases}
        \text{minimize } \bfE_{\Phi,H}(T) := \bfM_H(T) - \partial T(\Phi)\\
          \text{among } T \in \srR_m(\bbR^n) \text{ such that } \partial T \preceq B.
     \end{cases}
\end{equation}
Unfortunately, as in Plateau's problem (\ref{eq: HPP-currents}) with $H$-mass, this problem does not always have a solution among rectifiable currents. 
To resolve this issue, we reformulate the problem by using our modified version $\newM_H$ of the $H$-mass on the class $Scan_m(\bbR^n)$, and investigate our main problem of interest, called {\it partial Plateau's problem with $H$-mass}:
\begin{equation}\label{eq:PPPwithHmass}
\clP_{(\bfE_{\Phi,H},\,\preceq B,\, \bbR^n)}
    \begin{cases}
          \text{minimize } \bfE_{\Phi,H}(\clT) := \newM_H(\clT) - \partial \clT(\Phi)\\
          \text{among } \clT \in Scan_m(\bbR^n) \text{ such that } \partial \clT \preceq \clS(B).
     \end{cases}
\end{equation}

Our main existence result, Theorem~\ref{thm: mainthm}, is proven within a slightly more general setting, and the existence of minimizer $\clT^\ast$ to problem (\ref{eq:PPPwithHmass}) is then given in Corollary~\ref{cor: main}.
Moreover, Corollary~\ref{cor: main} also shows us that
\[
\bfE_{\Phi,H}(\clT^*) = \inf\{\bfE_{\Phi,H}(T) : T \in \srR_m(\bbR^n), \partial T \preceq B\}.
\]
In other words, despite the fact that one can not expect to find an $\bfE_{H, \Phi}$ minimizer to problem (\ref{problem: 4}) among rectifiable currents, its infimum value is obtained by the corresponding $\bfE_{H, \Phi}$ minimizer of problem (\ref{eq:PPPwithHmass}), where we minimize over scans with boundary under the same partial boundary condition.

\section{Notation and preliminary definitions}\label{sec: notationDefinitions}

\begin{table}[htbp]\caption{Notation}
\centering 
\begin{tabular}{r c p{12cm} }
\toprule
$\bbN$ & $:=$ & \{1, 2, 3, \dots\}, the set of all natural numbers\\
$H$ & & A concave integrand \hfill (Def.~\ref{def:concaveIntegrand})\\
$\bfO^\ast(n, k)$ & & the space of all orthogonal projections of $\bbR^n$ onto $\bbR^k$ \\
$\boldsymbol{\theta}_{n, k}^\ast$ & &  the $\bfO(n)$ invariant measure on $\bfO^\ast(n, k)$ \\
$\clL^k$ & &  the Lebesgue measure on $\bbR^k$ \\
$\clH^k$ & &  the $k$ dimensional Hausdorff measure \\
\multicolumn{3}{c}{}\\
\multicolumn{3}{c}{\underline{Currents}}\\
\multicolumn{3}{c}{}\\
$\srD_m(\bbR^n)$ & & $m$-dimensional currents in $\bbR^n$ \\
$\srE_m(\bbR^n)$ & & compactly supported $m$-dimensional currents in $\bbR^n$\\
$\bfM(T)$ & & the mass of $T \in \srD_m(\bbR^n)$ \hfill (\ref{defn: mass})\\
$\srR_m(\bbR^n)$ & & $m$-dimensional rectifiable currents in $\bbR^n$ \\
$\bfF_K(T)$ & & flat norm of $T \in \srD_m(\bbR^n)$ \hfill (\ref{eq:flatNorm1})\\
$\bfI_{m}(\bbR^n)$ & & $m$-dimensional integral currents in $\bbR^n$ \hfill (\ref{eq: integralCurrentsIntegralFlatChains})\\
$\srF_m(\bbR^n)$ & & $m$-dimensional integral flat chains in $\bbR^n$ \hfill (\ref{eq: integralCurrentsIntegralFlatChains})\\
$\bfN_m(\bbR^n)$ & & $m$-dimensional normal currents in $\bbR^n$ \hfill (\ref{eq: normalCurrentsFlatChains})\\
$\bfF_m(\bbR^n)$ & & $m$-dimensional flat chains in $\bbR^n$ \hfill (\ref{eq: normalCurrentsFlatChains})\\
$\bfM_H(T)$ & & $H$-mass of $T \in \srD_m(\bbR^n)$ \hfill (Def.~\ref{def:Hmass})\\
$\srF_K^H(T)$ & & $H$-flat distance of $T \in \srD_m(\bbR^n)$ \hfill (Def.~\ref{def:HFlatDistance})\\
$A \preceq B$ & & $A$ is a subcurrent of $B$ for $A, B \in \srD_m(\bbR^n)$ \hfill (\ref{def:subcurrent})\\
\multicolumn{3}{c}{}\\
\multicolumn{3}{c}{\underline{Scans}}\\
\multicolumn{3}{c}{}\\
$\clS(T)$ & & the measurable map $(p, y) \mapsto \langle T, p, y\rangle$ associated to a flat chain $T$ \hfill~(\ref{eq: scan of a current})\\
$\bfM_H(f)$ & & the $H$-mass of a measurable map $f: \bfO^\ast(n, m) \times \bbR^m \to \bfI_{0, K}(\bbR^n)$ \hfill (\ref{eqn: M_H_scan})\\
$\clS_m(\bbR^n)$ & & scans obtained from rectifiable currents in $\bbR^n$ \hfill ~(\ref{eq: scansFromRectifiableCurrents})\\
$\scan_m(\bbR^n)$ & & $m$-dimensional scans in $\bbR^n$ with boundary \hfill (Def.~\ref{defn: scan} and~\ref{def: boundary})\\
$\newM_{H}(\clT)$ & & the lower-semicontinuous envelope of $\bfM_H$ over a.e.~pointwise convergence of $\{\clS(R) : R\in \srR_m(\bbR^n), \partial \clT  = \partial R\}$ \hfill (Def.~\ref{def: L_H})\\
\bottomrule
\end{tabular}
\label{tab:TableOfNotation}
\end{table}

In this article, most of our notations and definitions are consistent with that of Federer's geometric measure theory book~\cite{federer-1969-1} along with De Pauw and Hardt's article~\cite{de2003size}. 
For the convenience of the reader, we will now give a brief review of the standard concepts and results from geometric measure theory that will be important for us in this article.

Let $0 \leq m \leq n$.
We let $\srD^m(\bbR^n)$ denote the real vector space of smooth differential $m$-forms with compact support. 
The class $\srD_m(\bbR^n)$ is the vector space of all continuous real-valued linear functions on $\srD^m(\bbR^n)$. Elements of $\srD_m(\bbR^n)$ are called {\it $m$-dimensional currents} in $\bbR^n$, and the subspace $\srE_m(\bbR^n)$ of $\srD_m(\bbR^n)$ contains all $m$-dimensional currents with compact support.
The {\it mass} of a current $T \in \srD_m(\bbR^n)$ is defined as 
\begin{equation}\label{defn: mass}
\bfM(T) := \sup \{T(\phi) : \phi \in \srD^m(\bbR^n),\text{ with } \langle e_1\wedge\dots\wedge e_m, \phi(x)\rangle \leq 1,\  \forall x \in \bbR^n \text{ and } e_i \in \bbS^{n-1}\}.
\end{equation}
For $m \geq 1$, the {\it boundary} of a current $T \in \srD_m(\bbR^n)$ is the current $\partial T \in \srD_{m - 1}(\bbR^n)$ defined by $\partial T(\phi) := T(d\phi)$ for $\phi \in \srD^{m-1}(\bbR^n)$, and $\partial T$ is defined to be the zero-current whenever $T \in \srD_0(\bbR^n)$. 
For a sequence $\{T_i\}_{i=1}^\infty\subseteq \srD_m(\bbR^n)$ of $m$-dimensional currents and a current $T \in \srD_m(\bbR^n)$, we say $T_i$ {\it converges} (weakly) to $T$, written as $T_i \rightharpoonup T$, if
$T_i(\phi) \to T(\phi)$ for all $\phi \in \srD^m(\bbR^n)$.

The rectifiable currents of Federer and Fleming play an important role in this article. 
With $\clH^m$ denoting the $m$-dimensional Hausdorff measure on $\bbR^n$, a subset $M$ of $\bbR^n$ is called {\it $(\clH^m, m)$-rectifiable} if $\clH^m(M) <\infty$ and $\clH^m(M\setminus \cup_{i = 1}^\infty N_i) = 0$ for some finite or countable family $\{N_i\}_{i \in I}$ of $m$-dimensional $C^1$-submanifolds of $\bbR^n$. 
The following is an important characterization of the class $\srR_m(\bbR^n)$ of $m$-dimensional rectifiable currents.

Let $0 \leq m \leq n$. 
An {\it $m$-dimensional rectifiable current} $T \in \srR_m(\bbR^n)$ is given by the following three things:
\begin{enumerate}
\item an $\clH^m$-measurable and bounded $(\clH^m, m)$-rectifiable subset $M$ of $\bbR^n$, 
\item an $\clH^m$-measurable $m$-vectorfield $\xi : M \to \wedge_m\bbR^n$ such that for $\clH^m$-a.e.~$x \in M$, $\xi(x) = e_1\wedge\dots\wedge e_m$ for some orthonormal basis $\{e_i\}_{i = 1}^m$ of the approximate tangent space $\Tan(M, x)$,
\item and an $(\clH^m\myell M)$-summable function $\theta : M \to \bbN$ called the {\it multiplicity function}. 
\end{enumerate}

We will be denoting such a rectifiable current $T \in \srR_m(\bbR^n)$ by $\clH^m\myell M \wedge \theta\xi$ as in~\cite{federer-1969-1}.
Other common notation is also given by $\underline{\underline{\tau}}(M, \theta, \xi)$ as in \cite{simon-1984-lectures}, or by $\llbracket M, \xi, \theta\rrbracket$. 
Its action on $\phi \in \srD^m(U)$ is given by 
\[
(\clH^m\myell M \wedge \theta\xi)(\phi) = \int_{M} \langle \phi, \xi\rangle \theta\, d\clH^m,
\]
and its mass has the simple form
\begin{equation}\label{eq: massOfARectifiableCurrent}
\bfM(\clH^m\myell M \wedge \theta\xi) = \int_M \theta\, d\clH^m.
\end{equation}
\begin{remark}
Notice that the $m$-dimensional rectifiable currents we are considering all have finite mass and compact support.
\end{remark}

We will now give a brief overview of some special classes of currents. 
Whenever $K \subseteq \bbR^n$ is a {\it compact Lipschitz neighborhood retract} (CLNR)~\cite[4.1.29.]{federer-1969-1}, (e.g., compact convex subsets of $\bbR^n$), we have the identity
\begin{equation}\label{eq: compactRectifiableCurrents}
\srR_{m, K}(\bbR^n) = \srR_m(\bbR^n)\cap \{T : \spt(T) \subseteq K\}.
\end{equation}
When a subset $K$ is simply a compact subset of $\bbR^n$, $\srR_{m, K}(\bbR^n)$ has a more technical definition. 
However, since we will mainly be dealing with CLNRs of $\bbR^n$, we will forgo stating its definition here, and instead refer the reader to~\cite[4.1.24.]{federer-1969-1}.

Let $K$ be a compact subset of $\bbR^n$.
We define the following classes of currents 
\begin{itemize} 
\item $\bfI_{0, K}(\bbR^n) := \srR_{0, K}(\bbR^n)$, and $\bfI_{m, K}(\bbR^n) := \{T : T \in \srR_{m, K}(\bbR^n),\ \partial T \in \srR_{m - 1, K}(\bbR^n)\}$ for $m > 0$;
\item $\srF_{m, K}(\bbR^n) := \{R + \partial S : R \in \srR_{m, K}(\bbR^n),\ S \in \srR_{m + 1, K}(\bbR^n)\}$;
\item $\bfN_{m,K}(\bbR^n) := \{T\in \srD_m(\bbR^n) : \bfM(T) + \bfM(\partial T) <\infty \text{ with }\spt(T)\subseteq K\}$;
\item $\bfF_{m, K}(\bbR^n) :=$ the $\bfF_K$ closure of $\bfN_{m, K}(\bbR^n)$ in $\srD_m(\bbR^n)$, where $\bfF_K : \srD_m(\bbR^n) \to [0, \infty]$ is the {\it flat norm} defined as 
\begin{equation}\label{eq:flatNorm1}
\bfF_K(T) := \inf\{\bfM(T - \partial S) + \bfM(S) : S \in \srD_{m + 1}(\bbR^n)\text{ with } \spt(S)\subseteq K\}, \forall T\in \srD_m(\bbR^n).
\end{equation}
\end{itemize}

\begin{remark}\label{rem: 0dimIntegralCurrents}
Any $0$ dimensional integral current $T \in \bfI_{0, K}(\bbR^n)$ has a representation  
\[
T = \sum_{i = 1}^k a_i \boldsymbol{\delta}_{x_i} 
\]
where $a_1, \dots, a_k$ are positive integers, and $x_1, \dots, x_k \in K$. 
\end{remark}

The members of the abelian groups
\begin{align}
\bfI_m(\bbR^n) &:= \bigcup_{K\subseteq \bbR^n\text{ compact}} \bfI_{m, K}(\bbR^n),  &\srF_m(\bbR^n) := \bigcup_{K\subseteq \bbR^n\text{ compact}} \srF_{m, K}(\bbR^n),\label{eq: integralCurrentsIntegralFlatChains}\\
\bfN_m(\bbR^n) &:= \bigcup_{K\subseteq \bbR^n\text{ compact}} \bfN_{m, K}(\bbR^n),  &\bfF_m(\bbR^n) := \bigcup_{K\subseteq \bbR^n\text{ compact}} \bfF_{m, K}(\bbR^n)\label{eq: normalCurrentsFlatChains}
\end{align}
are called $m$-dimensional {\it integral currents, integral flat chains, normal currents}, and {\it flat chains} in $\bbR^n$, respectively.
One can show the following set-theoretic relations between the classes of currents that we have defined so far: 
\[
\begin{tikzcd}
\bfI_m(\bbR^n)
\arrow[d, phantom, sloped, "\subseteq"] \arrow[r, phantom, sloped, "\subseteq"] &\srR_m(\bbR^n) \arrow[d, phantom, sloped, "\subseteq"] \arrow[r, phantom, sloped, "\subseteq"]& \srF_m(\bbR^n) \arrow[d, phantom, sloped, "\subseteq"]\\
\bfN_m(\bbR^n) \arrow[r, phantom, sloped, "\subseteq"] & \{T\in\bfF_m(\bbR^n) : \bfM(T) <\infty\}\arrow[r, phantom, sloped, "\subseteq"]  &\bfF_m(\bbR^n)\arrow[r, phantom, sloped, "\subseteq"] & \srD_m(\bbR^n).
\end{tikzcd}
\]
\begin{remark}
The flat norm endows $\srD_m(\bbR^n)$ with a metric defined by $(T, T') \mapsto \bfF_K(T - T')$. 
By~\cite[4.2.18.]{federer-1969-1}, one can metrize integral flat chains $\srF_{m, K}(\bbR^n)$ with
\begin{equation}\label{eq:flatNorm3}
\srF_K(T) := \min\{\bfM(R) + \bfM(S) : R \in \srR_{m, K}(\bbR^n),\text{ and }\ S \in \srR_{m+1, K}(\bbR^n) \text{ with } T = R + \partial S\}.
\end{equation}
\end{remark}

\begin{remark}\label{rem: weakConvergenceAndFlatNorm}
For any sequence $\{T_i\}_{i = 1}^\infty \subseteq \srD_m(\bbR^n)$ of currents, and a current $T \in \srD_m(\bbR^n)$, if $\lim_{i\to \infty}\bfF_K(T_i - T) = 0$ for some compact subset $K$ of $\bbR^n$, then $T_i \rightharpoonup T$. 
Conversely, if $\{T_i\}_{i = 1}^\infty \subseteq \bfI_m(\bbR^n)$ is a sequence of integral currents with $\sup_i \bfM(T_i) + \bfM(\partial T_i) < \infty$, then $T_i \rightharpoonup T$ implies $\lim_{i \to \infty}\bfF_K(T_i - T) = 0$ for all compact subsets $K$ of $\bbR^n$. 
We refer the reader to Leon Simon's book on geometric measure theory~\cite[Theorem 31.2]{simon-1984-lectures} for a proof of this fact.
\end{remark}

\section{Partial Plateau's problem with mass}\label{section: PPPwithMass}
As in~\cite{paolini2006optimal}, for any two $k$ dimensional currents $A$ and $B$ of finite mass, we say that $A$ is a {\it subcurrent} of $B$, denoted by $A \preceq B$, if 
\begin{equation}\label{def:subcurrent}
\bfM(B) = \bfM(B - A) + \bfM(A).
\end{equation}
Notice that the zero-current $\boldsymbol{0}$ is always a subcurrent of $B$ and that $\bfM(A) \leq \bfM(B)$ whenever $A \preceq B$. In particular, $A\preceq {\bf 0}$ if and only if $A={\bf 0}$.

\begin{remark}\label{rem: subcurrent}
Let $B = \srH^m\myell M \wedge \theta \xi$ be a real-rectifiable current.
If $A$ is any $m$-dimensional subcurrent of $B$, Paolini and Stepanov~\cite[\text{Lemma 3.7}]{paolini2006optimal} showed that $A = \srH^m\myell M \wedge \lambda \theta\xi$ for some Borel function $\lambda : \bbR^n \to [0, 1]$. 
In particular, $A$ is a real-rectifiable current with $\spt(A) \subseteq \spt(B)$. 
\end{remark}

\begin{lem}\label{lemma:subcurrentandconvergence}
Let $B$ be a $k$ dimensional current and $\{A_i\}_{i = 1}^\infty$ be a sequence of $k$ dimensional currents such that $A_i \preceq B$ for all $i \in \bbN$. 
If $A$ is an $k$ dimensional current such that $A_i  \rightharpoonup A$, then $A \preceq B$. 
\end{lem}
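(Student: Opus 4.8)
The plan is to derive the required identity $\bfM(B) = \bfM(B - A) + \bfM(A)$ from two standard facts: the lower semicontinuity of the mass functional under weak convergence, and the superadditivity of $\liminf$ on sequences. First I would unpack the hypothesis. Since $A_i \preceq B$, the definition (\ref{def:subcurrent}) tells us that $B$ and each $A_i$ have finite mass, that
\[
\bfM(B) = \bfM(B - A_i) + \bfM(A_i) \qquad \text{for every } i,
\]
and in particular $\bfM(A_i) \le \bfM(B)$ for all $i$. Because weak convergence is linear, $A_i \rightharpoonup A$ also gives $B - A_i \rightharpoonup B - A$.

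Next I would invoke lower semicontinuity of mass under weak convergence: since $\bfM(T)$ is defined in (\ref{defn: mass}) as a supremum of the functionals $T \mapsto T(\phi)$, each of which is continuous under weak convergence, $\bfM$ is lower semicontinuous. Applying this to $A_i \rightharpoonup A$ and to $B - A_i \rightharpoonup B - A$ yields
\[
\bfM(A) \le \liminf_{i\to\infty} \bfM(A_i) \qquad\text{and}\qquad \bfM(B - A) \le \liminf_{i\to\infty}\bfM(B - A_i).
\]
In particular $\bfM(A) \le \bfM(B) < \infty$, so $A$ and $B - A$ both have finite mass and the relation $A \preceq B$ is meaningful.

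Then I would add these two inequalities using the superadditivity $\liminf x_i + \liminf y_i \le \liminf(x_i + y_i)$, taken with $x_i = \bfM(A_i)$ and $y_i = \bfM(B - A_i)$; crucially the sequence $x_i + y_i = \bfM(B)$ is constant in $i$, so
\[
\bfM(A) + \bfM(B - A) \le \liminf_{i\to\infty}\bigl( \bfM(A_i) + \bfM(B - A_i) \bigr) = \bfM(B).
\]
For the reverse inequality I would use subadditivity of mass: $\bfM(B) = \bfM\bigl((B - A) + A\bigr) \le \bfM(B - A) + \bfM(A)$. Combining the two gives $\bfM(B) = \bfM(B - A) + \bfM(A)$, i.e. $A \preceq B$.

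I do not expect a real obstacle here. The only points needing mild care are: ensuring every mass appearing is finite (which follows from $\bfM(A_i) \le \bfM(B) < \infty$ together with the lower semicontinuity bound on $\bfM(A)$), and getting the direction of the $\liminf$ inequality right — it is the superadditivity of $\liminf$ that is needed, and it delivers the desired inequality precisely because $\bfM(A_i) + \bfM(B - A_i)$ is a constant sequence.
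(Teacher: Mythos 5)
Your proposal is correct and follows essentially the same route as the paper's proof: lower semicontinuity of $\bfM$ under weak convergence applied to both $A_i \rightharpoonup A$ and $B - A_i \rightharpoonup B - A$, superadditivity of $\liminf$ on the constant sequence $\bfM(B-A_i)+\bfM(A_i)=\bfM(B)$, and the triangle inequality for the reverse direction. The extra remarks on finiteness are a harmless refinement of the same argument.
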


\begin{proof}
Since $\bfM$ satisfies the triangle inequality, it is sufficient to show that $\bfM(B - A) + \bfM(A) \leq \bfM(B)$.
By lower-semicontinuity of $\bfM$ with respect to weak convergence, 
\[\bfM(B-A) \leq \liminf_{i\to \infty} \bfM(B-A_i) \quad \text{ and }\quad \bfM(A) \leq \liminf_{i\to \infty} \bfM(A_i).\]
Therefore since $A_i \preceq B$ for all $i \in \bbN$,
\begin{align*}
\bfM(B - A) + \bfM(A) &\leq \liminf_{i \to \infty}  \bfM(B-A_i)+\liminf_{i \to \infty}  \bfM(A_i)\\
 &\leq \liminf_{i \to \infty} \left(\bfM(B - A_i) + \bfM(A_i)\right)\leq \bfM(B).
\end{align*}
\end{proof}

\begin{remark}\label{rem: massLscWithFlatConvergence}
We mention that Lemma~\ref{lemma:subcurrentandconvergence} holds true if we replace $A_i \rightharpoonup A$ by $\bfF_K(A_i-A)\rightarrow 0$ for some compact subset $K$ of $\bbR^n$.
This is because by Remark~\ref{rem: weakConvergenceAndFlatNorm}, $\bfF_K(A_i-A)\rightarrow 0$ implies $A_i \rightharpoonup A$.
\end{remark}

We are interested in what we call, {\it partial Plateau's problem}: 
Given a flat chain $B \in \bfF_{m-1}(\bbR^n)$ with mass $\bfM(B) <\infty$, a smooth compactly supported form $\Phi \in \srD^{m - 1}(\bbR^n)$, and a rectifiable current $T_0 \in \srR_m(\bbR^n)$, consider 
\[
\clP_{(\bfE_{\Phi},  \preceq B,  T_0, \bbR^n)}
    \begin{cases}
          \text{minimize } \bfE_{\Phi}(T) \\
          \text{among } T \in \srR_m(\bbR^n) \text{ such that } \partial (T-T_0) \preceq B,
     \end{cases}
\]
where  
\begin{equation}
\label{eq: E_Phi}
    \bfE_{ \Phi}(T) := \bfM(T) - \partial T(\Phi).
\end{equation}

\begin{remark}
Note the following: 
\begin{itemize}
    \item When $B={\bf 0}$, $\partial(T - T_0) \preceq \boldsymbol{0}$ implies $\partial T = \partial T_0$. 
    Thus, up to a constant, this problem corresponds to the standard Plateau's problem of rectifiable currents: Minimize $\bfE_{\Phi}(T):=\bfM(T) - \partial T_0(\Phi)$ among all $T \in \srR_m(\bbR^n)$ with $\partial T=\partial T_0$. 
    \item The introduction of $T_0$ makes this a slightly more general problem than $\clP_{(\bfE_\Phi,\, \preceq B,\, \bbR^n)}$, given in~(\ref{eq: PPwithMass}). 
    When $T_0={\bf 0}$, the boundary of $T$ is a subcurrent of $B$.
    \item We require $\bfM(B)< \infty$ but do not require $\bfM(\partial T_0)< \infty$.
    Thus for any admissible $T$, the mass of its boundary $\bfM(\partial T)$ may be unbounded.
\end{itemize}
\end{remark}

We now state the existence result of the partial Plateau's problem.
\begin{thm}\label{thm: existenceE_Phi}
Let $\Phi \in \srD^{m - 1}(\bbR^n)$ and $B \in \bfF_{m - 1}(\bbR^n)$ with $\bfM(B) < \infty$. 
Given $T_0 \in \srR_m(\bbR^n)$,
there exists a rectifiable current $T^\ast \in \srR_m(\bbR^n)$ with $\partial (T^\ast- T_0)\preceq B$ such that 
\[
\bfE_{\Phi}(T^\ast) = \min \{\bfE_{\Phi}(T): T \in \srR_m(\bbR^n),
\partial (T-T_0)\preceq B\}.
\]
\end{thm}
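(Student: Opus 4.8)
The plan is to run the direct method of the calculus of variations. First I would check that
$\ell:=\inf\{\bfE_{\Phi}(T):T\in\srR_m(\bbR^n),\ \partial(T-T_0)\preceq B\}$ is a finite real number. Since $\partial(T-T_0)\preceq B$ forces $\bfM(\partial(T-T_0))\le\bfM(B)<\infty$, writing $\partial T(\Phi)=\partial(T-T_0)(\Phi)+\partial T_0(\Phi)=\partial(T-T_0)(\Phi)+T_0(d\Phi)$ and estimating $|\partial(T-T_0)(\Phi)|\le\bfM(B)\sup_x\|\Phi(x)\|$ and $|T_0(d\Phi)|\le\bfM(T_0)\sup_x\|d\Phi(x)\|$ gives $\bfE_{\Phi}(T)=\bfM(T)-\partial T(\Phi)\ge -C$ for a constant $C=C(B,\Phi,T_0)$; and $T_0$ is itself admissible, so $\ell\le\bfE_{\Phi}(T_0)<\infty$. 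Fixing a minimizing sequence $\{T_i\}$, the same estimate gives $\bfM(T_i)=\bfE_{\Phi}(T_i)+\partial T_i(\Phi)\le\bfE_{\Phi}(T_i)+C$, so $\sup_i\bfM(T_i)<\infty$.

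The crucial preliminary step is to pass to a minimizing sequence with uniformly bounded supports. Here I would use that $\partial(T_i-T_0)\preceq B$ implies, via subadditivity of the mass measure together with equality of total masses, $\|\partial(T_i-T_0)\|\le\|B\|$ as measures, hence $\spt(\partial(T_i-T_0))\subseteq\spt(B)$; also $\spt(\partial T_0)\subseteq\spt(T_0)$. Fix a closed ball $X$ whose interior contains $\spt(B)\cup\spt(T_0)$ and let $\pi\colon\bbR^n\to X$ be the $1$-Lipschitz nearest-point projection. Then $\pi_\#T_i\in\srR_m(\bbR^n)$ is supported in $X$ with $\bfM(\pi_\#T_i)\le\bfM(T_i)$, and since $\pi$ is the identity on a neighborhood of $\spt(\partial T_i)\subseteq\spt(\partial(T_i-T_0))\cup\spt(\partial T_0)\subseteq X$, we get $\partial(\pi_\#T_i)=\pi_\#\partial T_i=\partial T_i$. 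Consequently $\partial(\pi_\#T_i-T_0)=\partial(T_i-T_0)\preceq B$ and $\bfE_{\Phi}(\pi_\#T_i)=\bfM(\pi_\#T_i)-\partial T_i(\Phi)\le\bfE_{\Phi}(T_i)$, so $\{\pi_\#T_i\}$ is again an admissible minimizing sequence, now supported in the fixed compact set $X$; after relabeling I would assume $\spt(T_i)\subseteq X$.

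Next, set $S_i:=T_i-T_0\in\srR_m(\bbR^n)$, which is supported in $X$, has $\bfM(S_i)\le\bfM(T_i)+\bfM(T_0)$ bounded and $\bfM(\partial S_i)\le\bfM(B)<\infty$; by the boundary rectifiability theorem $S_i\in\bfI_{m,X}(\bbR^n)$ with $\sup_i[\bfM(S_i)+\bfM(\partial S_i)]<\infty$. Federer and Fleming's compactness theorem then yields a subsequence with $\bfF_X(S_i-S^\ast)\to 0$ for some $S^\ast\in\bfI_{m,X}(\bbR^n)$, whence $S_i\rightharpoonup S^\ast$ and $\partial S_i\rightharpoonup\partial S^\ast$. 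Put $T^\ast:=S^\ast+T_0\in\srR_m(\bbR^n)$, so $T_i\rightharpoonup T^\ast$. Since each $\partial S_i=\partial(T_i-T_0)\preceq B$, Lemma~\ref{lemma:subcurrentandconvergence} gives $\partial(T^\ast-T_0)=\partial S^\ast\preceq B$, i.e. $T^\ast$ is admissible. Finally, lower semicontinuity of mass under weak convergence gives $\bfM(T^\ast)\le\liminf_i\bfM(T_i)$, and weak convergence gives $\partial T^\ast(\Phi)=T^\ast(d\Phi)=\lim_iT_i(d\Phi)=\lim_i\partial T_i(\Phi)$; hence $\bfE_{\Phi}(T^\ast)\le\liminf_i\bfE_{\Phi}(T_i)=\ell$, and combined with admissibility this forces $\bfE_{\Phi}(T^\ast)=\ell=\min$.

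The main obstacle is precisely the bounded-support reduction in the second step. Because $\bfM(\partial T_0)$ may be infinite, $\bfM(\partial T_i)$ need not be uniformly bounded, so Federer--Fleming cannot be applied to $\{T_i\}$ directly, only to the translates $\{T_i-T_0\}$, whose boundary masses are controlled by $\bfM(B)$; and to apply compactness there one must first confine the $T_i$ to a common compact set without enlarging $\bfE_{\Phi}$ or spoiling the constraint $\partial(T-T_0)\preceq B$. The reason the nearest-point projection does exactly this is that all the data governing the constraint and the linear term---namely $B$ and $\partial T_0$---already lie in a fixed compact set, so projecting leaves $\partial T_i$ (hence the constraint and the number $\partial T_i(\Phi)$) untouched while not increasing mass.
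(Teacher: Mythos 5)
Your proposal is correct and follows essentially the same route as the paper's proof: project a minimizing sequence onto a fixed compact convex set containing $\spt(B)\cup\spt(T_0)$ (which leaves $\partial T_i$ and hence the constraint and the linear term unchanged while not increasing mass), derive the uniform bound on $\bfM(T_i-T_0)+\bfM(\partial(T_i-T_0))$, apply Federer--Fleming compactness to $T_i-T_0$, and conclude via Lemma~\ref{lemma:subcurrentandconvergence} and lower semicontinuity of $\bfM$. Your explicit measure-theoretic justification that $\|\partial(T_i-T_0)\|\le\|B\|$ for a general finite-mass flat chain $B$ is, if anything, slightly more careful than the paper's citation of Remark~\ref{rem: subcurrent}, which is stated only for rectifiable $B$.
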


\begin{proof}
Notice that, since the zero-current ${\bf 0}\preceq B$, this class contains $T_0$. 
By definition, both $T_0$ and $B$ have compact support.
Let $K$ be a compact convex subset of $\bbR^n$ containing $\spt(T_0)\cup \spt(B)$ and let $\pi_K : \bbR^n \to \bbR^n$ denote the nearest point projection onto $K$~\cite[4.1.15]{federer-1969-1}.

For any $T \in \srR_m(\bbR^n)$ with $\partial(T - T_0) \preceq B$, it holds that $\bfE_\Phi((\pi_K)_\# T) \leq \bfE_\Phi(T)$. 
Indeed, observe that $\spt(\partial T) \subseteq K$ because $\partial T = \partial(T - T_0) + \partial T_0$, $\spt(\partial(T - T_0)) \subseteq \spt(B)$ (see Remark~\ref{rem: subcurrent}), and $\spt(T_0)\cup \spt(B) \subseteq K$.  
Thus, $\partial \left((\pi_K)_\# T\right)=(\pi_K)_\#\partial T=\partial T$. Since also
 $\bfM((\pi_K)_\# T) \leq \bfM(T)$, it follows that
\begin{equation}
\label{eqn: pi_K}
\bfE_\Phi((\pi_K)_\# T)=\bfM((\pi_K)_\# T)-\partial ((\pi_K)_\# T)(\Phi)\le\bfM(T)-\partial T(\Phi)=\bfE_\Phi(T).
\end{equation} 
Let $\{T_i\}_{i = 1}^\infty\subseteq \srR_m(\bbR^n)$ be an $\bfE_{\Phi}$-minimizing sequence with $\partial (T_i-T_0)\preceq B$.  
Thus, for each $i$,
\[\bfM(\partial (T_i-T_0))\leq\bfM(B)<\infty.\]  By (\ref{eqn: pi_K}), without loss of generality we may assume $\spt(T_i) \subseteq K$ and $\bfE_{\Phi}(T_i)\le \bfE_{\Phi}(T_0)$. Then,
\[
\bfM(T_i) = \bfE_{\Phi}(T_i) + \partial T_i (\Phi)\leq \bfE_{\Phi}(T_0)+ \partial T_i (\Phi)=\bfM(T_0)+ \partial (T_i-T_0) (\Phi)\leq \bfM(T_0)+  \bfM(B)||\Phi||,
\]
where $||\Phi||$ is the {\it comass} norm of $\Phi$, and 
\[
\bfM(T_i-T_0) \leq \bfM(T_i)+\bfM(T_0)\leq 2\bfM(T_0)+  \bfM(B)||\Phi||<\infty.
\]
Thus, $\{T_i - T_0\} \subseteq \bfI_{m, K}(\bbR^n) \cap \{T : \bfN(T) < c\}$ for some large enough $c > 0$. 
By the Federer and Fleming's compactness theorem for integral currents~\cite[4.2.17(b)]{federer-1969-1}, a subsequence of $\{T_i-T_0\}$, still denoted by $\{T_i - T_0\}$, $\srF_K$ converges to some $R\in \bfI_{m, K}(\bbR^n)$. 
Thus, $T_i - T_0 \rightharpoonup R$ and $T_i \rightharpoonup T^\ast := R + T_0 \in \srR_m(\bbR^n)$.
Since $\partial (T_i-T_0)\preceq B$, and $\partial(T_i - T_0) \rightharpoonup \partial R = \partial (T^\ast - T_0)$, by Lemma \ref{lemma:subcurrentandconvergence}, $\partial (T^\ast-T_0)\preceq B$ as well. 
Also, by the lower-semicontinuity of $\bfM$,
\[\bfE_{\Phi}(T^\ast)=\bfM(T^\ast)-\partial T^\ast(\Phi)\leq \liminf_{i\rightarrow \infty}\bfM(T_i)-\partial T_i(\Phi)=\liminf_{i\rightarrow \infty}\bfE_{\Phi}(T_i).\]
Since $\{T_i\}$ is an $\bfE_{\Phi}$-minimizing sequence, we have $T^\ast$ is the desired $\bfE_{\Phi}$-minimizer.
\end{proof}

\begin{ex}\label{ex:1}
We now investigate the sunflower example as shown in Figure~\ref{fig:sunflower} with more detail. 
Let $B$ be the 1 dimensional rectifiable current as shown in Figure~\ref{fig:flowerdata}(A) with density 2 on the inner circle, and density 1 on the outer circular arcs.
Let $\llbracket P_i\rrbracket$ (for $i = 1, \dots, 8$) and $\llbracket D\rrbracket$ denote the 2 dimensional rectifiable currents with clockwise orientation which respectively represent the petals and the central disk of the sunflower, as shown in Figure~\ref{fig:flowerdata}(B).
Then any $T \in \srR_2(\bbR^2)$ with $\partial T \preceq B$ has the form
\begin{equation}\label{eq: candidate}
T = a \llbracket D\rrbracket + \sum_{i = 1}^8 c_i \llbracket P_i\rrbracket 
\end{equation}
where $a$ and $c_i$ are integers such that 
\begin{equation}\label{eq: example-inequality}
0 \leq c_i \leq 1 \quad\text{ and } \quad 0 \leq c_i - a \leq 2\quad \text{ for each } i = 1, 2, \dots, 8.
\end{equation}
\begin{figure}[h]
    \centering
    \includegraphics[width=.75\textwidth]{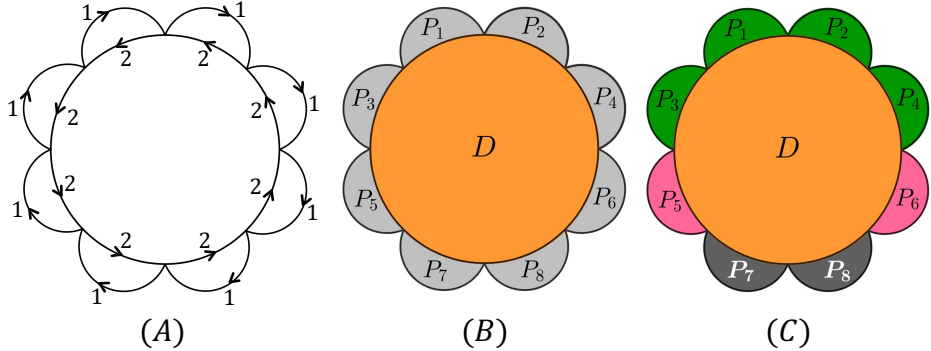}
    \caption{The boundary $B$ is shown in $(A)$ along with its orientation and density. 
    The petals $\{P_i\}_{i = 1}^8$ and central disk $D$ shown in $(B)$ have a clockwise orientation. 
    When $\Phi$ is given, the petals are partitioned into the three families shown in (C): negative petals (shown in green), neutral petals (shown in pink), and positive petals (shown in gray).}
    \label{fig:flowerdata}
\end{figure}

If $\Phi = 0$, then clearly we get that the $0$-current $\boldsymbol{0}$ is the unique minimizer. 
For any smooth, compactly supported $1$-form $\Phi \in \srD^1(\bbR^2)$, 
\begin{align*}
\bfE_\Phi(T) &= \bfM(T) - \partial T(\Phi)\\
&= |a|\bfM(\llbracket D\rrbracket) + \sum_{i = 1}^8 |c_i| \bfM(\llbracket P_i\rrbracket) - \left(a\partial \llbracket D\rrbracket(\Phi) + \sum_{i = 1}^8 c_i\partial \llbracket P_i\rrbracket(\Phi) \right)\\
&= |a|\bfM(\llbracket D\rrbracket) - a\partial \llbracket D\rrbracket(\Phi) + \sum_{i = 1}^8 c_i( \bfM(\llbracket P_i\rrbracket) - \partial \llbracket P_i\rrbracket(\Phi)).
\end{align*}
We now investigate the possible solutions for $\clP_{(\bfE_\Phi, \preceq B, \boldsymbol{0}, \bbR^2)}$. 
First, partition the collection $\{\llbracket P_i\rrbracket \}$ of petals into the following sets
\begin{align*}
\clP_- &= \{\llbracket P_i\rrbracket : \bfM(\llbracket P_i \rrbracket) - \partial \llbracket P_i \rrbracket (\Phi) < 0\} \text{ of negative petals,}\\
\clP_0 &= \{\llbracket P_i\rrbracket : \bfM(\llbracket P_i \rrbracket) - \partial \llbracket P_i \rrbracket (\Phi) = 0\} \text{ of neutral petals, and}\\
\clP_+ &= \{\llbracket P_i \rrbracket: \bfM(\llbracket P_i \rrbracket) - \partial \llbracket P_i \rrbracket (\Phi) > 0\} \text{ of positive petals}.
\end{align*}
In Figure~\ref{fig:flowerdata}(C), the set of negative petals $\{\llbracket P_1\rrbracket, \llbracket P_2\rrbracket, \llbracket P_3\rrbracket, \llbracket P_4\rrbracket\}$ are colored green, the neutral petals $\{\llbracket P_5\rrbracket, \llbracket P_6\rrbracket\}$ are colored pink, and the positive petals $\{\llbracket P_7\rrbracket, \llbracket P_8\rrbracket\}$ are colored gray.

Any  possible solution $T$ for $\clP_{(\bfE_\Phi, \preceq B, \boldsymbol{0}, \bbR^2)}$ must have the form described in (\ref{eq: candidate}). 
From inequalities (\ref{eq: example-inequality}), we get that $a \in \{-2, -1, 0, 1\}$. 
We describe the possible solutions: 
\begin{itemize}
\item When $a=-2$, all the $c_i$'s must be $0$. 
In this case, the only possible solution is
\[
T_{-2}=-2\llbracket D \rrbracket
\]
as illustrated in Figure~\ref{fig:sunflower2}(A). 

\item When $a = -1$ all $c_i$ are allowed to be  either $0$ or $1$ for any admissible candidate. 
However, for a solution $T_{-1}$ with $a = -1$, the coefficients $c_i$ must be $1$ on the negative petals and $0$ on the positive petals.  
On the neutral petals, $c_i$ may be chosen to be either $0$ or $1$. 
This is because removing any positive petal or adding any negative petal from a candidate minimizer can only decrease $\bfE_{\Phi}$, while removing or adding a neutral petal does not change the value of $\bfE_{\Phi}$. 
That is, any possible solution must be in the form 
\[
T_{-1} = -\llbracket D\rrbracket + \sum_{P_i \in \clP_-} \llbracket P_i\rrbracket + \sum_{P_i \in \clP_0^\ast} \llbracket P_i\rrbracket \quad \text{ for any } \clP_0^\ast \subseteq \clP_0,
\] 
as illustrated in Figure~\ref{fig:sunflower2}(B).

\item When $a = 0$, for the same reason as in the case of $a = -1$, any possible solution must be in the form 
\[
T_0 = \sum_{P_i \in \clP_-} \llbracket P_i\rrbracket + \sum_{P_i \in \tilde{\clP_0}} \llbracket P_i\rrbracket \quad \text{ for any } \tilde{\clP_0} \subseteq \clP_0,
\]
as illustrated in Figure~\ref{fig:sunflower2}(C).

\item When $a = 1$, $c_i = 1$ for all $i = 1, \dots, 8$. 
Hence, the only possible solution is 
\[
T_1 = \llbracket D\rrbracket + \sum_{i = 1}^8 \llbracket P_i \rrbracket, 
\]
as illustrated in Figure~\ref{fig:sunflower2}(D).
\end{itemize}
\begin{figure}[h]
    \centering
    \includegraphics[width=.8\textwidth]{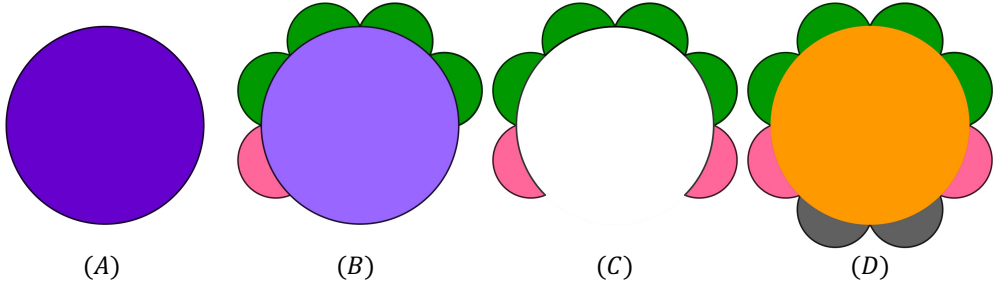}
    \caption{From left to right, we show the four possible solutions $T_{-2}, T_{-1}, T_0, T_1$ for $\clP_{(\bfE_\Phi, \preceq B, \boldsymbol{0}, \bbR^2)}$ with boundary $B$ and the 1-form $\Phi$ as shown in Figure~\ref{fig:flowerdata}.}
    \label{fig:sunflower2}
\end{figure}
So far, we have described the possible solutions of $\clP_{(\bfE_\Phi, \preceq B, \boldsymbol{0}, \bbR^2)}$ by only taking into account the subcurrent restrictions on their boundaries and the way that the negative, neutral, and positive petals change the value of $\bfE_\Phi$. 
We will now characterize the solutions for $\clP_{(\bfE_\Phi, \preceq B, \boldsymbol{0}, \bbR^2)}$ in terms of how $\partial \llbracket D\rrbracket$ acts on $\Phi$. 
First note the following values of $\bfE_\Phi(T_i)$ for $i = -2, -1, 0, 1$:
\begin{align*}
&\bfE_\Phi(T_{-2}) = 2\bfM(\llbracket D\rrbracket) + 2\partial \llbracket D\rrbracket(\Phi)\\
&\bfE_\Phi(T_{-1}) = \bfM(\llbracket D\rrbracket) + \partial \llbracket D\rrbracket (\Phi) + \sum_{P_i \in \clP_-}( \bfM(\llbracket P_i\rrbracket) - \partial \llbracket P_i\rrbracket(\Phi))\\
&\bfE_\Phi(T_{0}) = \sum_{P_i \in \clP_-} (\bfM(\llbracket P_i\rrbracket) - \partial \llbracket P_i\rrbracket (\Phi))\\
&\bfE_\Phi(T_{1}) = \bfM(\llbracket D\rrbracket) - \partial \llbracket D\rrbracket (\Phi) + \sum_{P_i \in \clP_-}( \bfM(\llbracket P_i\rrbracket) - \partial \llbracket P_i\rrbracket(\Phi)) + \sum_{P_i \in \clP_+} ( \bfM(\llbracket P_i\rrbracket) - \partial \llbracket P_i\rrbracket(\Phi)).
\end{align*}
Thus,
\begin{itemize}
\item $\bfE_\Phi(T_{-2})\le \bfE_\Phi(T_{-1})$ whenever 
\[
\partial \llbracket D\rrbracket(\Phi) \leq  
 -\bfM(\llbracket D\rrbracket) + \sum_{P_i \in \clP_-}( \bfM(\llbracket P_i\rrbracket) - \partial \llbracket P_i\rrbracket(\Phi)) =: \lambda_{-2},
 \] 
 \item $\bfE_\Phi(T_{-1})\le  \bfE_\Phi(T_0) $ whenever 
 \[ \partial \llbracket D\rrbracket(\Phi) \leq -\bfM(\llbracket D\rrbracket) =: \lambda_{-1}, \text{ and }\]
 \item $\bfE_\Phi(T_0) \le \bfE_\Phi(T_1) $ whenever 
 \[\partial \llbracket D\rrbracket(\Phi) \leq \bfM(\llbracket D\rrbracket) + \sum_{P_i \in \clP_+} \left(\bfM(\llbracket P_i\rrbracket) - \partial \llbracket P_i\rrbracket (\Phi)\right) =:  \lambda_0\]
 \end{itemize}
Notice also that we have the following inequalities 
\[\lambda_{-2}\le \lambda_{-1}\le \lambda_{0}.\]
As a result, 
\begin{itemize}
\item $T_{-2}$ is a solution when $ \partial \llbracket D\rrbracket(\Phi) \leq 
 \lambda_{-2}$, 
 \item $T_{-1}$ is a solution when $ \partial \llbracket D\rrbracket(\Phi) \in [ \lambda_{-2}, \lambda_{-1}]
 $, 
 \item $T_{0}$ is a solution when $ \partial \llbracket D\rrbracket(\Phi) \in [ \lambda_{-1}, \lambda_{0}]$, and
 \item $T_{1}$ is a solution when $ \partial \llbracket D\rrbracket(\Phi) \geq
 \lambda_0$.
 \end{itemize}
 In addition, the solution is unique whenever $\partial\llbracket D \rrbracket (\Phi) \notin \{\lambda_{-2}, \lambda_{-1}, \lambda_0, \lambda_1\}$.
\end{ex}

\begin{figure}[b]
    \centering
    \includegraphics[width=0.9\textwidth]{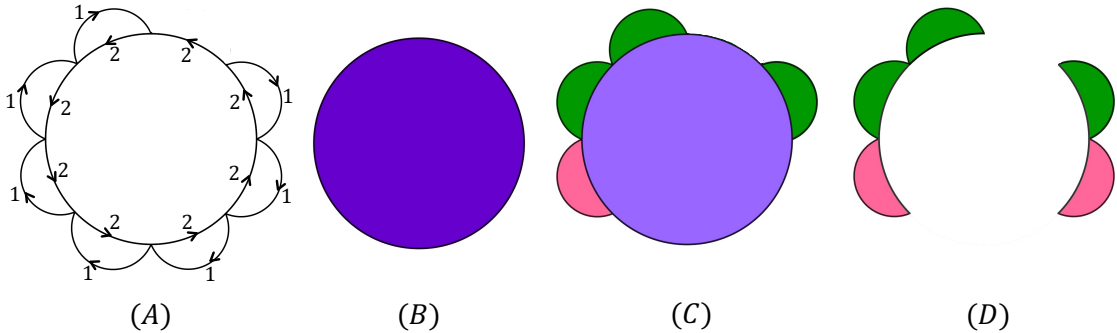}
    \caption{Given the boundary $\tilde{B}$ as shown in $(A)$, the three possible solutions $T_{-2}, T_{-1}, T_0$ for $\clP_{(\bfE_\Phi, \preceq \tilde{B}, \boldsymbol{0}, \bbR^2)}$ are shown in $(B)$, $(C)$, and $(D)$, respectively.}
    \label{fig:sunflower4}
\end{figure}

\begin{ex}\label{ex:2}
Let $\tilde{B}$ be the $1$ dimensional rectifiable current shown in Figure~\ref{fig:sunflower4}(A).
As in Example \ref{ex:1}, $T_{-2}$ is a solution when $\partial \llbracket D\rrbracket (\Phi) \leq \lambda_{-2}$,  $T_{-1}$ is a solution whenever $\partial \llbracket D\rrbracket (\Phi) \in [\lambda_{-2}, \lambda_{-1}]$, and $T_0$ is a solution when $\partial \llbracket D\rrbracket (\Phi) \geq \lambda_{-1}$.
However, $T_1$ cannot be a solution for any $\Phi$ since its boundary $\partial T_1$ is no longer a subcurrent of $\tilde{B}$. 
\end{ex}

\section{Plateau's problem with $H$-mass for scans with boundary}\label{sec: plateausProblemWithHmass}
Building upon the work of De Pauw and Hardt~\cite{de2003size}, in this section, we further investigate Plateau's problem with $H$-mass. 
We define a subfamily of measurable maps from the space of all $n - m$ planes in $\bbR^n$ to the space of 0 dimensional integral currents, called {\it scans with boundary} and modify the definition of the $H$-mass to prove the existence of a solution of Plateau's problem with $H$-mass for scans with boundary in Theorem~\ref{thm: plateausProblemWithHMass}. 
The definitions and techniques from this section will also be used when we study {\it partial} Plateau's problem in~\S\ref{sec: PPPwithHmass}.

\subsection{The $H$-mass and the $H$-flat distance}\label{sec: the hmass and the hflat distance}

We now list a few concepts and results from \cite{de2003size} that are particularly useful for the analysis in this article. 
We start by defining the $H$-mass of real rectifiable currents with finite size and by defining the $H$-flat distance on integral flat chains.

\begin{definition}[{\cite[Definition 3.2.1.]{de2003size}}]\label{def:concaveIntegrand}
A function $H : [0, \infty) \to [0, \infty)$ is called a concave integrand if it satisfies the following conditions:
\begin{enumerate}
\item $H(0) = 0$;
\item $H(1) = 1$;
\item $H(\theta_1) < H(\theta_2)$ whenever $0 \leq \theta_1 < \theta_2$;
\item $H(\theta_1 + \theta_2) \leq H(\theta_1) + H(\theta_2)$ whenever $\theta_1, \theta_2 \geq 0$;
\item $\lim_{\theta \to \infty} H(\theta) = \infty$. 
\end{enumerate}
\end{definition}

\begin{definition}[{\cite[Definition 3.2.2.]{de2003size}}]\label{def:Hmass}
For a concave integrand $H$ and a real rectifiable current $T \in \srD_m(\bbR^n)$ with $\bfS(T) < \infty$, the $H$-mass of $T$ is defined as
\begin{equation}\label{def:HMass}
\bfM_H(T) := \int_{\bbR^n} H(\Theta^m(\|T\|, x))\, d\clH^m(x).
\end{equation}
\end{definition}

\begin{definition}[{\cite[Definition 3.2.8.]{de2003size}}]\label{def:HFlatDistance}
For a compact subset $K$ of $\bbR^n$, a concave integrand $H$, and an integral flat chain $T \in \srF_{m, K}(\bbR^n)$ we define
\begin{equation}\label{eq:HFlatDistance}
\srF^H_K(T) := \inf\{\bfM_H(T - \partial S) + \bfM_H(S) : S \in \srR_{m + 1, K}(\bbR^n),\ T - \partial S \in \srR_{m, K}(\bbR^n)\}.
\end{equation}
\end{definition}

As in~\cite[1.7.4, 2.7.16]{federer-1969-1}, let $\bfO(n)$ be the orthogonal group of $\bbR^n$ and $\bfO^\ast(n, k)$ be the space of all orthogonal projections of $\bbR^n$ to $\bbR^k$ with its $\bfO(n)$ invariant measure $\theta^\ast_{n, k}$.
In {\cite[Proposition 3.1.3.~and 3.1.5.]{de2003size}}, it shows that any flat chain $T \in \bfF_{m, K}(\bbR^n)$ and $k \in \{1, \dots, m\}$ can be represented by the $\boldsymbol{\theta}^\ast_{n, k}\times \clL^k$ measurable map 
\begin{equation}\label{eq: scan of a current}
\clS(T) : \bfO^\ast(n, k)\times \bbR^k \to \bfF_{m - k, K}(\bbR^n), \quad \clS(T)(p, y) := \langle T, p, y\rangle,
\end{equation}
defined for every $p \in \bfO^\ast(n, k)$ and for $\clL^k$ almost every $y \in \bbR^k$.
The $k$ dimensional flat chain $\langle T, p, y\rangle$ is obtained by slicing $T$ with $p^{-1}(y)$ under the orthogonal projection map $p$ for $y \in \bbR^m$.
For the remainder of the paper, the map $\clS(T)$ is to be defined as in~(\ref{eq: scan of a current}) for $k = m$.

Let $T\in \srR_{m, K}(\bbR^n)$.
For $\clL^k$ almost every $y \in \bbR^m$, $\langle T, p, y\rangle \in \bfI_{0, K}(\bbR^n)$~\cite[4.3.6.]{federer-1969-1}, and hence $\clS(T): \bfO^\ast(n, m)\times \bbR^m \to \bfI_{0, K}(\bbR^n)$.
In addition, as in {\cite[(17)]{de2003size}}, the integral-geometric identity says that its mass can be given by integrating the mass of its $0$ dimensional slices:
\[ 
\bfM(T)=\frac{1}{\boldsymbol{\beta}_1(m, n)}\int_{\bfO^\ast(n, m)\times \bbR^m} \bfM(\clS(T)(p,y))\, d(\mathbf{\theta}_{n, m}^\ast \times \clL^m)(p, y),
\]
where $\boldsymbol{\beta}_1(m, n)$ is the constant given in {\cite[2.7.16]{federer-1969-1}}. 
By~\cite[(21)]{de2003size}, we also have a similar equality for the $H$-mass of a rectifiable current $T \in \srR_m(\bbR^n)$: 
\begin{equation}
\label{eqn: M_H_T}
    \bfM_H(T) = \frac{1}{\boldsymbol{\beta}_1(m, n)}\int_{\bfO^\ast(n, m)\times \bbR^m} \bfM_H(\clS(T)(p,y))\, d(\mathbf{\theta}_{n, m}^\ast \times \clL^m)(p, y).
\end{equation}

Motivated by~(\ref{eqn: M_H_T}), De Pauw and Hardt~\cite{de2003size} defined the $H$ mass of a {\it rectifiable scan}.
They also introduced the concept of a {\it scan cycle} from the observation that a flat chain $T \in \bfF_m(\bbR^n)$ having boundary zero is equivalent to a corresponding condition on its zero dimensional slices~\cite[Proposition 3.1.6]{de2003size}. 
Here, we extend their definitions to measurable maps from $\bfO^\ast(n, m)\times \bbR^m$ to $\bfI_{0, K}(\bbR^n)$ in the following definition.
\begin{definition}\label{def: HmassVariation}
For any $\mathbf{\theta}_{n, m}^\ast \times \clL^m$ measurable map $f : \bfO^\ast(n, m)\times \bbR^m \to \bfI_{0, K}(\bbR^n)$, we define the $H$-mass of $f$ to be
\begin{equation}
\label{eqn: M_H_scan}
    \bfM_H(f) :=\frac{1}{ \boldsymbol{\beta}_1(m, n)}\int_{\bfO^\ast(n, m)\times \bbR^m} \bfM_H(f(p,y))\, d(\mathbf{\theta}_{n, m}^\ast \times \clL^m)(p, y)
\end{equation}
and define the variation of $f$ as
\begin{equation}
\label{eqn: D_scan}
    \bfD(f) := \sup\{\int_{\bfO^\ast(n, m)\times \bbR^m} |D(\phi \circ f)|:\ \phi : \bfI_{0, K}(\bbR^n) \to \bbR \text{ with } \mathrm{Lip}(\phi) \leq 1\}.
\end{equation}
Moreover we say that $f$ is a cycle, and write $\boldsymbol{\partial}f = 0$, if for $\mathbf{\theta}_{n, m}^\ast \times \clL^m$ almost every $(p, y) \in \bfO^\ast(n, m)\times \bbR^m$,
\[
f(p, y)(\boldsymbol{1}) = 0,
\]
where $\boldsymbol{1}$ denotes the constant function $\boldsymbol{1}(x) = 1$ on $\bbR^n$. 
\end{definition}

From equations~(\ref{eqn: M_H_T}) and~(\ref{eqn: M_H_scan}), we can see that for any rectifiable current $T \in \srR_m(\bbR^n)$,
\begin{equation}\label{eq: MHTequalsMHScan}
\bfM_H(T) = \bfM_H(\clS(T)).
\end{equation}
For any compact $K\subseteq \bbR^n$, we will denote the class of measurable maps obtained by rectifiable currents in $\clR_{m, K}(\bbR^n)$ as $\clS_{m, K}(\bbR^n) := \{\clS(T) : T \in \clR_{m,K}(\bbR^n)\}$ and let 
\begin{equation}\label{eq: scansFromRectifiableCurrents}
\clS_m(\bbR^n) := \bigcup_{K\subseteq \bbR^n \text{ compact}} \clS_{m, K}(\bbR^n).
\end{equation}

We now recall one of the most important theorems from~\cite{de2003size}, and then state a direct corollary that we will use in the proofs of Theorems~\ref{thm: plateausProblemWithHMass} and~\ref{thm: mainthm}.
\begin{thm}[{\cite[Theorem 3.4.1]{de2003size}}]
Suppose $X$ is a $k$ dimensional Riemannian manifold, $Y$ is a weakly separable metric space, $M : Y\to \bbR^+$ is lower semicontinuous, and $M^{-1}([0, R])$ is sequentially compact in $Y$ for all $R > 0$. 
If $f_j : X \to Y$ is measurable, and 
\[\int_X M(f_j(x))\, d\clH^k x + \int_X |D(\phi \circ f_j)| \leq \Lambda < \infty,\]
for all $j = 1, 2, \dots$ and maps $\phi : Y \to \bbR$ with $\mathrm{Lip}(\phi) \leq 1$, then some subsequence $f_{j^\ast}$ converges pointwise $\clH^k$ a.e. to a function $f : X \to Y$ with 
\[\int_X M(f(x))\,d\clH^k x + \int_X |D(\phi \circ f)| \leq \Lambda\]
for all such $\phi$.
\end{thm}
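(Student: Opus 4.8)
The plan is to deduce this $Y$-valued compactness statement from scalar $BV$-compactness applied to the compositions $\phi_i\circ f_j$, for a suitable countable family of $1$-Lipschitz functions $\phi_i$, and then to reassemble a pointwise limit in $Y$ using the hypothesis that the sublevel sets of $M$ are sequentially compact. Two preliminary reductions set things up. First, by exhausting $X$ with an increasing sequence of bounded open sets $U_1\subseteq U_2\subseteq\cdots$ of finite $\clH^k$-measure and diagonalizing at the end, it suffices to treat the case $\clH^k(X)<\infty$. Second, by weak separability of $Y$ fix a countable family $\{\phi_i\}_{i\in\bbN}$ of $1$-Lipschitz functions $\phi_i:Y\to\bbR$ that separates the points of $Y$; the point to keep in mind is that any compact $Q\subseteq Y$ embeds continuously and injectively into $\bbR^{\bbN}$ (product topology) via $y\mapsto(\phi_i(y))_i$, so that on $Q$ convergence of a sequence is equivalent to convergence of each coordinate $\phi_i$.

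Next comes the scalar compactness step. Fix $i$ and put $u^i_j:=\phi_i\circ f_j$. Since $\phi_i$ is $1$-Lipschitz, $\int_X|Du^i_j|\le\int_X|D(\phi\circ f_j)|\le\Lambda$, so the total variations are uniformly bounded. For a local $L^1$ bound I would use the $M$-hypothesis: by Chebyshev $\clH^k(\{M\circ f_j>R\})\le\Lambda/R$, while on the complement $f_j$ takes values in the compact set $Q_R:=M^{-1}([0,R])$, on which $|\phi_i|\le C_{i,R}:=\sup_{Q_R}|\phi_i|<\infty$; thus $u^i_j$ is bounded by $C_{i,R}$ off a set of measure $\le\Lambda/R$, and combining this with the total-variation bound (via truncation and the $BV$-Poincaré inequality on balls) gives $\sup_j\|u^i_j\|_{L^1(B)}<\infty$ for every ball $B\Subset X$. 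By Rellich, $\{u^i_j\}_j$ is precompact in $L^1_{\mathrm{loc}}(X)$, so a subsequence converges in $L^1_{\mathrm{loc}}$ and $\clH^k$-a.e.\ to some $h_i$. Diagonalizing over $i\in\bbN$ produces a single subsequence, still written $f_j$, with $\phi_i\circ f_j\to h_i$ $\clH^k$-a.e.\ on $X$ for every $i$.

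Then I would reconstruct the limit. Since $\int_X M(f_j(x))\,d\clH^k x\le\Lambda$, Fatou gives $\liminf_j M(f_j(x))<\infty$ for $\clH^k$-a.e.\ $x$. For such an $x$, outside also the null set from the previous step, choose $R_x$ with $f_j(x)\in Q_{R_x}$ for infinitely many $j$; along those $j$ each coordinate $\phi_i(f_j(x))\to h_i(x)$, so by the embedding observation $f_j(x)$ converges in $Q_{R_x}$ to a point $f(x)$ with $\phi_i(f(x))=h_i(x)$ for all $i$, and since $\{\phi_i\}$ separates points, $f(x)$ does not depend on $R_x$ or on the chosen subsequence. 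This defines a measurable $f:X\to Y$ (an a.e.\ pointwise limit of measurable maps), and one upgrades the convergence to $f_j(x)\to f(x)$ for $\clH^k$-a.e.\ $x$ along a further subsequence, again using compactness of the sublevels $Q_R$ to prevent the values from escaping to infinity. \textbf{This reconstruction — extracting a genuine $\clH^k$-a.e.\ limit in $Y$ from the coordinatewise $L^1_{\mathrm{loc}}$ limits and making it independent of the auxiliary choices — is the main obstacle}, and it is exactly where sequential compactness of the $M$-sublevels and the point-separating property of the $\phi_i$ are both needed.

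Finally I would verify the two bounds for $f$. Lower semicontinuity of $M$ along $f_j(x)\to f(x)$ gives $M(f(x))\le\liminf_j M(f_j(x))$ a.e., so Fatou yields $\int_X M(f(x))\,d\clH^k x\le\liminf_j\int_X M(f_j(x))\,d\clH^k x\le\Lambda$. For the other term, fix a $1$-Lipschitz $\phi:Y\to\bbR$; since $\phi$ is continuous and $f_j\to f$ a.e., $\phi\circ f_j\to\phi\circ f$ a.e., and these functions are bounded in $BV_{\mathrm{loc}}$ by the same truncation-plus-Poincaré argument, hence $\phi\circ f_j\to\phi\circ f$ in $L^1_{\mathrm{loc}}(X)$; lower semicontinuity of the total variation under $L^1_{\mathrm{loc}}$ convergence then gives $\int_X|D(\phi\circ f)|\le\liminf_j\int_X|D(\phi\circ f_j)|\le\Lambda$. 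Adding the two estimates gives the asserted bound for $f$, which completes the proof.
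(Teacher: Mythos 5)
This statement is Theorem 3.4.1 of De Pauw and Hardt, which the paper imports verbatim from \cite{de2003size} and uses as a black box; the paper contains no proof of it, so there is nothing internal to compare your argument against. Your overall architecture --- scalar $BV$ compactness applied to $\phi_i\circ f_j$ for a countable family of $1$-Lipschitz functions coming from weak separability, a diagonal extraction, reconstruction of the limit via the compact sublevel sets of $M$, and Fatou plus lower semicontinuity of the total variation for the final bounds --- is the natural strategy and is in the spirit of the original proof and of Ambrosio's compactness theory for metric-space-valued $BV$ maps.

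There is, however, a genuine gap at precisely the step you flag as ``the main obstacle,'' and flagging it is not the same as closing it. From $\int_X M(f_j(x))\,d\clH^k x\le\Lambda$, Fatou gives only $\liminf_j M(f_j(x))<\infty$ for $\clH^k$-a.e.\ $x$; it does \emph{not} give $\sup_j M(f_j(x))<\infty$ a.e., nor does any Chebyshev--Borel--Cantelli selection of a sparser subsequence produce a single compact sublevel set $Q_{R_x}$ containing the whole tail of $\{f_j(x)\}_j$ (the levels one can afford necessarily grow with $j$). Consequently your definition of $f(x)$ as the limit along an $x$-dependent subsequence on which $M$ stays bounded is legitimate and well posed (point separation makes it independent of the choices), but the claim that the \emph{full} extracted subsequence satisfies $f_j(x)\to f(x)$ in the metric of $Y$ does not follow from what you have written: without precompactness of the orbit $\{f_j(x)\}_j$, convergence of every coordinate $\phi_i(f_j(x))\to\phi_i(f(x))$ does not imply $d_Y(f_j(x),f(x))\to 0$, since $d_Y$ is recovered only as a supremum over the countable family and a supremum of quantities each tending to zero need not tend to zero. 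The sentence ``one upgrades the convergence \ldots\ again using compactness of the sublevels $Q_R$ to prevent the values from escaping to infinity'' is therefore an assertion, not an argument; making it precise is the actual content of De Pauw--Hardt's proof and requires a more careful selection (or a convergence-in-measure intermediary) than anything in your sketch. Note also that the a.e.\ metric convergence $f_j\to f$ is exactly what your final paragraph consumes (to get $\phi\circ f_j\to\phi\circ f$ a.e.\ for an \emph{arbitrary} $1$-Lipschitz $\phi$, not just for the countable family), so the gap propagates into the verification of the variation bound for $f$. The remaining steps --- the local $L^1$ bound via Chebyshev plus the Poincar\'e inequality, Rellich compactness, the identification of $L^1_{\mathrm{loc}}$ and pointwise limits, and the superadditivity of $\liminf$ in the last estimate --- are fine.
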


\begin{corollary}\label{thm:compactness}
Let $f_j : \bfO^\ast(n, m)\times \bbR^m \to \bfI_{0, K}(\bbR^n)$ be a sequence of $\boldsymbol{\theta}^\ast_{n, m}\times \clL^m$ measurable maps with 
\[\bfM_H(f_j)+\bfD(f_j) \leq \Lambda < \infty,\]
for all $j = 1, 2, \dots$. 
Then there exists a subsequence $(f_{j_k})$ of $(f_j)$ and an $\boldsymbol{\theta}^\ast_{n, m}\times \clL^m$ measurable map $f : \bfO^\ast(n, m)\times \bbR^m \to \bfI_{0, K}(\bbR^n)$ such that 
\[\bfM_H(f)+\bfD(f) \leq \Lambda,\] and
$(f_{j_k})$ converges pointwise a.e.~to $f$ in the sense that for $\boldsymbol{\theta}^\ast_{n, m}\times \clL^m$ almost every $(p, y)\in \bfO^\ast(n, m)\times \bbR^m$,
\begin{equation*}
\srF_K^H[f_{j_k}(p, y) - f(p, y)]\to 0 \text{ as } k\to \infty.
\end{equation*}
\end{corollary}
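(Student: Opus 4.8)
The plan is to obtain Corollary~\ref{thm:compactness} as a direct instance of the De Pauw--Hardt compactness theorem \cite[Theorem~3.4.1]{de2003size} recalled just above, applied to the concrete data $X := \bfO^\ast(n,m)\times\bbR^m$, $Y := \bfI_{0,K}(\bbR^n)$ metrized by the $H$-flat distance $\srF_K^H$ of Definition~\ref{def:HFlatDistance}, and $M := \bfM_H|_Y$. Since $\bfO^\ast(n,m)$ is diffeomorphic to the Grassmannian of $m$-planes in $\bbR^n$, it is a compact smooth manifold, and $X$ is a smooth manifold of dimension $k:=\dim\bfO^\ast(n,m)+m$. The first step is to pin down the Riemannian structure on $X$: the $\bfO(n)$-invariant measure $\boldsymbol{\theta}^\ast_{n,m}$ on the homogeneous space $\bfO^\ast(n,m)$ is unique up to a positive scalar, so one can take an invariant metric there, multiply by the standard metric on $\bbR^m$, and normalize — as is done in \cite{de2003size} — so that the induced Hausdorff measure $\clH^k$ on $X$ equals $\boldsymbol{\beta}_1(m,n)^{-1}\,(\boldsymbol{\theta}^\ast_{n,m}\times\clL^m)$ and so that the total-variation functionals $\int_X|D(\cdot)|$ on $X$ are exactly those appearing in~(\ref{eqn: D_scan}). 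With this identification, $\int_X M(f_j)\,d\clH^k = \bfM_H(f_j)$ by~(\ref{eqn: M_H_scan}).

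Next I would verify the three structural hypotheses that \cite[Theorem~3.4.1]{de2003size} imposes on $(Y,M)$. (i) $(Y,\srF_K^H)$ is weakly separable: this is established in \cite{de2003size}, a separating countable family of $1$-Lipschitz functionals being $T\mapsto T(g)$ with $g$ ranging over a countable family of Lipschitz functions on $K$. (ii) $\bfM_H$ is $\srF_K^H$-lower semicontinuous on $Y$: also from \cite{de2003size}. (iii) $M^{-1}([0,R])$ is sequentially compact in $(Y,\srF_K^H)$ for every $R>0$: by Remark~\ref{rem: 0dimIntegralCurrents} an element of $Y$ is $\sum_{i=1}^\ell a_i\boldsymbol{\delta}_{x_i}$ with $a_i\in\bbN$ and $x_i\in K$, so $\bfM_H(\sum_i a_i\boldsymbol{\delta}_{x_i})=\sum_i H(a_i)$; since $H(1)=1$, $H$ is strictly increasing, and $H(\theta)\to\infty$, the bound $\bfM_H\le R$ forces $\ell\le R$ and $a_i\le H^{-1}(R)$, so $M^{-1}([0,R])$ is the image of a compact finite-dimensional parameter set (finitely many choices of $\ell$ and coefficients, together with $\ell$-tuples of points in the compact set $K$) under a map continuous for $\srF_K^H$ — using that on such uniformly mass-bounded $0$-chains weak, flat, and $H$-flat convergence all coincide (Remark~\ref{rem: weakConvergenceAndFlatNorm} and \cite{de2003size}) — and $M^{-1}([0,R])$ is closed under this convergence by subadditivity of $H$ together with lower semicontinuity. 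Each of these is a standard fact from \cite{de2003size} that I would cite with a short justification rather than reprove.

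Then comes the quantitative check, which is where the normalization of the first step pays off: for each $j$ and every $\phi\colon Y\to\bbR$ with $\mathrm{Lip}(\phi)\le1$ one has $\int_X|D(\phi\circ f_j)|\le\bfD(f_j)$ by~(\ref{eqn: D_scan}), hence $\int_X M(f_j)\,d\clH^k+\int_X|D(\phi\circ f_j)|\le\bfM_H(f_j)+\bfD(f_j)\le\Lambda$, which is precisely the hypothesis of \cite[Theorem~3.4.1]{de2003size}. Applying that theorem produces a subsequence $(f_{j_k})$ converging $\clH^k$-a.e. — equivalently $(\boldsymbol{\theta}^\ast_{n,m}\times\clL^m)$-a.e., the two measures being mutually absolutely continuous — to a $(\boldsymbol{\theta}^\ast_{n,m}\times\clL^m)$-measurable map $f\colon X\to Y$ (measurability being inherited as an a.e.\ limit of measurable maps), with $\int_X M(f)\,d\clH^k+\int_X|D(\phi\circ f)|\le\Lambda$ for all such $\phi$; pointwise convergence in $(Y,\srF_K^H)$ is exactly $\srF_K^H[f_{j_k}(p,y)-f(p,y)]\to0$. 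Taking the supremum over $\phi$ in the last inequality gives $\bfD(f)\le\Lambda-\bfM_H(f)$, i.e.\ $\bfM_H(f)+\bfD(f)\le\Lambda$, which is the assertion; note the target of $f$ is automatically $Y$ because $Y=\bigcup_{R>0}M^{-1}([0,R])$ and the a.e.\ limit of the $\srF_K^H$-Cauchy sequence $(f_{j_k}(p,y))_k$ lies in $Y$ by (iii).

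I do not expect a serious obstacle here, since the statement is essentially a transcription of \cite[Theorem~3.4.1]{de2003size} to the setting of $0$-dimensional slices. The one point that requires genuine care is the bookkeeping in the first step: one must make sure the Riemannian metric on $\bfO^\ast(n,m)\times\bbR^m$ and the constant $\boldsymbol{\beta}_1(m,n)$ are arranged so that $\int_X M(f_j)\,d\clH^k$ and $\int_X|D(\phi\circ f_j)|$ agree \emph{on the nose} with $\bfM_H(\cdot)$ and the integrands defining $\bfD(\cdot)$ in Definition~\ref{def: HmassVariation}, and not merely up to constants (which would weaken the final bound from $\Lambda$ to a multiple of $\Lambda$); this is handled by adopting the conventions of \cite{de2003size}. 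The secondary point, the sequential compactness of $M^{-1}([0,R])$ and the exhaustion $Y=\bigcup_R M^{-1}([0,R])$ needed for the limit to stay in $Y$, is also already established in \cite{de2003size}.
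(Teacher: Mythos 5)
Your proposal is correct and follows exactly the same route as the paper: the paper's proof is simply the one-line observation that this is the special case of \cite[Theorem 3.4.1]{de2003size} with $X=\bfO^\ast(n,m)\times\bbR^m$, $Y=\bfI_{0,K}(\bbR^n)$ metrized by $\srF_K^H$, and $M=\bfM_H$. Your additional verification of the hypotheses (weak separability, lower semicontinuity, sequential compactness of $M^{-1}([0,R])$, and the normalization bookkeeping) is all left implicit in the paper but is consistent with what \cite{de2003size} establishes.
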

\begin{proof}
This is a special case of \cite[Theorem 3.4.1]{de2003size} with
\begin{align*}
X &= \mathrm{the\ Riemannian\ manifold}\ \bfO^\ast(n, m)\times \bbR^m\\
Y &= \bfI_{0, K}(\bbR^n)\ \mathrm{with}\ \mathrm{dist}_Y(T, \tilde{T}) = \srF_K^H(T - \tilde{T})\\
M &= \bfM_H.
\end{align*}
\end{proof} 
\begin{comment}
{\color{blue} (Do we need this paragraph?) In particular, they considered the following example in the proof of \cite[Theorem 3.5.2]{de2003size}: given
$T_0\in \srR_m(\bbR^n)$, consider
$f_{j} = \clS(T_{\epsilon_{j}} - T_0) = \langle T_{\epsilon_{j}} - T_0, \cdot, \cdot \rangle$,
where 
\begin{align*}
T_{\epsilon_j}\in \text{argmin} \{\bfM_H(T) + \epsilon_j^2 \bfM(T): T \in \srR_m(\bbR^n), \partial T = \partial T_0, spt(T)\subseteq K\}.
\end{align*}
In the proof of \cite[Theorem 3.5.2]{de2003size}, they achieved the following inequalities: the $L^1$-bound
\begin{equation}\label{eq:L1bound}
\bfM_H(\clS(T_{\epsilon_{j}} - T_0))=\bfM_H(f_j)
\leq \bfM_H(T_{\epsilon_{j}}) + \bfM_H(T_0)
\end{equation}
and the MBV bound
\begin{equation}\label{eq:MBVbound}
\bfD(\clS(T_{\epsilon_{j}} - T_0))=\bfD(f_j)
\leq \mathbf{c}_2[\bfM_H(T_{\epsilon_{j}}) + \bfM_H(T_0)].
\end{equation}
for some constant $\mathbf{c}_2$ depends only on $n$ and $K$.
}
\end{comment}

\subsection{Existence of $H$-mass minimizing scans}\label{sec:4.3}
In this subsection, we amend the seminal work~\cite[Theorem 3.5.2.]{de2003size} of De Pauw and Hardt, who consider the following Plateau-type problem: Given $T_0 \in \srR_m(\bbR^n)$,
\[
\begin{cases}
          \text{minimize } \bfM_H(T) \\
          \text{among } T \in \srR_m(\bbR^n) \text{ such that } \partial T = \partial T_0,
     \end{cases}
\]
where $\bfM_H(T)$ denotes the $H$-mass of $T \in \srR_m(\bbR^n)$ as given in~(\ref{def:HMass}).
Let $\Gamma(\bfM_H, T_0, \mathbb{R}^n)$ be the infimum of that problem:
\begin{equation}\label{prob:DPH_Hmass_plateau}
\Gamma(\bfM_H, T_0, \mathbb{R}^n) := \inf\{\bfM_H(T) : T \in \srR_m(\bbR^n)\text{ and }  \partial T = \partial T_0\}.
\end{equation}
For the convenience of the reader, we now state~\cite[Theorem 3.5.2.]{de2003size} of De Pauw and Hardt.

\begin{thm*}[{\cite[Theorem 3.5.2.]{de2003size}}]
Let $T_0 \in \srR_m(\bbR^n)$ with $\clI_1^m(\spt(\partial T_0)) = 0$ and let $H$ be a concave integrand.
Then there exists an $m$-dimensional rectifiable scan $\clT$ in $\bbR^n$ such that $\boldsymbol{\partial}(\clT - \clS(T_0)) = 0$ and $\bfM_H(\clT) = \Gamma(\bfM_H, T_0, \mathbb{R}^n)$. 
Moreover, if $\spt(\partial T_0)$ is an $m - 1$ dimensional compact propertly embedded $C^{1, 1}$ submanifold then there exists $T \in \bfI_m(\bbR^n)$ with $\partial T = \partial T_0$ and $\bfM_H(T) = \Gamma(\bfM_H, T_0, \mathbb{R}^n)$. 
\end{thm*}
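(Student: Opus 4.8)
The plan is to follow the strategy of De Pauw and Hardt: regularize $\bfM_H$, solve each regularized problem by Federer--Fleming compactness, and pass to the limit using the scan compactness of Corollary~\ref{thm:compactness}. Write $\Gamma := \Gamma(\bfM_H, T_0, \bbR^n)$ and fix a compact convex $K$ containing $\spt(T_0)$, so that $\spt(\partial T_0)\subseteq K$ and $T_0$ itself is admissible. For each $\epsilon>0$ one minimizes $T \mapsto \bfM_H(T) + \epsilon^2 \bfM(T)$ over $\{T \in \srR_{m,K}(\bbR^n) : \partial T = \partial T_0\}$. Along a minimizing sequence $\{T^{(i)}\}$ the differences $T^{(i)}-T_0$ are cycles whose mass is bounded by $\epsilon^{-2}(\text{energy}) + \bfM(T_0)$, so Federer--Fleming yields a flat-convergent subsequence; since $\bfM_H$ is lower semicontinuous under flat convergence of rectifiable currents with a mass bound (De Pauw--Hardt), and $\epsilon^2\bfM$ is lower semicontinuous, the limit is a minimizer $T_\epsilon$.

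Next take $\epsilon_j \downarrow 0$. Comparing $T_{\epsilon_j}$ with rectifiable currents whose $H$-mass is within $\delta$ of $\Gamma$ gives $\limsup_j \bfM_H(T_{\epsilon_j}) \le \Gamma$ and $\epsilon_j^2 \bfM(T_{\epsilon_j}) \to 0$. The technical heart is the bounded-variation estimate
\[
\bfD\bigl(\clS(T_{\epsilon_j} - T_0)\bigr) \;\le\; \mathbf{c}\,\bigl[\bfM_H(T_{\epsilon_j}) + \bfM_H(T_0)\bigr],
\]
with $\mathbf{c} = \mathbf{c}(n,K)$: it follows from the $\bfM_H$-minimality of $T_{\epsilon_j}$ via a cone/retraction comparison that controls how the $0$-dimensional slices of $T_{\epsilon_j} - T_0$ can jump across slicing planes, and it is here that the hypothesis $\clI_1^m(\spt(\partial T_0)) = 0$ is used, guaranteeing that a.e.\ slicing plane meets $\spt(\partial T_0)$ in an $\clH^0$-negligible set so that $T_0$ and $\partial T_0$ slice compatibly. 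Together with the first two bounds this gives $\sup_j \bigl[\bfM_H(f_j) + \bfD(f_j)\bigr] \le \Lambda < \infty$ for $f_j := \clS(T_{\epsilon_j} - T_0)$.

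Applying Corollary~\ref{thm:compactness} to $\{f_j\}$ produces a subsequence $f_{j_k} \to f$ pointwise $\boldsymbol{\theta}^\ast_{n,m} \times \clL^m$-a.e.\ in $\srF^H_K$, with $\bfM_H(f) + \bfD(f) \le \Lambda$. Since slicing is linear, $f_{j_k} = \clS(T_{\epsilon_{j_k}}) - \clS(T_0)$, so $\clT := f + \clS(T_0)$ is a pointwise a.e.\ limit of scans of rectifiable currents with uniformly bounded $\bfM_H + \bfD$, hence a rectifiable scan in the sense of De Pauw--Hardt. Each $f_j(p,y)(\boldsymbol{1}) = \langle T_{\epsilon_j} - T_0, p, y\rangle(\boldsymbol{1}) = 0$ for a.e.\ $(p,y)$ since $\partial(T_{\epsilon_j}-T_0)=0$, and $S \mapsto S(\boldsymbol{1})$ is $\srF^H_K$-continuous on $\bfI_{0,K}(\bbR^n)$, so $f(p,y)(\boldsymbol{1}) = 0$ a.e., i.e.\ $\boldsymbol{\partial}(\clT - \clS(T_0)) = \boldsymbol{\partial} f = 0$. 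Fatou's lemma together with lower semicontinuity of the $0$-dimensional $H$-mass under $\srF^H_K$-convergence gives $\bfM_H(\clT) \le \liminf_k \bfM_H(\clS(T_{\epsilon_{j_k}})) = \liminf_k \bfM_H(T_{\epsilon_{j_k}}) \le \Gamma$. For the reverse inequality one approximates the rectifiable scan $\clT$ by rectifiable currents $R_i$ with $\partial R_i = \partial T_0$ and $\bfM_H(R_i) \to \bfM_H(\clT)$, whence $\bfM_H(\clT) \ge \Gamma$ by definition of $\Gamma$; this approximation step — recovering genuine currents with exactly the prescribed boundary from the limit scan — is the delicate point, and it is precisely the step the present article revisits. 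Combining the two inequalities gives $\bfM_H(\clT) = \Gamma$.

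For the ``moreover'' assertion, when $\spt(\partial T_0)$ is a compact properly embedded $C^{1,1}$ submanifold, boundary-regularity estimates near $\spt(\partial T_0)$ combined with interior density bounds coming from $\bfM_H$-minimality yield a uniform mass bound $\sup_j \bfM(T_{\epsilon_j}) \le C < \infty$; then $\{T_{\epsilon_j} - T_0\}$ is precompact for the flat norm in $\bfI_{m,K}(\bbR^n)$, a subsequence converges to some $T \in \bfI_m(\bbR^n)$ with $\partial T = \partial T_0$, and lower semicontinuity of $\bfM_H$ under flat convergence with the mass bound, together with admissibility of $T$, forces $\bfM_H(T) = \Gamma$. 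The main obstacle is twofold: analytically, the variation estimate of the second paragraph (a cut-and-paste comparison against the minimizer); logically, the reverse inequality $\bfM_H(\clT) \ge \Gamma$, which requires converting the limiting rectifiable scan back into admissible rectifiable currents with the prescribed boundary.
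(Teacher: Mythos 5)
There is a genuine gap, and it sits exactly where you flag it. The statement you are proving is quoted in this paper only in order to point out that its published proof is incomplete: the present article does not prove it, and in \S\ref{sec:4.3} it argues that the final equality $\bfM_H(\clT) = \Gamma(\bfM_H, T_0, \bbR^n)$ cannot be concluded by the De Pauw--Hardt argument. Your first three paragraphs faithfully reproduce that argument (regularization by $\epsilon^2\bfM$, Federer--Fleming for the regularized problems, the MBV estimate, Corollary~\ref{thm:compactness}, Fatou), and they correctly deliver $\boldsymbol{\partial}(\clT - \clS(T_0)) = 0$ and $\bfM_H(\clT) \le \Gamma(\bfM_H, T_0, \bbR^n)$. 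The problem is the reverse inequality. You assert that ``one approximates the rectifiable scan $\clT$ by rectifiable currents $R_i$ with $\partial R_i = \partial T_0$ and $\bfM_H(R_i) \to \bfM_H(\clT)$.'' No such approximation is known to exist, and nothing in the construction produces it. The sequence $T_{\epsilon_{j_k}}$ you already have satisfies $\partial T_{\epsilon_{j_k}} = \partial T_0$ and $\bfM_H(T_{\epsilon_{j_k}}) \ge \Gamma(\bfM_H, T_0, \bbR^n)$, but Fatou only bounds $\bfM_H(\clT)$ from above by $\liminf_k \bfM_H(T_{\epsilon_{j_k}})$; the slicewise $H$-mass of the limit scan could in principle drop strictly below $\Gamma(\bfM_H, T_0, \bbR^n)$, and the class $\clS_m(\bbR^n)$ of scans of rectifiable currents is not closed under pointwise a.e.\ convergence, so $\clT$ need not be admissible for the original infimum at all.

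This is precisely the defect the paper repairs by changing the functional rather than by supplying the missing approximation: $\newM_{H}(\clT)$ is \emph{defined} (Definition~\ref{def: L_H}) as the infimum of $\liminf_i \bfM_H(T_i)$ over sequences of rectifiable currents with the prescribed boundary whose scans converge to $\clT$, so the inequality $\newM_{H}(\clT) \ge \Gamma(\bfM_H, T_0, \bbR^n)$ holds by construction, and Theorem~\ref{thm: plateausProblemWithHMass} together with its corollary recovers a correct existence statement with $\bfM_H$ replaced by $\newM_{H}$ and with rectifiable scans replaced by the class $\scan_{m,K}(\bbR^n)$ of scans with boundary. If you want to defend the theorem as literally stated, you must either prove the approximation property you invoked (that the limit scan $\clT$ can be reached by admissible rectifiable currents with convergent $H$-mass) or show $\bfM_H(\clT) = \newM_{H}(\clT)$ for such limit scans; neither is done in \cite{de2003size} nor here. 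The ``moreover'' clause inherits the same difficulty, since your uniform mass bound is asserted from regularity theory that is not set up in this framework.
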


Before we discuss their proof, with the notation in~(\ref{eq: scansFromRectifiableCurrents}) and by equation~(\ref{eq: MHTequalsMHScan}), observe that~(\ref{prob:DPH_Hmass_plateau}) has the following equivalent formulation:
\begin{equation*}
\Gamma(\bfM_H, T_0, \mathbb{R}^n) = \inf\{\bfM_H(\clR) : \clR \in \clS_m(\bbR^n)\text{ and }  \boldsymbol{\partial}( \clR - \clS(T_0)) = 0\}.
\end{equation*}
In other words, $\clS_m(\bbR^n)$ is the class of objects that~\cite{de2003size} minimizes $\bfM_H$ over.

Now, in the proof, De Pauw and Hardt construct a sequence of $\bfM_H$ minimizing rectifiable currents, and use their BV compactness theorem \cite[Theorem 3.4.1.]{de2003size} to obtain a subsequence $\{T_j\}$ such that $\clS(T_j)$ converges {\it pointwise almost everywhere} to a {\it rectifiable scan} $\clT$.
By the lower-semicontinuity of $\bfM_H$ for $0$ dimensional integral currents (under the $H$-flat distance), and by Fatou's Lemma, it follows that
\[
\bfM_H(\clT)\le \Gamma(\bfM_H, T_0, \mathbb{R}^n).
\]
However, since the rectifiable scan $\clT$ is not necessarily a rectifiable current, we cannot conclude that $\clT \in \clS_m(\bbR^n)$, and in turn, can neither conclude that
\[\bfM_H(\clT) = \Gamma(\bfM_H, T_0, \mathbb{R}^n).\]
To overcome this issue (the lack of closedness of $\clS_m(\bbR^n)$ under pointwise almost everywhere convergence), we introduce a modified class of objects that we will minimize over, denoted by $\scan_m(\bbR^n)$, as well as a modified version of the $H$-mass.

\begin{definition}\label{defn: scan}
Whenever $K$ is a compact subset of $\bbR^n$, we say that an $\boldsymbol{\theta}^\ast_{n, m}\times \clL^m$ measurable map $\clT : \bfO^\ast(n, m)\times \bbR^m \to \srF_{0, K}(\bbR^n)$ is an {\it $m$-dimensional scan in $K$} if
it is in the closure of $\clS_{m,K}(\bbR^n)$. 
That is, if there exists a sequence of rectifiable currents $T_1, T_2, \dots \in \srR_{m, K}(\bbR^n)$ such that $\clS(T_i) \to \clT$ pointwise almost everywhere, in the sense that 
\begin{equation}\label{eq:0dimHFlatConvergence2}
\lim_{i\to \infty}\srF_K^H[\langle T_i, p, y\rangle - \clT(p, y)]= 0 \text{ for } \boldsymbol{\theta}^\ast_{n, m}\times \clL^m \text{ almost every } (p, y)\in \bfO^\ast(n, m)\times \bbR^m.
\end{equation}
We define 
\[
\scan_{m, K}(\bbR^n)
\]
to be the collection of all $m$-dimensional scans in $K$ such that there exists a rectifiable current $T\in \srR_{m,K}(\bbR^n)$ with $\boldsymbol{\partial}(\clT - \clS(T)) = 0$.\footnote{By Definition~\ref{def: HmassVariation}, $\boldsymbol{\partial}(\clT - \clS(T)) = 0$ means that $(\clT - \clS(T))(p, y)(\boldsymbol{1}) = 0$ for $\theta^\ast_{n, m}\times \clL^m$ almost every $(p, y) \in \bfO^\ast(n,m)\times\bbR^m$.}
Members of $\scan_{m, K}(\bbR^n)$ are called {\it $m$-dimensional scans in $K$ with boundary}. 
\end{definition}

Let us now investigate a couple of facts concerning the classes $\scan_{m, K}(\bbR^n)$ and its members. 

\begin{proposition}
Let $K_1, K_2$ be two compact subsets of $\bbR^n$.  
If $K_1\subseteq K_2$, then ${\it Scan}_{m,K_1}(\bbR^n)\subseteq {\it Scan}_{m,K_2}(\bbR^n)$.
\end{proposition}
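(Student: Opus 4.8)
The plan is to show that every $m$-dimensional scan in $K_1$ with boundary is also one in $K_2$, by tracking how the two defining conditions of Definition~\ref{defn: scan} behave when the support constraint is enlarged from $K_1$ to $K_2$. Fix $\clT \in \scan_{m, K_1}(\bbR^n)$.

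First I would record the monotonicity of the current classes in the ambient compact set: since $K_1 \subseteq K_2$, the definition in~\cite[4.1.24.]{federer-1969-1} gives $\srR_{j, K_1}(\bbR^n) \subseteq \srR_{j, K_2}(\bbR^n)$ for every $j$ (for CLNRs this is immediate from~(\ref{eq: compactRectifiableCurrents})), and hence $\srF_{j, K_1}(\bbR^n) \subseteq \srF_{j, K_2}(\bbR^n)$ via the definition $\srF_{j, K}(\bbR^n) = \{R + \partial S\}$. Consequently $\clT$, which takes values in $\srF_{0, K_1}(\bbR^n)$, also takes values in $\srF_{0, K_2}(\bbR^n)$. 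Next I would observe the reverse monotonicity of the $H$-flat distance: for any $P \in \srF_{0, K_1}(\bbR^n)$, every competitor $S$ in the infimum defining $\srF^H_{K_1}(P)$ (i.e. $S \in \srR_{1, K_1}(\bbR^n)$ with $P - \partial S \in \srR_{0, K_1}(\bbR^n)$) is also a competitor for $\srF^H_{K_2}(P)$ by the inclusions above, whence $\srF^H_{K_2}(P) \leq \srF^H_{K_1}(P)$. In particular the inclusion $(\srF_{0, K_1}(\bbR^n), \srF^H_{K_1}) \hookrightarrow (\srF_{0, K_2}(\bbR^n), \srF^H_{K_2})$ is $1$-Lipschitz, so $\clT$ remains $\boldsymbol{\theta}^\ast_{n, m}\times\clL^m$ measurable when regarded as a map into $(\srF_{0, K_2}(\bbR^n), \srF^H_{K_2})$.

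Now let $\{T_i\} \subseteq \srR_{m, K_1}(\bbR^n)$ be a sequence witnessing that $\clT$ is an $m$-dimensional scan in $K_1$, so $\srF^H_{K_1}[\langle T_i, p, y\rangle - \clT(p, y)] \to 0$ for $\boldsymbol{\theta}^\ast_{n, m}\times\clL^m$-a.e.~$(p, y)$; discarding a null set we may assume $\langle T_i, p, y\rangle \in \bfI_{0, K_1}(\bbR^n)$ there for all $i$, so each slice difference lies in $\srF_{0, K_1}(\bbR^n)$. Each $T_i$ lies in $\srR_{m, K_2}(\bbR^n)$, and applying $\srF^H_{K_2} \leq \srF^H_{K_1}$ to these slice differences gives $\srF^H_{K_2}[\langle T_i, p, y\rangle - \clT(p, y)] \to 0$ a.e.; hence $\clS(T_i) \to \clT$ pointwise a.e.~in the sense of~(\ref{eq:0dimHFlatConvergence2}) with $K$ replaced by $K_2$, so $\clT$ is an $m$-dimensional scan in $K_2$. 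Finally, the boundary condition transfers for free: by definition of $\scan_{m, K_1}(\bbR^n)$ there is $T \in \srR_{m, K_1}(\bbR^n)$ with $\boldsymbol{\partial}(\clT - \clS(T)) = 0$ (a condition on the $0$-dimensional slices that does not reference $K$), and the same $T$ lies in $\srR_{m, K_2}(\bbR^n)$; therefore $\clT \in \scan_{m, K_2}(\bbR^n)$.

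The argument is essentially a direct unwinding of the two defining clauses, so there is no serious obstacle; the only step that requires a little care is justifying $\srR_{j, K_1}(\bbR^n) \subseteq \srR_{j, K_2}(\bbR^n)$ for general (possibly non-CLNR) compact sets, which one must extract from Federer's definition~\cite[4.1.24.]{federer-1969-1} rather than from the clean identity~(\ref{eq: compactRectifiableCurrents}).
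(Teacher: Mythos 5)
Your proposal is correct and follows the same route as the paper's own (much terser) proof: the paper simply notes that since $K_1 \subseteq K_2$, the witnessing sequence and the rectifiable current $T$ with $\boldsymbol{\partial}(\clT - \clS(T)) = 0$ remain valid with $K_2$ in place of $K_1$. Your write-up supplies the details the paper leaves implicit — in particular the inclusion $\srR_{j,K_1}(\bbR^n) \subseteq \srR_{j,K_2}(\bbR^n)$ and the monotonicity $\srF^H_{K_2} \leq \srF^H_{K_1}$ that lets the pointwise a.e.\ convergence transfer — which is a faithful unwinding of the same argument.
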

\begin{proof}
For any $\clT\in \scan_{m, K_1}(\bbR^n)$, by definition, $\clT : \bfO^\ast(n, m)\times \bbR^m \to \srF_{0, K_1}(\bbR^n)$ is an $m$-dimensional scan in $K_1$ and $\boldsymbol{\partial}(\clT - \clS(T)) = 0$ for some $T\in \srR_{m,K_1}(\bbR^n)$.
Since $K_1\subseteq K_2$, $\clT$ is also an $m$-dimensional scan in $K_2$ and $T\in \srR_{m,K_2}(\bbR^n)$. 
So $\clT\in \scan_{m, K_2}(\bbR^n)$ as well.
\end{proof}

\begin{lem}
\label{lem: integral_flat_chains_are_scans}
Let $K$ be a compact subset of $\bbR^n$. 
For any integral flat chain $T \in \srF_{m, K}(\bbR^n)$, the $\boldsymbol{\theta}^\ast_{n, m}\times \clL^m$ measurable map 
\[
\clS(T) : \bfO^\ast(n, m)\times \bbR^m \to \srF_{0, K}(\bbR^n), \quad \clS(T)(p, y) := \langle T, p, y\rangle,
\]
is an $m$-dimensional scan in $K$ with boundary. 
\end{lem}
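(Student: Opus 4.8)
The statement asks us to show that any integral flat chain $T \in \srF_{m, K}(\bbR^n)$, viewed as a scan via slicing, lies in $\scan_{m, K}(\bbR^n)$. By Definition~\ref{defn: scan}, this requires two things: first, that $\clS(T)$ is an $m$-dimensional scan in $K$, i.e.\ a pointwise a.e.\ limit of $\clS(T_i)$ for rectifiable currents $T_i \in \srR_{m,K}(\bbR^n)$; and second, that there exists a rectifiable current $T' \in \srR_{m,K}(\bbR^n)$ with $\boldsymbol{\partial}(\clS(T) - \clS(T')) = 0$.

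For the first part, I would use the definition of $\srF_{m,K}(\bbR^n)$ together with the approximation properties of integral flat chains. By definition $T = R + \partial S$ for some $R \in \srR_{m,K}(\bbR^n)$ and $S \in \srR_{m+1,K}(\bbR^n)$. The cleanest route is to invoke the fact that $\srF_K(T_i - T) \to 0$ for a sequence of rectifiable currents $T_i \in \srR_{m,K}(\bbR^n)$ — for instance $T_i = R + \partial S$ works trivially, or one uses the standard density of rectifiable currents in integral flat chains under the integral flat norm $\srF_K$. Then I need to pass from $\srF_K$-convergence of the currents to pointwise a.e.\ $\srF_K^H$-convergence of their slices. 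The slicing map $T \mapsto \langle T, p, y\rangle$ is compatible with the integral flat norm (slices of a flat chain of small flat norm have small flat norm, on average over $p$ and $y$, by the slicing inequality in~\cite{de2003size}, Proposition 3.1.3), and $\srF_K^H$ is comparable to $\srF_K$ on $0$-dimensional integral flat chains in a fixed compact set (this comparison is part of the setup in~\cite{de2003size}, since $H$ is a concave integrand with $H(1)=1$ and subadditivity gives $\srF_K^H \le \srF_K$ and $H$ increasing gives a reverse bound up to the value $H$ takes — for $0$-chains supported in $K$ the multiplicities are bounded so both directions hold). Combining these, after passing to a subsequence, $\clS(T_i)(p,y) \to \clS(T)(p,y)$ in $\srF_K^H$ for $\boldsymbol{\theta}^\ast_{n,m}\times\clL^m$-a.e.\ $(p,y)$, which is exactly~(\ref{eq:0dimHFlatConvergence2}).

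For the second part, I would exhibit the required rectifiable current explicitly. Writing $T = R + \partial S$ with $R \in \srR_{m,K}(\bbR^n)$ and $S \in \srR_{m+1,K}(\bbR^n)$, we have $\partial T = \partial R$, so $T$ and $R$ have the same boundary. By~\cite{de2003size}, Proposition 3.1.6, a flat chain having boundary zero is equivalent to the pointwise condition $\langle \cdot, p, y\rangle(\boldsymbol 1) = 0$ on its $0$-dimensional slices; applying this to $T - R = \partial S$ (which has zero boundary), we get $\langle T - R, p, y\rangle(\boldsymbol 1) = 0$ for $\boldsymbol{\theta}^\ast_{n,m}\times\clL^m$-a.e.\ $(p,y)$, i.e.\ $\boldsymbol{\partial}(\clS(T) - \clS(R)) = 0$. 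Taking $T' = R$ finishes this part. Note this also uses linearity of the slicing map, $\langle T - R, p, y\rangle = \langle T, p, y\rangle - \langle R, p, y\rangle$.

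The main obstacle I anticipate is the interplay between the two notions of "smallness" for $0$-dimensional chains — the $\srF_K$ flat norm versus the $\srF_K^H$ $H$-flat distance — and making sure the slicing estimates genuinely yield a.e.\ (not merely average or measure-theoretic) convergence of slices. This is handled by first getting convergence in $L^1$ of $(p,y) \mapsto \srF_K^H[\clS(T_i)(p,y) - \clS(T)(p,y)]$ via the integral-geometric slicing inequality, then extracting an a.e.-convergent subsequence; the subsequence extraction is harmless because Definition~\ref{defn: scan} only requires existence of \emph{some} approximating sequence. Everything else is bookkeeping with definitions already recorded in the excerpt.
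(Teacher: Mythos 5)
Your proposal is correct and follows essentially the same route as the paper: approximate $T$ in the flat norm $\srF_K$ by rectifiable currents, upgrade to the $H$-flat distance, slice to obtain $L^1$ convergence of the $0$-dimensional slices and extract an a.e.-convergent subsequence, then verify the boundary condition by writing $T = R + \partial S$ and applying Proposition 3.1.6 of \cite{de2003size} to $\partial S$. One small slip: the parenthetical claim that ``$T_i = R + \partial S$ works trivially'' does not, since that current equals $T$ itself and need not be rectifiable; the density of $\srR_{m,K}(\bbR^n)$ in $\srF_{m,K}(\bbR^n)$ under $\srF_K$, which you also invoke, is the correct (and the paper's) starting point, and the unneeded reverse comparison between $\srF_K$ and $\srF_K^H$ on $0$-chains (multiplicities of $0$-chains in $K$ are not bounded) should simply be dropped.
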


\begin{proof}
We first show that $\clS(T)$ is an $m$-dimensional scan in $K$. Indeed, since $T \in \srF_{m, K}$ is an integral flat chain, there exists a sequence of rectifiable currents $T_1, T_2, \cdots \in \srR_{m, K}(\bbR^n)$ such that $\lim_{i\to \infty}\clF_K(T - T_i) = 0$.
By~\cite[Remark 3.2.9.]{de2003size}, $\clF_K^H(T - T_i) \leq H(2)\clF_K(T - T_i)$, and hence $\lim_{i\to\infty} \clF_K^H(T - T_i) = 0$. 
By~\cite[Remark 3.2.11.]{de2003size}, $\int_{\bbR^m} \clF_K^H(\langle T - T_i, p, y\rangle)\,d\clL^m y \leq \clF_K^H(T - T_i)$ for any $p \in \bfO^\ast(n, m)$.
Thus, 
\[
\int_{\bfO^\ast(n, m)\times \bbR^m} \clF_K^H(\langle T - T_i, p, y\rangle)\,d(\boldsymbol{\theta}^\ast_{n, m}\times \clL^m)(p, y) \leq \boldsymbol{\beta}_1(n, m) \clF_K^H(T - T_i) \to 0 \quad \text{ as } i\to \infty. 
\]
By~\cite[Theorem 1.21]{evans-2015-measure}, there exists a subsequence $(T_{ij})$ of $(T_i)$ such that 
\[
\lim_{j\to \infty}\clF_K^H(\langle T - T_{ij}, p, y\rangle ) = 0
\]
for $\boldsymbol{\theta}^\ast_{n, m}\times \clL^m$ almost every $(p, y)\in \bfO^\ast(n, m)\times \bbR^m$. 
As a result, $\clS(T)$ is an $m$-dimensional scan in $K$.

We now show that $\clS(T)$ is also in $\scan_{m,K}(\bbR^n)$.
Since $T\in \srF_{m, K}(\bbR^n)$, there exists $R\in \srR_{m,K}(\bbR^n)$ and $Q\in \srR_{m+1,K}(\bbR^n)$ such that $T=R+\partial Q$. 
Thus,
by ~\cite[Proposition 3.1.6.]{de2003size},
\begin{equation}
    \boldsymbol{\partial}(\clS(T) - \clS(R))=\boldsymbol{\partial}(\clS(T-R)) = \boldsymbol{\partial}(\clS(\partial Q))=0.
    \label{eqn: boundary_S(T)_S(R)}
\end{equation}
Therefore, $\clS(T)\in\scan_{m,K}(\bbR^n)$.
\end{proof}

\begin{definition}\label{def: boundary}
We define the class
\[
\scan_m(\bbR^n) := \bigcup_{K\subseteq \bbR^n \text{ compact}} \scan_{m, K}(\bbR^n),
\]
whose members are called {\it $m$-dimensional scans in $\bbR^n$ with boundary}.
For each $\clT \in \scan_{m}(\bbR^n)$, the boundary of $\clT$ is defined by 
\begin{equation}
\label{eqn: defn_boundary_of_scan}
    \partial \clT := \clS(\partial T)
\end{equation}
for any $T\in \srR_{m}(\bbR^n)$ satisfying $\boldsymbol{\partial}(\clT - \clS(T)) = 0$.
\end{definition}

With this definition we can now say that by (\ref{eqn: boundary_S(T)_S(R)}), $\partial(\clS(T))=\clS(\partial T)$ for any $T\in \srF_m(\bbR^n)$.
\begin{proposition}
\label{prop: boundary_well_defined_on _scan_m}
Equation (\ref{eqn: defn_boundary_of_scan}) defines a boundary operator $\partial: \scan_m(\bbR^n)\rightarrow \scan_{m-1}(\bbR^n)$ with $\partial^2=0$, and $\partial: \scan_{m,K}(\bbR^n)\rightarrow \scan_{m-1,K}(\bbR^n)$ for $K$ compact.
\end{proposition}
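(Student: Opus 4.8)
The plan is to establish three things: that formula~(\ref{eqn: defn_boundary_of_scan}) is independent of the choice of rectifiable witness $T$; that $\partial$ sends $\scan_{m,K}(\bbR^n)$ into $\scan_{m-1,K}(\bbR^n)$, and hence $\scan_m(\bbR^n)$ into $\scan_{m-1}(\bbR^n)$; and that $\partial\circ\partial=0$. None of these requires new machinery beyond Lemma~\ref{lem: integral_flat_chains_are_scans}, the slice-boundary identity recorded in the text just before this proposition, and De Pauw and Hardt's characterization~\cite[Proposition 3.1.6]{de2003size} of flat cycles in terms of their $0$-dimensional slices.

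For well-definedness, I would take $\clT\in\scan_{m,K}(\bbR^n)$ together with two rectifiable witnesses $T,T'\in\srR_{m,K}(\bbR^n)$, i.e.\ $\boldsymbol\partial(\clT-\clS(T))=0$ and $\boldsymbol\partial(\clT-\clS(T'))=0$. Since slicing is additive and the pairing of a $0$-current with $\boldsymbol 1$ is linear, subtracting these two scan-cycle conditions pointwise in $(p,y)$ gives $\boldsymbol\partial(\clS(T-T'))=0$. As $T-T'\in\srR_{m,K}(\bbR^n)\subseteq\bfF_m(\bbR^n)$, \cite[Proposition 3.1.6]{de2003size} forces the flat-chain equality $\partial(T-T')=0$, that is $\partial T=\partial T'$, whence $\clS(\partial T)=\clS(\partial T')$ and $\partial\clT$ is unambiguous.

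Next I would check the target of $\partial$. Fix $\clT\in\scan_{m,K}(\bbR^n)$ with a witness $T\in\srR_{m,K}(\bbR^n)$. Writing $\partial T=\boldsymbol 0+\partial T$ with $\boldsymbol 0\in\srR_{m-1,K}(\bbR^n)$ and $T\in\srR_{m,K}(\bbR^n)$ exhibits $\partial T$ as a member of $\srF_{m-1,K}(\bbR^n)$, so Lemma~\ref{lem: integral_flat_chains_are_scans} (used in dimension $m-1$) shows that $\partial\clT=\clS(\partial T)$ is an $(m-1)$-dimensional scan in $K$ with boundary; that is, $\partial\clT\in\scan_{m-1,K}(\bbR^n)$, and taking unions over compact $K$ gives $\partial\colon\scan_m(\bbR^n)\to\scan_{m-1}(\bbR^n)$. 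Finally, with the same $\clT$ and $T$, I would use the identity $\partial(\clS(W))=\clS(\partial W)$ — valid for every integral flat chain $W$ and recorded just before this proposition as a consequence of~(\ref{eqn: boundary_S(T)_S(R)}) — applied with $W=\partial T\in\srF_{m-1,K}(\bbR^n)$ to compute
\[
\partial(\partial\clT)=\partial\bigl(\clS(\partial T)\bigr)=\clS\bigl(\partial(\partial T)\bigr)=\clS(\boldsymbol 0)=\boldsymbol 0 ,
\]
the zero scan; hence $\partial^2=0$.

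I expect the well-definedness step to be the only substantive point: the definition is meaningful precisely because the purely sliced cycle condition $\boldsymbol\partial(\clS(T-T'))=0$ must propagate back to $\partial T=\partial T'$ at the level of flat chains, and this implication is exactly~\cite[Proposition 3.1.6]{de2003size}. Once it is in place, the remaining two assertions are bookkeeping built on Lemma~\ref{lem: integral_flat_chains_are_scans} and the preceding slice-boundary identity.
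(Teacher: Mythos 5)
Your proof is correct and follows essentially the same route as the paper's: subtract the two cycle conditions to get $\boldsymbol\partial(\clS(T-T'))=0$, upgrade this to $\partial T=\partial T'$ via \cite[Proposition 3.1.6]{de2003size}, and then obtain the codomain statement and $\partial^2=0$ from Lemma~\ref{lem: integral_flat_chains_are_scans} together with the identity $\partial(\clS(W))=\clS(\partial W)$. The only minor difference is that Definition~\ref{def: boundary} allows the two witnesses to lie in $\srR_{m,K_1}(\bbR^n)$ and $\srR_{m,K_2}(\bbR^n)$ for \emph{different} compact sets, so the paper first passes to $K_1\cup K_2$ before applying Proposition 3.1.6, whereas you assume both witnesses lie in the same $\srR_{m,K}(\bbR^n)$; adding that one line makes your argument fully general.
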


\begin{proof}
For any $\clT\in \scan_{m}(\bbR^n)$, we first show that $\partial \clT := \clS(\partial T)$ is well-defined in the sense that $\clS(\partial T)$ is independent of the choices of $T\in \srR_{m,K}(\bbR^n)$ that satisfies $\boldsymbol{\partial}(\clT-\clS(T))=0$ and the choice of the compact set $K\subseteq \bbR^n$. 
Indeed, suppose there are two compact sets $K_1, K_2$ and an associated rectifiable current $T_j\in \srR_{m, K_j}(\bbR^n)\subseteq \srR_{m, K_1\cup K_2}(\bbR^n)$ with $\boldsymbol{\partial}(\clT-\clS(T_j))=0$ for $j=1,2$. 
Here, $\clT-\clS(T_j): \bfO^\ast(n, m)\times \bbR^m \to \srF_{0, K_j}(\bbR^n)\subseteq \srF_{0, K_1\cup K_2}(\bbR^n)$.
So, $\boldsymbol{\partial}(\clS(T_1-T_2))=\boldsymbol{\partial}(\clS(T_1)-\clS(T_2))=0$. 
By ~\cite[Proposition 3.1.6.]{de2003size}, $\partial(T_1-T_2)=0$ for $T_1-T_2\in \srR_{m, K_1\cup K_2}(\bbR^n)$. 
Therefore, $\partial T_1=\partial T_2$ and hence $\clS(\partial T_1)=\clS(\partial T_2)$.
This shows that the definition $\partial \clT := \clS(\partial T)$ is well-defined.
By definition, $\partial T\in \srF_{m-1, K}(\bbR^n)$. Thus by Lemma~\ref{lem: integral_flat_chains_are_scans}, $\partial \clT = \clS(\partial T)\in \scan_{m-1, K}(\bbR^n)$ and $\partial^2 \clT = \partial(\clS(\partial T))=\clS(\partial^2 T)=\clS(0)=0$.
\end{proof}

\begin{remark}
Recall that the symbol $\boldsymbol{\partial}$ is only used in the form of $\boldsymbol{\partial} \clT = 0$, denoting that $\clT$ is a cycle and that we cannot apply this symbol to other maps, while $\partial$ is a well-defined boundary operator on $\scan_m(\bbR^n)$. 
In $\scan_m(\bbR^n)$ the boundary operator $\partial$ is equivalent to $\boldsymbol{\partial}$, in the sense that for any $\clT \in \scan_m(\bbR^n)$, $\partial \clT = 0$ if and only if $\boldsymbol{\partial}\clT = 0$. 
\end{remark}

For each compact set $K\subseteq \bbR^n$, the following functional, denoted by $\newM_{H, K}$, serves as a modification of the $H$-mass.
Lemma~\ref{lem:FHequalsMH} shows that it agrees with $\bfM_H$ on the class of scans of rectifiable currents.

\begin{definition}\label{def: L_H}
For any compact $K\subseteq \bbR^n$, we define the functional $\newM_{H, K} : {\it Scan}_{m, K}(\bbR^n) \to \bbR$ by
\begin{equation}
\label{eqn:L_HK}
    \underline{\bfM}_{H, K}(\clT) := \inf\{\liminf_{i\to \infty} \bfM_H(T_i) : T_i \in \srR_{m,K}(\bbR^n), \clS(T_i) \to \clT,\text{ and } \partial(\clS( T_i))=\partial \clT\}.
\end{equation}
We will simply write $\newM_{H}(\clT)$ instead of $\newM_{H, K}(\clT)$ if $K$ is clear from context.
\end{definition}
\begin{remark}
In general, the value of $\newM_{H, K}(\clT)$ depends on the compact set $K$. 
Nevertheless, if $K_1 \subseteq K_2$ are nonempty compact subsets of $\bbR^n$ and $\clT \in \scan_{m, K_1}(\bbR^n) \cap \scan_{m, K_2}(\bbR^n)$, since $\srR_{m,K_1}(\bbR^n)\subseteq \srR_{m,K_2}(\bbR^n)$ and $\srF_{K_2}^H \leq \srF_{K_1}^H$, it follows that $\newM_{H, K_2}(\clT)\leq\newM_{H, K_1}(\clT)$.
\end{remark}

\begin{lem}\label{lem:FHequalsMH}
For any $T \in \srR_{m,K}(\bbR^n)$, $\newM_{H}(\clS(T)) = \bfM_H(T) = \bfM_H(\clS(T))$.
\end{lem}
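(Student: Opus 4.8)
The plan is to establish the two equalities in Lemma~\ref{lem:FHequalsMH} separately. The equality $\bfM_H(T) = \bfM_H(\clS(T))$ is already recorded as~(\ref{eq: MHTequalsMHScan}) (a consequence of the integral-geometric formula~(\ref{eqn: M_H_T}) and the definition~(\ref{eqn: M_H_scan})), so the only real content is to prove $\newM_{H}(\clS(T)) = \bfM_H(T)$ for a rectifiable current $T \in \srR_{m,K}(\bbR^n)$. This splits into the two inequalities $\newM_H(\clS(T)) \le \bfM_H(T)$ and $\newM_H(\clS(T)) \ge \bfM_H(T)$.

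\emph{Upper bound.} For $\newM_H(\clS(T)) \le \bfM_H(T)$, I would use the constant sequence $T_i := T$ as a competitor in the infimum~(\ref{eqn:L_HK}). Clearly $T_i \in \srR_{m,K}(\bbR^n)$, and $\clS(T_i) = \clS(T) \to \clS(T)$ pointwise (trivially, since each term equals the limit), and $\partial(\clS(T_i)) = \clS(\partial T) = \partial \clS(T)$ by Definition~\ref{def: boundary} (using~(\ref{eqn: boundary_S(T)_S(R)})). Hence this sequence is admissible in~(\ref{eqn:L_HK}), and $\liminf_{i\to\infty}\bfM_H(T_i) = \bfM_H(T)$, giving $\newM_H(\clS(T)) \le \bfM_H(T)$.

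\emph{Lower bound.} For $\newM_H(\clS(T)) \ge \bfM_H(T)$, I would take any admissible competitor sequence $\{T_i\} \subseteq \srR_{m,K}(\bbR^n)$ with $\clS(T_i) \to \clS(T)$ pointwise almost everywhere (in the $\srF_K^H$-sense) and $\partial\clS(T_i) = \partial\clS(T)$, and show $\liminf_i \bfM_H(T_i) \ge \bfM_H(T)$. The key is to pass to the slice level: by~(\ref{eqn: M_H_scan}), $\bfM_H(T_i) = \bfM_H(\clS(T_i)) = \frac{1}{\boldsymbol\beta_1(m,n)}\int \bfM_H(\langle T_i,p,y\rangle)\,d(\boldsymbol\theta^\ast_{n,m}\times\clL^m)$. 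For a.e.\ $(p,y)$, $\langle T_i,p,y\rangle \to \langle T,p,y\rangle$ in the $\srF_K^H$ distance on $\bfI_{0,K}(\bbR^n)$, and $\bfM_H$ restricted to $0$-dimensional integral currents is lower semicontinuous with respect to $\srF_K^H$ (this is the lower-semicontinuity fact invoked in the discussion of~\cite[Theorem 3.5.2.]{de2003size} right after the statement of that theorem — I would cite~\cite{de2003size} for it). Therefore $\bfM_H(\langle T,p,y\rangle) \le \liminf_i \bfM_H(\langle T_i,p,y\rangle)$ for a.e.\ $(p,y)$, and Fatou's lemma gives
\[
\bfM_H(\clS(T)) = \frac{1}{\boldsymbol\beta_1(m,n)}\int \bfM_H(\langle T,p,y\rangle)\,d(\boldsymbol\theta^\ast_{n,m}\times\clL^m) \le \liminf_{i\to\infty}\frac{1}{\boldsymbol\beta_1(m,n)}\int \bfM_H(\langle T_i,p,y\rangle)\,d(\boldsymbol\theta^\ast_{n,m}\times\clL^m) = \liminf_{i\to\infty}\bfM_H(T_i).
\]
Taking the infimum over all admissible sequences yields $\bfM_H(T) = \bfM_H(\clS(T)) \le \newM_H(\clS(T))$. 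Combining the two inequalities with~(\ref{eq: MHTequalsMHScan}) finishes the proof.

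\emph{Main obstacle.} The only subtle point is the lower-semicontinuity of $\bfM_H$ on $\bfI_{0,K}(\bbR^n)$ under the $\srF_K^H$ metric; everything else is bookkeeping with the definitions. This fact is essentially what makes the $H$-flat distance the "right" metric on $0$-dimensional slices, and it is used (implicitly) in De Pauw–Hardt's original argument, so I would either cite it directly from~\cite{de2003size} or, if a clean reference is not available, give a short argument: a $0$-dimensional integral current of small $\srF_K^H$-distance to a fixed one must, for small enough distance, actually be close in mass after accounting for cancellation, and $H$ being increasing and subadditive controls $\bfM_H$ from below. I would want to double-check whether the admissibility constraint $\partial\clS(T_i) = \partial\clS(T)$ in~(\ref{eqn:L_HK}) is actually needed for this lemma — it is not, since the lower bound argument above never uses it — but it does no harm, and keeping it makes the constant sequence trivially admissible for the upper bound.
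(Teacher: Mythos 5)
Your proposal is correct and follows essentially the same route as the paper's proof: the constant sequence for the upper bound, and for the lower bound the slice-level lower semicontinuity of $\bfM_H$ under $\srF_K^H$ convergence (which the paper cites as Lemma 3.2.14 of De Pauw--Hardt) combined with Fatou's lemma and the integral-geometric identity. The only difference is cosmetic: the paper cites the precise lemma for the lower semicontinuity rather than sketching it, and your side remark that the boundary constraint is not needed for the lower bound is accurate.
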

\begin{proof}
Taking the constant sequence $T_i = T$ in (\ref{eqn:L_HK}) gives us that $\newM_{H}(\clS(T))\leq \bfM_H(T)$. 
We will now show that $\bfM_H(T) \leq \newM_{H}(\clS(T))$.
Let $T_i \in \srR_{m,K}(\bbR^n)$ with $\clS(T_i) \to \clS(T)$.
By definition, for $\boldsymbol{\theta}^\ast_{n, m}\times \clL^m$ almost every $(p, y)\in \bfO^\ast(n, m)\times \bbR^m$,
$\srF_K^H[\langle T_i, p, y\rangle - \langle T, p, y\rangle] \to 0 $ as $i\to \infty$.
By lower semi-continuity of $\bfM_H$ under $\srF_K^H$ convergence~\cite[Lemma 3.2.14.]{de2003size}, 
\[\bfM_H(\langle T, p, y\rangle) \leq \liminf_{i \to \infty} \bfM_H(\langle T_i, p, y\rangle).\]
Integrating both sides and using Fatou's lemma, we get 
\begin{eqnarray*}
    \bfM_H(\clS(T))&=&\frac{1}{ \boldsymbol{\beta}_1(m, n)}\int_{\bfO^\ast(n, m)\times \bbR^m} \bfM_H(\langle T, p, y\rangle)\, d(\mathbf{\theta}_{n, m}^\ast \times \clL^m)(p, y)\\
    &\leq&\frac{1}{ \boldsymbol{\beta}_1(m, n)}\int_{\bfO^\ast(n, m)\times \bbR^m}  \liminf_{i \to \infty} \bfM_H(\langle T_i, p, y\rangle)\, d(\mathbf{\theta}_{n, m}^\ast \times \clL^m)(p, y)\\
     &\leq&
     \liminf_{i \to \infty}\frac{1}{ \boldsymbol{\beta}_1(m, n)}\int_{\bfO^\ast(n, m)\times \bbR^m}   \bfM_H(\langle T_i, p, y\rangle)\, d(\mathbf{\theta}_{n, m}^\ast \times \clL^m)(p, y)\\
    &= & \liminf_{i \to \infty} \bfM_H(\clS(T_i)).
\end{eqnarray*}
Thus, $\bfM_H(\clS(T)) \leq \newM_{H}(T)$ and hence the result follows from $\bfM_H(\clS(T)) = \bfM_H(T)$.
\end{proof}

The following proposition says that $\newM_{H}$ is subadditive on $\scan_m(\bbR^n)$.
\begin{proposition}
\label{prop:LIsSubadditive}
For any nonempty compact subset $K$ of $\bbR^n$, 
$\newM_{H}(\clT_1 + \clT_2) \leq \newM_{H}(\clT_1) + \newM_{H}(\clT_2)$ for any $\clT_1, \clT _2 \in {\it Scan}_{m, K}(\bbR^n)$.
\end{proposition}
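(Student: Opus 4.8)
The plan is to unwind the definition of $\newM_{H}$ in Definition~\ref{def: L_H} as an infimum over approximating sequences of rectifiable currents, to add together near‑optimal sequences for $\clT_1$ and $\clT_2$ termwise, and to check that the resulting sequence is an admissible competitor for $\clT_1+\clT_2$ whose $H$‑mass is controlled by the subadditivity of $\bfM_H$ on rectifiable currents. The first ingredient I would record is precisely this subadditivity: for $T,T'\in\srR_{m,K}(\bbR^n)$ one has $\|T+T'\|\le\|T\|+\|T'\|$ as Radon measures, hence $\Theta^m(\|T+T'\|,x)\le\Theta^m(\|T\|,x)+\Theta^m(\|T'\|,x)$ for $\clH^m$‑a.e.\ $x$, and since $H$ is nondecreasing and subadditive (Definition~\ref{def:concaveIntegrand}(3),(4)), integrating $H\bigl(\Theta^m(\|T+T'\|,\cdot)\bigr)$ gives $\bfM_H(T+T')\le\bfM_H(T)+\bfM_H(T')$.

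Next I would assemble the scan‑level bookkeeping. Slicing is linear, so $\clS(T+T')=\clS(T)+\clS(T')$ and $\partial\bigl(\clS(T+T')\bigr)=\clS(\partial T)+\clS(\partial T')$ for $T,T'\in\srF_{m,K}(\bbR^n)$. The $H$‑flat distance $\srF_K^H$ on $\srF_{0,K}(\bbR^n)$ satisfies the triangle inequality, equivalently $\srF_K^H(U+V)\le\srF_K^H(U)+\srF_K^H(V)$; this follows from the dimension‑$0$ and dimension‑$1$ cases of the preceding subadditivity together with Definition~\ref{def:HFlatDistance}, and is in any case implicit in its use as a metric in Corollary~\ref{thm:compactness}. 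Using these two facts one sees that $\scan_{m,K}(\bbR^n)$ is closed under addition and that $\partial$ is additive on it: if $T_1,T_2$ are rectifiable fillings with $\boldsymbol{\partial}(\clT_j-\clS(T_j))=0$, then $\clT_1+\clT_2$ is a pointwise a.e.\ limit of maps $\clS(R_i^{(1)}+R_i^{(2)})$ for approximating sequences $R_i^{(j)}\in\srR_{m,K}(\bbR^n)$, hence an $m$‑dimensional scan in $K$; and $T_1+T_2$ is a filling for it, because by Definition~\ref{def: HmassVariation}, $\boldsymbol{\partial}g_1=0$ and $\boldsymbol{\partial}g_2=0$ imply $\boldsymbol{\partial}(g_1+g_2)=0$ since $(g_1+g_2)(p,y)(\boldsymbol{1})=g_1(p,y)(\boldsymbol{1})+g_2(p,y)(\boldsymbol{1})$. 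Then Definition~\ref{def: boundary} yields $\partial(\clT_1+\clT_2)=\clS\bigl(\partial(T_1+T_2)\bigr)=\partial\clT_1+\partial\clT_2$.

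For the estimate itself I would argue as follows. We may assume $\newM_{H}(\clT_1),\newM_{H}(\clT_2)<\infty$. Fix $\epsilon>0$ and, by Definition~\ref{def: L_H}, pick $\{T_i^{(1)}\},\{T_i^{(2)}\}\subseteq\srR_{m,K}(\bbR^n)$ with $\clS(T_i^{(j)})\to\clT_j$ pointwise a.e., $\partial\bigl(\clS(T_i^{(j)})\bigr)=\partial\clT_j$, and $\liminf_i\bfM_H(T_i^{(j)})\le\newM_{H}(\clT_j)+\epsilon$. Passing to a subsequence of the first sequence (preserving the pointwise a.e.\ convergence and the boundary identity), I arrange that $\bfM_H(T_i^{(1)})\to c_1\le\newM_{H}(\clT_1)+\epsilon$. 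Setting $R_i:=T_i^{(1)}+T_i^{(2)}\in\srR_{m,K}(\bbR^n)$, the facts above give $\clS(R_i)=\clS(T_i^{(1)})+\clS(T_i^{(2)})\to\clT_1+\clT_2$ pointwise a.e.\ and $\partial\bigl(\clS(R_i)\bigr)=\partial\clT_1+\partial\clT_2=\partial(\clT_1+\clT_2)$, so $\{R_i\}$ is admissible for $\clT_1+\clT_2$ in Definition~\ref{def: L_H}. Hence, using subadditivity of $\bfM_H$,
\[
\newM_{H}(\clT_1+\clT_2)\le\liminf_i\bfM_H(R_i)\le\liminf_i\bigl(\bfM_H(T_i^{(1)})+\bfM_H(T_i^{(2)})\bigr)=c_1+\liminf_i\bfM_H(T_i^{(2)})\le\newM_{H}(\clT_1)+\newM_{H}(\clT_2)+2\epsilon,
\]
and letting $\epsilon\to0$ gives the claim.

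The genuinely delicate point is the interchange with the $\liminf$: since $\liminf$ is only superadditive in general, one must first pass to a subsequence along which $\bfM_H(T_i^{(1)})$ actually converges before splitting the $\liminf$ of the sum; once that is done, the argument is pure bookkeeping on the subadditivity of $\bfM_H$ and of $\srF_K^H$ and the linearity of the slice map. The only other thing one must be careful to verify cleanly is that $\clT_1+\clT_2$ genuinely lies in $\scan_{m,K}(\bbR^n)$ with boundary $\partial\clT_1+\partial\clT_2$, which is handled by the second paragraph above.
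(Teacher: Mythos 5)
Your proposal is correct and follows essentially the same route as the paper's proof: add near-optimal approximating sequences termwise, verify via the triangle inequality for $\srF_K^H$ and the additivity of slicing that the sum is admissible for $\clT_1+\clT_2$, and use subadditivity of $\bfM_H$ on rectifiable currents, taking care to pass to a subsequence so the $\liminf$ of the sum splits (the paper arranges for both mass sequences to converge, while you only force the first to converge and keep the second sequence's original $\liminf$, which suffices). The extra details you supply — the derivation of $\bfM_H(T+T')\le\bfM_H(T)+\bfM_H(T')$ from density superadditivity and Definition~\ref{def:concaveIntegrand}, and the triangle inequality for $\srF_K^H$ — are taken for granted in the paper but are correct.
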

\begin{proof}
Let $K$ be a nonempty compact subset of $\bbR^n$ such that both $\clT_1, \clT_2 \in \scan_{m, K}(\bbR^n)$.
For each $k = 1, 2$, let $\{T^k_i\}_{i = 1}^\infty$ be a sequence in $\srR_{m, K}(\bbR^n)$ such that $\clS(T_i^k) \to \clT_k$ with $\partial(\clS(T_i^k))=\partial\clT_k$ for all $i \in \bbN$. By picking a subsequence if necessary, we may assume that
for each $k = 1, 2$, 
$\{\bfM_H(T^k_{i})\}_{i = 1}^\infty$ converges. 
Now consider the sequence $\{T_{i}^1 + T_{i}^2\}_{j = 1}^\infty$ and notice that $\clT_1 + \clT_2 \in \scan_{m, K}(\bbR^n)$, $\clS(T_{i}^1 + T_{i}^2) \to (\clT_1 + \clT_2)$, and $\partial (\clS(T_i^1 + T_i^2)) = \partial (\clT_1 + \clT_2)$. 
Indeed, for $\boldsymbol{\theta}^\ast_{n, m}\times \clL^m$ almost every $(p, y)\in \bfO^\ast(n, m)\times \bbR^m$, 
\[
\srF_K^H[(\clS(T_{i}^1 + T_{i}^2)(p, y) - (\clT_1 + \clT_2)(p, y)] \leq \srF_K^H[(\clS(T_{i}^1) - \clT_1)(p, y)] + \srF_K^H[(\clS(T_{i}^2) - \clT_2)(p, y)] \to 0
\]
as $i \to \infty$, and 
\[((\clT_1 + \clT_2) - \clS(T_{i}^1 + T_{i}^2))(p, y)(\boldsymbol{1}) = (\clT_1 - \clS(T_{i}^1)(p, y)(\boldsymbol{1}) + (\clT_2 - \clS(T_{i}^2)(p, y)(\boldsymbol{1}) = 0\]
for all $i \in \bbN$. 
By definition, this implies $\boldsymbol{\partial}((\clT_1 + \clT_2) - \clS(T_{i}^1 + T_{i}^2)) = 0$ for all $i \in \bbN$. 
Thus, 
\begin{align*}
\newM_{H}(\clT_1+\clT_2)&\leq
\liminf_{i \to \infty} \bfM_H(T^1_i + T^2_i) \leq \liminf_{i \to \infty} \bfM_H(T_i^1) + \bfM_H(T_i^2)\\
&= \lim_{i \to \infty} \bfM_H(T_i^1) + \lim_{i \to \infty} \bfM_H(T_i^2) = \liminf_{i \to \infty} \bfM_H(T_i^1) + \liminf_{i \to \infty} \bfM_H(T_i^2).
\end{align*}
Since the sequences $\{T^1_i\}_{i = 1}^\infty$ and $\{T_i^2\}_{i = 1}^\infty$ were arbitrary, we have $\newM_{H}(\clT_1+\clT_2)\leq \newM_{H}(\clT_1)+\newM_{H}(\clT_2)$.  
\end{proof}

\begin{lem}\label{lem:0dimWeakConvergence}
Let $K$ be a compact subset of $\bbR^n$ and let $T_i \in \bfI_0(\bbR^n)$ be a sequence of $0$ dimensional integral currents. 
If $\lim_{i\to \infty}\srF_K^H(T_i) = 0$, then $\lim_{i \to \infty} T_i(\boldsymbol{1}) = 0$.
\end{lem}
\begin{proof}
By definition of $\srF_K^H$, there exists a sequence $\{S_i\}_{i = 1}^\infty$ in $\srR_{1, K}(\bbR^n)$ such that $\lim_{i \to \infty}\bfM_H(T_i - \partial S_i) = 0$.
It is sufficient to prove that $\lim_{i\to\infty}\srF_K(T_i - \partial S_i) = 0$ since this implies that 
\[
\lim_{i \to \infty} T_i(\boldsymbol{1}) =
\lim_{i \to \infty} T_i(\boldsymbol{1}) - S_i(d(\boldsymbol{1})) =
\lim_{i \to \infty}(T_i - \partial S_i)(\boldsymbol{1}) = 0.
\]
To prove that $\lim_{i\to\infty}\srF_K(T_i - \partial S_i) = 0$ we will apply~\cite[Lemma 3.2.13.]{de2003size} which says that any sequence of integral flat chains $\{Q_i\}_{i=1}^\infty$ with $\sup_i \{\bfM(Q_i) + \bfM(\partial Q_i)\} <\infty$ and $\lim_{i\to\infty}\srF_K^H(Q_i) = 0$ will also have that $\lim_{i\to\infty}\srF_K(Q_i) = 0$.
To apply this lemma to our sequence $\{T_i - \partial S_i\}_{i = 1}^\infty$ of $0$ dimensional integral currents, we only need to show that $\sup_i \{\bfM(T_i - \partial S_i)\} < \infty$ and $\lim_{i\to \infty}\srF_K^H(T_i - \partial S_i) = 0$. 
Our last claim is shown by 
$
\srF_K^H(T_i - \partial S_i) \leq \bfM_H(T_i - \partial S_i) \to 0.
$
We may also assume that 
$
\sup_i \{\bfM_H(T_i - \partial S_i)\} < \infty
$. 
Remark 3.2.7.~in~\cite{de2003size} says that $H(\bfM(T)) \leq \bfM_H(T)$ for all $T \in \bfI_0(\bbR^n)$ and hence $\sup_i\{H(\bfM(T_i - \partial S_i))\} < \infty$.
Since $H$ is strictly increasing, we lastly get that $\sup_i\{\bfM(T_i - \partial S_i)\} < \infty$.
\end{proof}

Using this modified version of the $H$-mass $\newM_{H}$ on the class $\scan_{m,K}(\bbR^n)$ of $m$-dimensional scans in $K$ with boundary, we now show that the {\it Plateau's problem with $H$-mass}~(\ref{eq: HPP-scans}) has a solution for each $K$.

\begin{thm}\label{thm: plateausProblemWithHMass}
Let $K$ be a nonempty compact subset of $\bbR^n$.
For any $\clT_0 \in \scan_{m,K}(\bbR^n)$, there exists a scan $\clT^\ast \in \scan_{m,K}(\bbR^n)$ with $\partial \clT^\ast =\partial\clT_0$ such that 
\begin{equation}
    \newM_{H}(\clT^\ast) = \min \{\newM_{H}(\clT): \clT\in \scan_{m,K}(\bbR^n),\ \partial\clT =\partial \clT_0\}.\label{eq:MH-problem}
\end{equation}
Moreover, $\clT^\ast(p, y)\in\bfI_{0, K}(\bbR^n)$ for $\boldsymbol{\theta}^\ast_{n, m}\times \clL^m$ almost every $(p, y)\in \bfO^\ast(n, m)\times \bbR^m$.
\end{thm}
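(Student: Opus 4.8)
The plan is to run the direct method, substituting the BV--compactness result Corollary~\ref{thm:compactness} for the Federer--Fleming compactness theorem, after first reducing to scan cycles by subtracting a fixed rectifiable representative of $\clT_0$. Set $\gamma := \inf\{\newM_H(\clT) : \clT \in \scan_{m,K}(\bbR^n),\ \partial\clT = \partial\clT_0\}$; since $\clT_0$ is itself admissible, $\gamma \le \newM_H(\clT_0) < \infty$. I would fix once and for all a rectifiable current $T_0' \in \srR_{m,K}(\bbR^n)$ witnessing $\clT_0 \in \scan_{m,K}(\bbR^n)$, i.e.\ with $\boldsymbol{\partial}(\clT_0 - \clS(T_0')) = 0$, so that $\partial\clT_0 = \clS(\partial T_0')$.

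Next, take a minimizing sequence $\clT_j \in \scan_{m,K}(\bbR^n)$ with $\partial\clT_j = \partial\clT_0$ and $\newM_H(\clT_j) \to \gamma$, and, using the definition~(\ref{eqn:L_HK}) of $\newM_H(\clT_j)$, select for each $j$ a single current $R_j \in \srR_{m,K}(\bbR^n)$ with $\partial(\clS(R_j)) = \partial\clT_j = \partial\clT_0$ and $\bfM_H(R_j) \le \newM_H(\clT_j) + 1/j$. No estimate on the distance from $\clS(R_j)$ to $\clT_j$ is needed: the limit scan will be produced by compactness, not by chasing the $\clT_j$. Since $\clS(\partial R_j) = \partial(\clS(R_j)) = \partial\clT_0 = \clS(\partial T_0')$ and slicing is injective on flat chains, $\partial R_j = \partial T_0'$; hence $Z_j := R_j - T_0' \in \srR_{m,K}(\bbR^n)$ is a rectifiable cycle and, by \cite[Proposition 3.1.6]{de2003size}, $\clS(Z_j) = \clS(R_j) - \clS(T_0')$ is a scan cycle, $\bfI_{0,K}(\bbR^n)$-valued a.e. The point of passing to the $Z_j$ is that one then has uniform bounds: by subadditivity of $H$, $\bfM_H(\clS(Z_j)) = \bfM_H(Z_j) \le \bfM_H(R_j) + \bfM_H(T_0')$, bounded in $j$ because $\bfM_H(R_j) \le \newM_H(\clT_j) + 1/j \to \gamma$; and by the De Pauw--Hardt estimate bounding the variation of the scan of a rectifiable cycle in $K$ by its $H$-mass, $\bfD(\clS(Z_j)) \le c(n,K)\bigl[\bfM_H(R_j) + \bfM_H(T_0')\bigr]$ is likewise bounded.

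Corollary~\ref{thm:compactness} then furnishes, after passing to a subsequence, an $\boldsymbol{\theta}^\ast_{n,m}\times\clL^m$-measurable map $\clG : \bfO^\ast(n,m)\times\bbR^m \to \bfI_{0,K}(\bbR^n)$ with $\srF_K^H[\clS(Z_j)(p,y) - \clG(p,y)] \to 0$ for a.e.\ $(p,y)$. Applying Lemma~\ref{lem:0dimWeakConvergence} slicewise shows $\clG(p,y)(\boldsymbol{1}) = 0$ a.e., i.e.\ $\boldsymbol{\partial}\clG = 0$. I would then set $\clT^\ast := \clG + \clS(T_0')$. Since $\clS(R_j)(p,y) = \clS(Z_j)(p,y) + \langle T_0',p,y\rangle \to \clT^\ast(p,y)$ for a.e.\ $(p,y)$, $\clT^\ast$ is an $m$-dimensional scan in $K$; since $\boldsymbol{\partial}(\clT^\ast - \clS(T_0')) = \boldsymbol{\partial}\clG = 0$ with $T_0' \in \srR_{m,K}(\bbR^n)$, it follows that $\clT^\ast \in \scan_{m,K}(\bbR^n)$ with $\partial\clT^\ast = \clS(\partial T_0') = \partial\clT_0$. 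The "Moreover" clause is then automatic, because $\clT^\ast(p,y) = \clG(p,y) + \langle T_0',p,y\rangle$ and slices of a rectifiable current are $0$-dimensional integral currents supported in $K$ \cite[4.3.6]{federer-1969-1}, a class closed under addition.

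To finish I would identify $\newM_H(\clT^\ast)$ with $\gamma$. Admissibility of $\clT^\ast$ gives $\newM_H(\clT^\ast) \ge \gamma$. For the reverse inequality, along the extracted subsequence the rectifiable currents $R_j$ meet every requirement in~(\ref{eqn:L_HK}) for $\clT^\ast$ — $\clS(R_j) \to \clT^\ast$ pointwise a.e.\ and $\partial(\clS(R_j)) = \partial\clT_0 = \partial\clT^\ast$ — while $\bfM_H(R_j) \le \newM_H(\clT_j) + 1/j \to \gamma$, so $\newM_H(\clT^\ast) \le \liminf_j \bfM_H(R_j) \le \gamma$. Hence $\newM_H(\clT^\ast) = \gamma$, as required. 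I expect the one substantive ingredient, as opposed to the soft steps (linearity and continuity of slicing, injectivity of $\clS$ on flat chains, Lemma~\ref{lem:0dimWeakConvergence}, and the bookkeeping in~(\ref{eqn:L_HK})), to be the uniform MBV estimate $\bfD(\clS(Z_j)) \lesssim \bfM_H(Z_j)$ for the cycles; this is precisely why one cannot apply compactness directly to $\clS(R_j)$ — a bound in terms of $\bfM_H(R_j)$ alone is false, since neither $\bfM(R_j)$ nor $\bfM_H(\partial R_j)$ is controlled — and why the subtraction of $T_0'$ is built into the argument from the start.
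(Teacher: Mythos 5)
Your proof is correct and follows essentially the same route as the paper's: a diagonal selection of rectifiable representatives $R_j$ with $\bfM_H(R_j)\le \newM_{H}(\clT_j)+1/j$ and $\partial R_j=\partial T_0'$, subtraction of the fixed $T_0'$ to obtain cycles with uniformly bounded $H$-mass and variation, the De Pauw--Hardt compactness corollary, Lemma~\ref{lem:0dimWeakConvergence} for the boundary condition, and the two-sided comparison of $\newM_{H}(\clT^\ast)$ with the infimum. The one point to patch is your opening assertion that $\newM_{H}(\clT_0)<\infty$, which is not automatic from $\clT_0\in\scan_{m,K}(\bbR^n)$ (the admissible approximating sequences in~(\ref{eqn:L_HK}) could all have unbounded $H$-mass, or fail to exist); as in the paper, the degenerate case where the infimum is $+\infty$ should be dispatched separately by taking $\clT^\ast=\clT_0$.
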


\begin{proof}
By assumption $\{\clT \in \scan_{m, K}(\bbR^n) : \partial \clT = \partial \clT_0\}$ contains $\clT_0$, and hence is non-empty. 
We may assume that $\inf \{\newM_{H}(\clT): \clT\in \scan_{m,K}(\bbR^n),\ \partial\clT =\partial \clT_0\}<\infty$ as otherwise $\newM_{H}(\clT_0)=\infty$ and we may simply pick $\clT^\ast=\clT_0$. 
Let $\{\clT^i\}_{i = 1}^\infty$ be an $\newM_{H}$ minimizing sequence in this collection. 
That is, $\clT^i \in \scan_{m,K}(\bbR^n)$, $\partial\clT^i =\partial \clT_0$, and 
\begin{equation}\label{eq:minimizingSequence}
\lim_{i\to \infty}\newM_{H}(\clT^i) = \inf \{\newM_{H}(\clT): \clT\in \scan_{m,K}(\bbR^n),\ \partial\clT =\partial \clT_0\}. 
\end{equation}
Since $\{\clT^i\}_{i = 1}^\infty$ is an $\newM_{H}$ minimizing sequence, without loss of generality, we may assume that 
\begin{equation}\label{eq:unifFH}
\sup_i \newM_{H}(\clT^i) \leq C < \infty
\end{equation}
for some $C > 0$. 
By definition of $\newM_{H}$ in (\ref{eqn:L_HK}), for each $i \in \bbN$, there exists a sequence $\{T_j^i\}_{j = 1}^\infty$ in $\srR_{m,K}(\bbR^n)$ such that
\begin{equation}\label{eq: limit-of-MH}
    \lim_{j \to \infty} \bfM_H(T_j^i) = \newM_{H}(\clT^i), \quad \text{ and } \quad
    \partial(\clS(T_j^i))=\partial \clT^i=\partial\clT_0
     \text{ for all } j\in \bbN.
\end{equation}
Since $\clT_0\in \scan_{m,K}(\bbR^n)$, there exists a $T_0\in \srR_{m,K}(\bbR^n)$ such that $\partial \clT_0=\partial(\clS(T_0))$.
By the proof of Proposition \ref{prop: boundary_well_defined_on _scan_m}, the condition $\partial(\clS(T_j^i))=\partial\clT_0$ imply that $\partial T_j^i=\partial T_0$ for all $i,j \in \bbN$.
By~(\ref{eq: limit-of-MH}), for each $i \in \bbN$ there exists an $N_i \in \bbN$ such that
\begin{equation}\label{eq:MHandFH}
|\bfM_H(T^i_{N_i})- \newM_{H}(\clT^i)|\le\frac{1}{i}.
\end{equation}
Hence by (\ref{eq:unifFH}), $\sup_i \bfM_H(T^i_{N_i}) \leq C + 1 < \infty$. 
Thus, 
\[\bfM_H(\clS(T^i_{N_i} - T_0)) = \bfM_H(T^i_{N_i} - T_0) \leq C + 1 + \bfM_H(T_0) \quad \text{ for all } i \in \bbN.\]
On the other hand, by \cite[Remark 3.3.3]{de2003size}, \cite[Lemma  3.1.2]{de2003size}, and the fact that $\partial T^i_{N_i} = \partial T_0$,
\begin{align*}
\bfD(\clS(T^i_{N_i} - T_0)) &\leq \bfc_1[\bfM_H((T^i_{N_i} - T_0)\times\llbracket \bfO^\ast(n, m)\rrbracket) + \bfM_H(\partial((T^i_{N_i} - T_0)\times\llbracket \bfO^\ast(n, m)\rrbracket))]\\
&=\bfc_1\bfM_H((T^i_{N_i} - T_0)\times\llbracket \bfO^\ast(n, m)\rrbracket)\\
&\leq\bfc_2 \bfM_H(T^i_{N_i} - T_0)\\
&\leq \bfc_2[C + 1 + \bfM_H(T_0)],
\end{align*}
where the constants $\bfc_1$ and $\bfc_2$ depend only on $n$ and $K$.
By Corollary \ref{thm:compactness} of the BV compactness theorem \cite[Theorem 3.4.1]{de2003size} of De Pauw and Hardt, and replacing $\{T_{N_i}^i\}$ by a subsequence if necessary, there exists a $\boldsymbol{\theta}^\ast_{n, m}\times \clL^m$ measurable map $\clR : \bfO^\ast(n, m)\times \bbR^m \to \bfI_{0, K}(\bbR^n)$ such that
\begin{equation}\label{eq:pointwiseConv}
\lim_{i\rightarrow \infty}\srF_K^H[\clS(T_{N_i}^i-T_0)(p, y) - \clR(p,y)]=0
\end{equation}
for $\boldsymbol{\theta}^\ast_{n, m}\times \clL^m$ almost every $(p, y)\in \bfO^\ast(n, m)\times \bbR^m$.

Now, let $\clT^\ast = \clR + \clS(T_0)$.
By definition,~(\ref{eq:pointwiseConv}) implies $\clS(T_{N_i}^i) \to \clR + \clS(T_0)= \clT^\ast$.
By Lemma~\ref{lem:0dimWeakConvergence}, this implies 
\[
(\clT^\ast - \clS(T_0))(p, y)(\boldsymbol{1}) = \lim_{i \to \infty}\langle T_{N_i}^i-T_0, p, y\rangle (\boldsymbol{1}) = 0
\]
$\boldsymbol{\theta}^\ast_{n, m}\times \clL^m$ almost every $(p, y)$, where the second equality is given by~\cite[Proposition~3.1.6.]{de2003size} since $\partial T_{N_i}^i = \partial T_0$ for all $i \in \bbN$.
This gives us our boundary condition, $\boldsymbol{\partial}(\clT^\ast - \clS(T_0)) = 0$, i.e., $\partial \clT^\ast=\partial(\clS(T_0))=\partial \clT_0$.
As a result, $\clT^\ast \in \scan_{m,K}(\bbR^n)$ with $\partial\clT =\partial \clT_0$.
Hence 
\[\newM_{H}(\clT^\ast) \geq \inf \{\newM_{H}(\clT): \clT\in \scan_{m,K}(\bbR^n),\ \partial\clT =\partial \clT_0\}.\]
Additionally, by~(\ref{eqn:L_HK}),~(\ref{eq:MHandFH}), and~(\ref{eq:minimizingSequence}),
\begin{align*}
\newM_{H}(\clT^\ast) \leq \liminf_{i\to \infty} \bfM_H(T_{N_i}^i) = \liminf_{i \to \infty} \newM_{H}(\clT^i) = \inf \{\newM_{H}(\clT): \clT\in \scan_{m,K}(\bbR^n),\ \partial\clT =\partial \clT_0\}.
\end{align*}
Therefore, $\clT^\ast$ satisfies (\ref{eq:MH-problem}) as desired.
\end{proof}

\begin{corollary}
Let $K$ be a nonempty compact and convex subset of $\bbR^n$, and $H$ be a concave integrand. 
For any $T_0 \in \srR_{m,K}(\bbR^n)$ and 
\begin{equation}
   \clT^\ast\in  \mathrm{argmin} \{\newM_{H}(\clT): \clT\in \scan_{m,K}(\bbR^n),\ \partial\clT =\partial \clT_0\}
\end{equation}
with $\clT_0:=\clS(T_0)$,
we have that $\newM_{H}(\clT^\ast)= \Gamma(\bfM_H, T_0, \bbR^n)$.
\end{corollary}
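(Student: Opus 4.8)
The plan is to prove the two inequalities $\newM_{H}(\clT^\ast)\le \Gamma(\bfM_H,T_0,\bbR^n)$ and $\newM_{H}(\clT^\ast)\ge \Gamma(\bfM_H,T_0,\bbR^n)$, using Theorem~\ref{thm: plateausProblemWithHMass} to identify $\newM_{H}(\clT^\ast)$ with $\min\{\newM_{H}(\clT):\clT\in\scan_{m,K}(\bbR^n),\ \partial\clT=\partial\clT_0\}$ throughout. Note first that $\Gamma(\bfM_H,T_0,\bbR^n)$ is finite: $T_0$ is admissible for problem~(\ref{prob:DPH_Hmass_plateau}) and $\bfM_H(T_0)\le\bfM(T_0)<\infty$, since $H(\theta)\le\theta$ for $\theta\in\bbN$ by subadditivity of $H$.

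For the lower bound, I would take any sequence $\{T_i\}_{i=1}^\infty\subseteq\srR_{m,K}(\bbR^n)$ appearing in the defining infimum~(\ref{eqn:L_HK}) of $\newM_{H}(\clT^\ast)$, so that $\partial(\clS(T_i))=\partial\clT^\ast=\partial\clT_0=\partial(\clS(T_0))$. As in the proof of Proposition~\ref{prop: boundary_well_defined_on _scan_m} (applying~\cite[Proposition~3.1.6]{de2003size} to $T_i-T_0\in\srR_{m,K}(\bbR^n)$), this forces $\partial T_i=\partial T_0$. Hence each $T_i$ is admissible for problem~(\ref{prob:DPH_Hmass_plateau}), so $\bfM_H(T_i)\ge\Gamma(\bfM_H,T_0,\bbR^n)$; taking $\liminf$ in $i$ and then the infimum over all such sequences yields $\newM_{H}(\clT^\ast)\ge\Gamma(\bfM_H,T_0,\bbR^n)$. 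The same argument in fact gives $\newM_{H}(\clT)\ge\Gamma(\bfM_H,T_0,\bbR^n)$ for every competitor $\clT\in\scan_{m,K}(\bbR^n)$ with $\partial\clT=\partial\clT_0$, and the competitor set is nonempty (it contains $\clT_0$), so $\min\{\newM_{H}(\clT):\partial\clT=\partial\clT_0\}\ge\Gamma(\bfM_H,T_0,\bbR^n)$.

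For the upper bound, I would use the convexity of $K$. Fix any $T\in\srR_m(\bbR^n)$ with $\partial T=\partial T_0$ and let $\pi_K:\bbR^n\to K$ be the nearest point projection, which is well defined and $1$-Lipschitz because $K$ is compact and convex. Since $\spt(\partial T)=\spt(\partial T_0)\subseteq\spt(T_0)\subseteq K$ and $\pi_K$ restricts to the identity on $K$, we get $\partial((\pi_K)_\#T)=(\pi_K)_\#\partial T=\partial T=\partial T_0$, so $(\pi_K)_\#T\in\srR_{m,K}(\bbR^n)$; moreover $\bfM_H((\pi_K)_\#T)\le(\mathrm{Lip}\,\pi_K)^m\bfM_H(T)\le\bfM_H(T)$ by the scaling behavior of the $H$-mass under Lipschitz pushforward. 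Then $\clS((\pi_K)_\#T)\in\scan_{m,K}(\bbR^n)$, and since $\partial\clS(R)=\clS(\partial R)$ for rectifiable $R$ we have $\partial(\clS((\pi_K)_\#T))=\clS(\partial T_0)=\partial\clT_0$, so $\clS((\pi_K)_\#T)$ is a competitor. By Lemma~\ref{lem:FHequalsMH}, $\newM_{H}(\clS((\pi_K)_\#T))=\bfM_H((\pi_K)_\#T)\le\bfM_H(T)$, hence $\min\{\newM_{H}(\clT):\partial\clT=\partial\clT_0\}\le\bfM_H(T)$; taking the infimum over all admissible $T$ gives $\newM_{H}(\clT^\ast)=\min\{\newM_{H}(\clT):\partial\clT=\partial\clT_0\}\le\Gamma(\bfM_H,T_0,\bbR^n)$.

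Combining the two bounds with Theorem~\ref{thm: plateausProblemWithHMass} proves $\newM_{H}(\clT^\ast)=\Gamma(\bfM_H,T_0,\bbR^n)$. The only delicate point is the upper bound: it is where convexity of $K$ is essential — so that $\pi_K$ exists and is $1$-Lipschitz — together with the fact that $\pi_K$ fixes the constraint $\partial T_0$ (because $\spt(\partial T_0)\subseteq K$) and the inequality $\bfM_H(f_\#S)\le(\mathrm{Lip}\,f)^m\bfM_H(S)$, which follows from the area formula and the subadditivity of $H$. Everything else is a direct unwinding of the definition of $\newM_{H}$, of the scan boundary, and of the already-established equivalence between the scan boundary condition and the rectifiable boundary condition.
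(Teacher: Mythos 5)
Your proof is correct and follows essentially the same route as the paper: the upper bound via $\clS_{m,K}(\bbR^n)\subseteq \scan_{m,K}(\bbR^n)$, Lemma~\ref{lem:FHequalsMH}, and the nearest-point projection $\pi_K$ to reduce $\Gamma(\bfM_H,T_0,\bbR^n)$ to currents supported in $K$; and the lower bound by observing that every approximating sequence in the definition of $\newM_{H}$ satisfies $\partial T_i=\partial T_0$ and is therefore admissible for the rectifiable problem. The only cosmetic difference is that you run the lower-bound argument directly on the defining infimum of $\newM_{H}(\clT^\ast)$, whereas the paper routes it through the sequences $T^i_j$ constructed in the proof of Theorem~\ref{thm: plateausProblemWithHMass}.
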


\begin{proof}
First, recall that $\Gamma(\bfM_H, T_0, \bbR^n) = \inf\{\bfM_H(T) : T \in \srR_{m, K}(\bbR^n),\ \partial T = \partial T_0\}$ since $\bfM_H({\pi_K}_\#(T))\leq \bfM_H(T)$ for any $T\in \srR_m(\bbR^n)$ under the nearest point projection map $\pi_K$.

Since $\clS_{m,K}(\bbR^n) \subseteq \scan_{m,K}(\bbR^n)$, by Lemma \ref{lem:FHequalsMH}, we have $\newM_{H}(\clT^\ast)\leq \inf\{\bfM_H(T) : T \in \srR_{m, K}(\bbR^n),\ \partial T = \partial T_0\} = \Gamma(\bfM_H, T_0, \mathbb{R}^n)$. 
On the other hand, since each $\bfM_H(T^i_j)\ge \inf\{\bfM_H(T) : T \in \srR_{m, K}(\bbR^n),\ \partial T = \partial T_0\}$, by (\ref{eq: limit-of-MH}), we have
$\newM_{H}(\clT^\ast)\geq \Gamma(\bfM_H, T_0, \mathbb{R}^n)$ as well. 
Therefore, $\newM_{H}(\clT^\ast)=\Gamma(\bfM_H, T_0, \mathbb{R}^n)$ as desired.
\end{proof}

\section{Partial Plateau's problem with $H$-mass}\label{sec: PPPwithHmass}

In this section, we assume that $\Phi \in \srD^{m - 1}(\bbR^n)$, $H$ is a concave integrand, and that $B \in \srR_{m-1}(\bbR^n)$ with $\bfM_H(B) < \infty$.

We are interested in what we call, {\it partial Plateau's problem with $H$-mass}:
\begin{equation}\label{partialplateausproblem}
\inf\{\bfE_{H, \Phi}(T) : T \in \srR_{m}(\bbR^n), \partial T \preceq B\}\quad \mathrm{ where }\quad \bfE_{H, \Phi}(T) := \bfM_H(T) - \partial T(\Phi).
\end{equation}

 To consider this problem, as in the previous section, we will consider an extension of $\bfE_{H, \Phi}$ to elements in $\scan_{m, K}(\bbR^n)$ for a fixed compact set $K \subseteq \bbR^n$.
 
 For any $\clT\in \scan_{m, K}(\bbR^n)$, by definition, there exists a $T\in \srR_{m, K}(\bbR^n)$ such that $\partial \clT=\clS(\partial T)$.
We simply write $\partial\clT\preceq \clS(B)$ whenever $\partial T\preceq B$. 
Also, we define $\partial \clT(\Phi):=\partial T(\Phi)$ and
\[\bfE_{H, \Phi}(\clT) := \newM_{H}(\clT) - \partial \clT(\Phi).\] 
 It extends the definition of $\bfE_{H, \Phi}$ in (\ref{partialplateausproblem}) since by Lemma \ref{lem:FHequalsMH}, it holds that $\bfE_{H, \Phi}(\clS(T))=\bfE_{H, \Phi}(T) $ for any $T\in \srR_{m,K}(\bbR^n)$.
\begin{thm}
\label{thm: mainthm}
Let $K$ be a nonempty, compact, and convex subset of $\bbR^n$, $\Phi \in \srD^{m - 1}(\bbR^n)$, $H$ a concave integrand, and let $B \in \srR_{m-1}(\bbR^n)$ with
$\bfM_H(B) < \infty$. 
For any $\clT_0\in \scan_{m, K}(\bbR^n)$, there exists a scan $\clT^\ast:\bfO^\ast(n, m)\times \bbR^m\to \bfI_{0, K}(\bbR^n)$ such that $\clT^\ast \in \scan_{m, K}(\bbR^n)$,  
$\partial(\clT^\ast-\clT_0) \preceq \clS(B)$, and
\begin{equation}
     \bfE_{H, \Phi}(\clT^\ast) = \min\{\bfE_{H, \Phi}(\clT) : \clT \in \scan_{m,K}(\bbR^n),\ \partial(\clT-\clT_0) \preceq \clS(B)\}.
\label{eq:partial-MH-problem}
\end{equation}
Moreover, if $\clT_0=\clS(T_0)$ for some $T_0\in \srR_{m,K}(\bbR^n)$, then
\begin{equation}
\bfE_{H, \Phi}(\clT^\ast) =\inf\{\bfE_{H, \Phi}(T) : T \in \srR_m(\bbR^n), \partial (T-T_0) \preceq B\}.
\label{eq: T^ast_minimizer}
\end{equation}
\end{thm}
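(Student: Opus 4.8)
The plan is to combine the scan / BV‑compactness machinery of Theorem~\ref{thm: plateausProblemWithHMass} with the subcurrent bookkeeping of Theorem~\ref{thm: existenceE_Phi}. Fix once and for all a rectifiable current $T_0'\in\srR_{m,K}(\bbR^n)$ with $\boldsymbol{\partial}(\clT_0-\clS(T_0'))=0$ (which exists by definition of $\scan_{m,K}(\bbR^n)$), so $\partial\clT_0=\clS(\partial T_0')$ and $\partial\clT_0(\Phi)=\partial T_0'(\Phi)$. Write $\mu$ for the infimum in~(\ref{eq:partial-MH-problem}). First I would check that $\mu$ is finite: $\clS(T_0')$ is admissible, with energy $\bfM_H(T_0')-\partial T_0'(\Phi)<\infty$ (using $\bfM_H(T_0')\le\bfM(T_0')<\infty$ since $H(\theta)\le\theta$ for integers $\theta$, and Lemma~\ref{lem:FHequalsMH}), so $\mu<\infty$; and for any admissible $\clT$ the rectifiable boundary correction $\partial T-\partial T_0'$ (for a representative $T$ of $\clT$) is a subcurrent of $B$, so $|\partial\clT(\Phi)|\le\bfM(B)\|\Phi\|+|\partial T_0'(\Phi)|$ and hence $\bfE_{H,\Phi}(\clT)\ge-\bfM(B)\|\Phi\|-|\partial T_0'(\Phi)|$, giving $\mu\in\bbR$.

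Next, take a minimizing sequence $\clT^i\in\scan_{m,K}(\bbR^n)$ with $\partial(\clT^i-\clT_0)\preceq\clS(B)$ and $\bfE_{H,\Phi}(\clT^i)\to\mu$; then $\sup_i\newM_H(\clT^i)<\infty$. For each $i$, Definition~\ref{def: L_H} gives $T^i_j\in\srR_{m,K}(\bbR^n)$ with $\clS(T^i_j)\to\clT^i$, $\partial(\clS(T^i_j))=\partial\clT^i$, and $\bfM_H(T^i_j)\to\newM_H(\clT^i)$; by the injectivity of the scan map on flat chains used in Proposition~\ref{prop: boundary_well_defined_on _scan_m}, the boundary conditions force $\partial(T^i_j-T_0')\preceq B$, so by Remark~\ref{rem: subcurrent} and monotonicity of $H$ the chain $\partial(T^i_j-T_0')$ is real rectifiable with $\bfM\le\bfM(B)$ and $\bfM_H\le\bfM_H(B)$. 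Choose $N_i$ with $|\bfM_H(T^i_{N_i})-\newM_H(\clT^i)|\le 1/i$ and set $S_i:=T^i_{N_i}$, $R_i:=S_i-T_0'$. Subadditivity of $\bfM_H$ plus $\bfM_H(T_0')<\infty$ bounds $\bfM_H(\clS(R_i))=\bfM_H(R_i)$ uniformly, and exactly as in Theorem~\ref{thm: plateausProblemWithHMass} — via \cite[Remark 3.3.3]{de2003size} and \cite[Lemma 3.1.2]{de2003size}, now retaining the nonzero but $H$-mass-bounded boundary term $\partial R_i$ — one bounds $\bfD(\clS(R_i))$ uniformly. Corollary~\ref{thm:compactness} then yields, along a subsequence, a measurable $\clR:\bfO^\ast(n,m)\times\bbR^m\to\bfI_{0,K}(\bbR^n)$ with $\srF_K^H[\clS(R_i)(p,y)-\clR(p,y)]\to0$ a.e.; put $\clT^\ast:=\clR+\clS(T_0')$, so $\clT^\ast(p,y)\in\bfI_{0,K}(\bbR^n)$ a.e. and $\clS(S_i)\to\clT^\ast$ a.e.

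The heart of the argument is to exhibit one fixed rectifiable current witnessing $\clT^\ast\in\scan_{m,K}(\bbR^n)$ with controlled energy, compensating for the fact that the boundaries $\partial R_i$ vary. Since $\partial R_i\in\bfI_{m-1,K}(\bbR^n)$ with $\bfN(\partial R_i)=\bfM(\partial R_i)\le\bfM(B)$, Federer–Fleming compactness \cite[4.2.17(b)]{federer-1969-1} gives, along a further subsequence, $A\in\bfI_{m-1,K}(\bbR^n)$ with $\partial R_i\to A$ in $\srF_K$; then $\partial A=0$ and $A\preceq B$ by Remark~\ref{rem: massLscWithFlatConvergence}. Realizing the $\srF_K$-distance, write $A-\partial R_i=R_i'+\partial S_i'$ with $\bfM(R_i')+\bfM(S_i')\to0$; since $\partial(A-\partial R_i)=0$, the term $R_i'$ is an $(m-1)$-cycle, so coning over a fixed $x_0\in K$ gives $Q_i:=x_0\join R_i'+S_i'\in\srR_{m,K}(\bbR^n)$ with $\partial Q_i=A-\partial R_i$ and $\bfM(Q_i)\to0$. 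Set $\tilde S_i:=S_i+Q_i\in\srR_{m,K}(\bbR^n)$; then $\partial\tilde S_i=A+\partial T_0'$ is independent of $i$ and $\bfM_H(\tilde S_i)\le\bfM_H(S_i)+\bfM(Q_i)$. Because $\srF_K^H(Q_i)\le\bfM_H(Q_i)\le\bfM(Q_i)\to0$, the slicing inequality \cite[Remark 3.2.11]{de2003size} and a subsequence give $\clS(Q_i)\to0$ a.e., hence $\clS(\tilde S_i)\to\clT^\ast$ a.e. with $\partial(\clS(\tilde S_i))=\clS(A+\partial T_0')$ constant; applying \cite[Proposition 3.1.6]{de2003size} to $\tilde S_i-\tilde S_1$ (whose boundary vanishes) and passing to the limit with Lemma~\ref{lem:0dimWeakConvergence} shows $\boldsymbol{\partial}(\clT^\ast-\clS(\tilde S_1))=0$. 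Thus $\clT^\ast\in\scan_{m,K}(\bbR^n)$ with $\partial\clT^\ast=\clS(A+\partial T_0')$, so $\partial(\clT^\ast-\clT_0)=\clS(A)\preceq\clS(B)$ and $\partial\clT^\ast(\Phi)=A(\Phi)+\partial T_0'(\Phi)=\lim_i\partial\clT^i(\Phi)$ (as $\partial R_i\to A$ weakly). Since $\{\tilde S_i\}$ is admissible in Definition~\ref{def: L_H} for $\clT^\ast$, $\newM_H(\clT^\ast)\le\liminf_i\bfM_H(\tilde S_i)\le\lim_i\bfM_H(S_i)=\lim_i\newM_H(\clT^i)=\mu+\partial\clT^\ast(\Phi)$, whence $\bfE_{H,\Phi}(\clT^\ast)\le\mu$; the reverse inequality holds because $\clT^\ast$ is admissible, proving~(\ref{eq:partial-MH-problem}).

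Finally, for~(\ref{eq: T^ast_minimizer}) suppose $\clT_0=\clS(T_0)$ with $T_0\in\srR_{m,K}(\bbR^n)$, and let $\nu$ denote the right-hand side infimum. As in the proof of Theorem~\ref{thm: existenceE_Phi} (inequality~(\ref{eqn: pi_K})), the nearest-point projection $\pi_K$ does not increase $\bfM_H$ and fixes $T_0$ together with every boundary supported in $\spt(B)\subseteq K$, so $\bfE_{H,\Phi}((\pi_K)_\#T)\le\bfE_{H,\Phi}(T)$ and $\nu=\inf\{\bfE_{H,\Phi}(T):T\in\srR_{m,K}(\bbR^n),\ \partial(T-T_0)\preceq B\}$. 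Each admissible rectifiable $T$ gives an admissible $\clS(T)$ with $\bfE_{H,\Phi}(\clS(T))=\bfE_{H,\Phi}(T)$ by Lemma~\ref{lem:FHequalsMH}, hence $\mu\le\nu$; conversely, for any admissible scan $\clT$ the sequence $T_i$ of Definition~\ref{def: L_H} satisfies $\partial(T_i-T_0)\preceq B$ and $\partial T_i(\Phi)=\partial\clT(\Phi)$, so $\bfM_H(T_i)-\partial\clT(\Phi)=\bfE_{H,\Phi}(T_i)\ge\nu$, and $i\to\infty$ gives $\bfE_{H,\Phi}(\clT)\ge\nu$, hence $\mu\ge\nu$. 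Therefore $\bfE_{H,\Phi}(\clT^\ast)=\mu=\nu$. The step I expect to be the main obstacle is the third paragraph: because the boundaries of the diagonal currents are only subcurrents of $B$ (varying with $i$), one must simultaneously extract a common limiting boundary by Federer–Fleming, patch the currents with corrections of vanishing $H$-mass that do not disturb the a.e.\ scan limit, and still keep the $H$-mass under control — all while managing several nested subsequences.
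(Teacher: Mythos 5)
Your proposal is correct, and its skeleton coincides with the paper's: a minimizing sequence of scans, a diagonal extraction of near-optimal rectifiable approximants $T^i_{N_i}$, uniform $\bfM_H$ and $\bfD$ bounds (using $\bfM_H(\partial(T_i-T_0))\le\bfM_H(B)$ exactly as in Lemma~\ref{lem:subcurrentRepresentation}), Corollary~\ref{thm:compactness}, and Federer--Fleming compactness applied to the boundaries $\partial(T^i_{N_i}-T_0)$ followed by a coning construction. The one place you genuinely deviate is the step you yourself flag as the main obstacle. The paper keeps the diagonal sequence untouched: it cones the boundaries themselves, setting $C_i=\boldsymbol{\delta}_v\join\partial(T_i-T_0)$ and $C=\boldsymbol{\delta}_v\join S$, invokes Lemma~\ref{lemma:fillingconvergence} to get $\srF_K(C_i-C)\to 0$, and then verifies the cycle condition $\boldsymbol{\partial}(\clR-\clS(C))=0$ directly by slicing, so that $T_0+C$ serves as the rectifiable witness for $\clT^\ast\in\scan_{m,K}(\bbR^n)$. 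You instead normalize the approximants: you cone the flat-norm decomposition of $A-\partial R_i$ to produce corrections $Q_i$ with $\partial Q_i=A-\partial R_i$ and $\bfM(Q_i)\to 0$, so that $\tilde S_i=S_i+Q_i$ has the \emph{fixed} boundary $A+\partial T_0'$ and the constant-boundary machinery of Theorem~\ref{thm: plateausProblemWithHMass} applies verbatim. Both devices rest on the same ingredients (the boundary limit $A$, respectively $S$, with $A\preceq B$ by Lemma~\ref{lemma:subcurrentandconvergence}, plus the homotopy formula for cones); yours buys a cleaner reduction to the fixed-boundary case and makes $\{\tilde S_i\}$ directly admissible in Definition~\ref{def: L_H} for $\clT^\ast$, at the cost of one more subsequence extraction to ensure $\clS(Q_i)\to 0$ pointwise a.e.\ and of checking $\bfM_H(Q_i)\le\bfM(Q_i)\to 0$. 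Your handling of the final identification $\mu=\nu$ is also slightly more general than the paper's (you bound $\bfE_{H,\Phi}(\clT)\ge\nu$ for every admissible scan rather than only along the minimizing sequence), but reaches the same conclusion.
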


The proof of this theorem will make use of the following two lemmas.

\begin{lem}\label{lem:subcurrentRepresentation}
Suppose that $B \in \srR_{k}(\bbR^n)$ for $k \geq 0$. 
If $A \preceq B$, then $A$ is a $k$ dimensional real rectifiable current with $\bfM_H(A) \leq \bfM_H(B)$. 
\end{lem}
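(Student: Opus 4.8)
The plan is to reduce the statement to the structural description of subcurrents of a rectifiable current recalled in Remark~\ref{rem: subcurrent}, and then to compare $H$-masses via a pointwise density inequality.

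First I would note that $B \in \srR_{k}(\bbR^n)$ has finite mass, so the relation $A \preceq B$ is meaningful and in particular $\bfM(A) \le \bfM(B) < \infty$. Writing $B = \clH^k \myell M \wedge \theta\xi$ with multiplicity $\theta : M \to \bbN$ and orienting $k$-vectorfield $\xi$, Remark~\ref{rem: subcurrent} (that is, \cite[Lemma 3.7]{paolini2006optimal}) provides a Borel function $\lambda : \bbR^n \to [0,1]$ with $A = \clH^k \myell M \wedge \lambda\theta\xi$. This already yields the first assertion of the lemma: $A$ is a $k$-dimensional real rectifiable current (with $\spt(A) \subseteq \spt(B)$), its mass measure is $\|A\| = (\lambda\theta)\,\clH^k \myell M$, and its density satisfies $\Theta^k(\|A\|, x) = \lambda(x)\theta(x)$ for $\clH^k$-a.e.\ $x \in M$ and $\Theta^k(\|A\|, x) = 0$ for $\clH^k$-a.e.\ $x \notin M$; likewise $\Theta^k(\|B\|, x) = \theta(x)$ for $\clH^k$-a.e.\ $x \in M$. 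Since $\clH^k(M) < \infty$, the size of $A$ is finite, so $\bfM_H(A)$ is well defined by Definition~\ref{def:Hmass}.

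Next I would run the pointwise comparison. Because $0 \le \lambda \le 1$, we have $\Theta^k(\|A\|, x) \le \Theta^k(\|B\|, x)$ for $\clH^k$-a.e.\ $x \in \bbR^n$, and since $H$ is strictly increasing by Definition~\ref{def:concaveIntegrand}, it follows that $H\bigl(\Theta^k(\|A\|, x)\bigr) \le H\bigl(\Theta^k(\|B\|, x)\bigr)$ for $\clH^k$-a.e.\ $x$. Integrating this inequality against $\clH^k$ over $\bbR^n$ and invoking Definition~\ref{def:Hmass} gives $\bfM_H(A) \le \bfM_H(B)$, which completes the proof.

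I do not expect a real obstacle here; the only step deserving a line of care is the identification of the density $\Theta^k(\|A\|, \cdot)$ with the product $\lambda\theta$ on $M$ (and its vanishing off $M$), which is the standard computation of densities for measures of the form (Borel function)$\cdot\,\clH^k \myell (\text{rectifiable set})$. The degenerate case $k = 0$ requires no separate treatment: reading $\clH^0$ as counting measure, one has $B = \sum_i a_i \boldsymbol{\delta}_{x_i}$ with $a_i \in \bbN$ and $A = \sum_i \lambda(x_i) a_i \boldsymbol{\delta}_{x_i}$, so $\bfM_H(A) = \sum_i H(\lambda(x_i) a_i) \le \sum_i H(a_i) = \bfM_H(B)$ by monotonicity of $H$.
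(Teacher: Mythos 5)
Your proposal is correct and follows essentially the same route as the paper: both invoke the Paolini--Stepanov subcurrent representation $A = \clH^k\myell M \wedge \sigma\theta\xi$ with $\sigma$ Borel and $[0,1]$-valued, and then conclude by the pointwise inequality $H(\sigma\theta) \le H(\theta)$ integrated over $M$. Your extra care with the density identification and the $k=0$ case is fine but adds nothing beyond what the paper's argument already contains.
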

\begin{proof}
Since $B \in \srR_k(\bbR^n)$, 
\[B = \clH^k\myell M\wedge \theta\xi\]
such that $M$ is an $(\clH^k, k)$-rectifiable subset of $\bbR^n$, $\theta : M \to \bbZ$ is summable, and $\xi : M \to \wedge_k(\bbR^n)$ is a simple $k$-vectorfield on $M$ that coincides with the approximate tangent space of $M$.
By~\cite[\text{Lemma 3.7}]{paolini2006optimal}, $A \preceq B$ implies that
\[A = \clH^m\myell M \wedge\sigma\theta\xi \text{ for some Borel function } \sigma : \bbR^n \to [0, 1].\] 
Thus,
\begin{equation}
\bfM_H(A) = \int_M H(\sigma(x)\theta(x))\,d\clH^m(x)\leq \int_M H(\theta(x))\,d\clH^m(x)= \bfM_H(B)
\end{equation}
since $0 \leq \sigma \leq 1$ and $H$ is monotonically non-decreasing.
\end{proof}

\begin{lem}\label{lemma:fillingconvergence}
Let $m \in \bbN$, $K$ be a nonempty, convex, compact subset of $\bbR^n$, $Z \in \srR_{m-1, K}(\bbR^n)$ with $\partial Z = 0$, and let $\{Z_j\}_{j=1}^\infty \subseteq \srR_{m-1, K}(\bbR^n)$ with $\partial Z_j = 0$ for all $j \in \bbN$. 
If $\lim_{j\to \infty} \srF_K(Z_j - Z) = 0$, then there exists $C, C_1, C_2, \dots$ in $\srR_{m, K}(\bbR^n)$ such that $\partial C = Z$, $\partial C_j = Z_j$ for all $j \in \bbN$, and $\lim_{j\to \infty}\srF_K(C_j - C) = 0$.
\end{lem}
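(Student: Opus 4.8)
The plan is to construct the fillings by coning the cycles $Z$ and $Z_j$ to a common vertex $x_0\in K$, using convexity of $K$ to keep the cones inside $K$, and then to show that the cone construction is Lipschitz with respect to the integral flat norm $\srF_K$. Fix $x_0\in K$ and let $h\colon[0,1]\times\bbR^n\to\bbR^n$ be the affine homotopy $h(t,x):=x_0+t(x-x_0)$. For $W\in\srR_{l,K}(\bbR^n)$ define the cone $\mathrm{Cone}(W):=h_\#\big(\llbracket 0,1\rrbracket\times W\big)$. Since $K$ is convex and $x_0\in K$, $h([0,1]\times K)\subseteq K$, so $\mathrm{Cone}(W)\in\srR_{l+1,K}(\bbR^n)$; the operator $W\mapsto\mathrm{Cone}(W)$ is linear, it satisfies the homotopy formula $\partial\,\mathrm{Cone}(W)=W-\mathrm{Cone}(\partial W)$ for $l\ge 1$, and the cone mass estimate $\bfM(\mathrm{Cone}(W))\le\diam(K)\,\bfM(W)$ (see \cite[4.1.8--4.1.11]{federer-1969-1}). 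Since $\partial Z=0$ and $\partial Z_j=0$, the currents $C:=\mathrm{Cone}(Z)$ and $C_j:=\mathrm{Cone}(Z_j)$ lie in $\srR_{m,K}(\bbR^n)$ and satisfy $\partial C=Z$ and $\partial C_j=Z_j$. (When $m-1=0$ the same identities hold provided one reads the cycle hypothesis as $Z(\boldsymbol{1})=Z_j(\boldsymbol{1})=0$, which is the situation in the applications, where $Z$ and $Z_j$ arise as differences of boundaries, so that they are genuine boundaries.)

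It remains to prove $\srF_K(C_j-C)\to 0$. By linearity $C_j-C=\mathrm{Cone}(Z_j-Z)$, so it suffices to establish the estimate $\srF_K(\mathrm{Cone}(W))\le(1+\diam(K))\,\srF_K(W)$ for every $W\in\srR_{m-1,K}(\bbR^n)$, and then apply it to $W=Z_j-Z$. For the estimate, let $W=R+\partial S$ be a decomposition realizing the minimum in (\ref{eq:flatNorm3}), so that $R\in\srR_{m-1,K}(\bbR^n)$, $S\in\srR_{m,K}(\bbR^n)$, and $\bfM(R)+\bfM(S)=\srF_K(W)$. Using linearity together with the homotopy formula $\mathrm{Cone}(\partial S)=S-\partial\,\mathrm{Cone}(S)$,
\[
\mathrm{Cone}(W)=\mathrm{Cone}(R)+\mathrm{Cone}(\partial S)=\big(\mathrm{Cone}(R)+S\big)+\partial\big(-\mathrm{Cone}(S)\big),
\]
with $\mathrm{Cone}(R)+S\in\srR_{m,K}(\bbR^n)$ and $\mathrm{Cone}(S)\in\srR_{m+1,K}(\bbR^n)$; hence by (\ref{eq:flatNorm3}) and the cone mass estimate,
\[
\srF_K(\mathrm{Cone}(W))\le\bfM\big(\mathrm{Cone}(R)+S\big)+\bfM\big(\mathrm{Cone}(S)\big)\le\diam(K)\,\bfM(R)+\big(1+\diam(K)\big)\bfM(S)\le\big(1+\diam(K)\big)\srF_K(W).
\]
Applying this with $W=Z_j-Z$ gives $\srF_K(C_j-C)\le(1+\diam(K))\,\srF_K(Z_j-Z)\to 0$, which completes the proof.

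There is no serious obstacle here: the argument is linear bookkeeping with currents together with the definition of $\srF_K$. The only points requiring care are that the classical cone/homotopy apparatus — membership of $\mathrm{Cone}(W)$ in $\srR_{l+1,K}(\bbR^n)$, the homotopy formula, and the mass bound — is invoked in the rectifiable category over the convex compact set $K$, and the degenerate case $m=1$, where the hypothesis $\partial Z=0$ must be supplemented by (or understood as) the vanishing of total mass so that $Z$ and $Z_j$ are genuinely boundaries.
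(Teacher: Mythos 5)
Your proof is correct and follows essentially the same route as the paper's: cone $Z$ and $Z_j$ to a fixed vertex in $K$ (convexity keeping the cones in $\srR_{m,K}(\bbR^n)$), use the homotopy formula to identify the boundaries, and convert a minimizing decomposition $Z_j-Z=R_j+\partial S_j$ into the flat-norm bound $\srF_K(C_j-C)\le(1+\diam K)\,\srF_K(Z_j-Z)$, which is exactly the paper's $(1+\rho)\epsilon$ estimate. Your parenthetical about the case $m-1=0$ (where $\partial Z=0$ is vacuous and one must instead require $Z(\boldsymbol{1})=0$ for the cone to have boundary $Z$) is a legitimate point of care that the paper's proof passes over silently, though it is harmless in the paper's applications since the cycles there arise as boundaries.
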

\begin{proof}
For each $j\in \bbN$, let $C_j=\boldsymbol{\delta}_v\join Z_j$ and $C=\boldsymbol{\delta}_v\join Z$ be the cones over $Z_j$ and $Z$ respectively with the fixed vertex $v \in K$~\cite[4.1.11]{federer-1960-1}. 
Note that since $K$ is convex, the cones $C_j$ and $C$ are in $\srR_{m, K}(\bbR^n)$.
By the homotopy formula,
\[\partial C_j=\partial(\boldsymbol{\delta}_v\join Z_j)=Z_j-\boldsymbol{\delta}_v\join \partial Z_j=Z_j
\text{ and } 
\partial C=\partial(\boldsymbol{\delta}_v\join Z)=Z-\boldsymbol{\delta}_v\join\partial Z=Z.\]
For any $\epsilon>0$, since $\lim_{j\to \infty} \srF_K(Z_j - Z) = 0$, there exists an $N\in \mathbb{N}$ such that whenever $j\ge N$, it holds that
$\srF_K(Z_j - Z)<\epsilon$.
Thus, by the definition of the flat norm $\srF_K$, there exists $R_j$ and $S_j$ such that 
\[Z_j-Z=R_j+\partial S_j, \text{ and } \mathbf{M}(R_j) +\mathbf{M}(S_j)<\epsilon.\]
Now,
\begin{align*}
C_j - C & = \boldsymbol{\delta}_v\join Z_j - \boldsymbol{\delta}_v\join Z = \boldsymbol{\delta}_v\join (Z_j-Z)=\boldsymbol{\delta}_v\join (R_j+\partial S_j)\\ &=\boldsymbol{\delta}_v\join R_j+\boldsymbol{\delta}_v\join\partial S_j
=\boldsymbol{\delta}_v\join R_j + S_j-\partial (\boldsymbol{\delta}_v\join S_j),
\end{align*}
where we used the homotopy formula in the last equality. 
Assume that the compact set $K$ is contained in the ball $B(0, \rho)$ for some $\rho>0$. 
Then,
\[\mathbf{M}(\boldsymbol{\delta}_v\join R_j)\le \rho \mathbf{M}(R_j)\text{ and } \mathbf{M}(\boldsymbol{\delta}_v\join S_j)\le \rho \mathbf{M}(S_j),\]
and thus
\[\srF_K(C_j - C)\le \mathbf{M}(\boldsymbol{\delta}_v\join R_j+S_j)+\mathbf{M}(\boldsymbol{\delta}_v\join S_j)\le \rho \mathbf{M}(R_j)+\mathbf{M}(S_j)+\rho \mathbf{M}(S_j) \le (1+\rho)\epsilon.\]
This shows that $\lim_{j\to \infty} \srF_K(C_j - C) = 0$ as desired.
\end{proof}

\begin{remark}
The following proof of Theorem \ref{thm: mainthm} is similar to the one of Theorem \ref{thm: plateausProblemWithHMass}. Nevertheless, as the boundary of the minimizing sequence is not fixed, one can not apply directly Theorem \ref{thm: plateausProblemWithHMass} here.
\end{remark}
\begin{proof}[Proof of Theorem \ref{thm: mainthm}]
Since $\clT_0\in\scan_{m,K}(\bbR^n)$, by definition, there exists a $T_0\in \srR_{m,K}(\bbR^n)$  with $\partial \clT_0=\partial(\clS(T_0))$.
Since the zero-current ${\bf 0}\preceq B$, both $\clT_0$ and $\clS(T_0)$ are contained in the set $\{ \clT \in \scan_{m, K}(\bbR^n) : \partial(\clT-\clT_0) \preceq \clS(B)\}$. 
Let $\{\clT_i\}_{i=1}^\infty$ be an $\bfE_{H, \Phi}$ minimizing sequence in this collection.
That is, $\clT_i \in \scan_{m, K}(\bbR^n)$ with $\partial(\clT_i-\clT_0) \preceq \clS(B)$ for each $i = 1, 2, \dots$, and 
\begin{equation}
    \label{eq:E-minimizingSequence}
\lim_{i \to \infty} \bfE_{H, \Phi}(\clT_i) = \inf\{\bfE_{H, \Phi}(\clT) : \clT \in \scan_{m, K}(\bbR^n),\ \partial(\clT_i-\clT_0) \preceq \clS(B)\}.
\end{equation}
Without loss of generality, we may assume that $\bfE_{H, \Phi}(\clT_i) \leq \bfE_{H, \Phi}(\clS(T_0))=\bfE_{H, \Phi}(T_0)$ for all $i \in \bbN$. 
Note that by definition, our boundary condition $\partial(\clT_i - \clT_0) \preceq \clS(B)$ implies that for each $i \in \bbN$, there exists a $T_i \in \srR_{m,K}(\bbR^n)$ such that $\partial\clT_i =\partial (\clS(T_i))$ and $\partial (T_i-T_0) \preceq B$. 
Note also that by the {\it boundary rectifiability theorem}~\cite{simon-1984-lectures}, $\partial(T_i - T_0) \in \srR_{m-1}(\bbR^n)$ and hence $T_i - T_0$ are integral currents.
Now, 
\begin{align}
\newM_{H}(\clT_i) &= \bfE_{H, \Phi}(\clT_i) + \partial T_i (\Phi)\leq \bfE_{H, \Phi}(T_0) + \partial T_i (\Phi)= \bfM_H(T_0)+\partial (T_i-T_0) (\Phi) \\
&\leq \bfM_H(T_0)+\bfM(\partial (T_i-T_0))||\Phi||\leq \bfM_H(T_0)+\bfM(B)||\Phi|| < \infty,\label{eq:unifLHbound}
\end{align}
since $\bfM_H(T_0)\leq H(2)\bfM(T_0) < \infty$ by using the fact that $H(\theta) \leq H(2) \theta$ for all $\theta \geq 1$ in equation~(\ref{def:HMass}). 
By definition of $\newM_{H}$ in (\ref{eqn:L_HK}), for each $i \in \bbN$, there exists a sequence $\{T_j^i\}_{j = 1}^\infty$ in $\srR_{m,K}(\bbR^n)$ such that
\begin{equation}\label{eq: limit-of-MHT0}
    \lim_{j \to \infty} \bfM_H(T_j^i) = \newM_{H}(\clT_i), \quad \text{ and } \quad \partial (\clS(T_j^i))=\partial\clT_i \text{ for all } j\in \bbN.
\end{equation}
Notice that since $\partial (\clS(T_j^i))=\partial\clT_i=\partial (\clS(T_i))$ it follows that  $\partial T^i_j = \partial T_i$ for all $i,j \in \bbN$.
By~(\ref{eq: limit-of-MHT0}), for each $i \in \bbN$ there exists an $N_i \in \bbN$ such that
\begin{equation}\label{eq:MHandFH_partial_2}
|\bfM_H(T^i_{N_i})- \newM_{H}(\clT_i)|\le\frac{1}{i}.
\end{equation}
Hence by (\ref{eq:unifLHbound}) and  (\ref{eq:MHandFH_partial_2}), for all  $i \in \bbN$,
\begin{align*}
    \bfM_H(\clS(T^i_{N_i} - T_0)) &= \bfM_H(T^i_{N_i} - T_0)\leq \bfM_H(T^i_{N_i})+\bfM_H(T_0)\\
&\leq \newM_{H}(\clT_i)+1+\bfM_H(T_0)\leq 2\bfM_H(T_0)+\bfM(B)||\Phi|| + 1.
\end{align*}
Furthermore, by \cite[Remark 3.3.3]{de2003size}, \cite[Lemma  3.1.2]{de2003size}, Lemma \ref{lem:subcurrentRepresentation} and the fact that $\partial T^i_{N_i} = \partial T_i$, it holds that
\begin{align*}
\bfD(\clS(T^i_{N_i} - T_0)) &\leq \bfc_1[\bfM_H((T^i_{N_i} - T_0)\times\llbracket \bfO^\ast(n, m)\rrbracket) + \bfM_H(\partial((T^i_{N_i} - T_0)\times\llbracket \bfO^\ast(n, m)\rrbracket))]\\
&=\bfc_1[\bfM_H((T^i_{N_i} - T_0)\times\llbracket \bfO^\ast(n, m)\rrbracket)+ \bfM_H(\partial((T_i - T_0)\times\llbracket \bfO^\ast(n, m)\rrbracket))]\\
&\leq\bfc_2[ \bfM_H(T^i_{N_i} - T_0)+\bfM_H(\partial(T_i - T_0))]\\ 
&\leq \bfc_2[2\bfM_H(T_0)+\bfM(B)||\Phi|| + 1 +\bfM_H(B)],
\end{align*}
where the constants $\bfc_1$ and $\bfc_2$ depend only on $n$ and $K$. Thus, for each $i$,
\begin{align*}
    &\bfM_H(\clS(T^i_{N_i} - T_0))+\bfD(\clS(T^i_{N_i} - T_0))\\
    &\leq (1+\bfc_2)(2\bfM_H(T_0)+\bfM(B)||\Phi|| + 1)+\bfc_2\bfM_H(B)<\infty.
\end{align*}

Now, by Corollary \ref{thm:compactness} and replacing $\{T_{N_i}^i\}$ by a subsequence if necessary, we may assume that
\begin{equation}\label{eq:pointwiseConv_partial}
\lim_{i\rightarrow \infty}\srF_K^H[\clS(T_{N_i}^i-T_0)(p, y) - \clR(p,y)]=0
\end{equation}
for $\boldsymbol{\theta}^\ast_{n, m}\times \clL^m$ almost every $(p, y)\in \bfO^\ast(n, m)\times \bbR^m$, where $\clR : \bfO^\ast(n, m)\times \bbR^m \to \bfI_{0, K}(\bbR^n)$ is a measurable map. Let $\clT^\ast = \clR + \clS(T_0)$. By definition,~(\ref{eq:pointwiseConv_partial}) implies $\clS(T_{N_i}^i) \to \clT^\ast$. By Lemma~\ref{lem:0dimWeakConvergence}, this implies for a.e.~$(p, y)$,
\begin{equation}
\label{eq: T^ast-minus-T_i}  
\lim_{i \to \infty} (\clT^\ast - \clS(T_i))(p, y)(\boldsymbol{1}) =\lim_{i \to \infty} (\clT^\ast - \clS(T_{N_i}^i))(p, y)(\boldsymbol{1}) + \lim_{i \to \infty}\langle T_{N_i}^i-T_i, p, y\rangle (\boldsymbol{1}) = 0,
\end{equation}
 where the second equality is given by~\cite[Proposition~3.1.6.]{de2003size} since $\partial T_{N_i}^i = \partial T_i$ for all $i \in \bbN$.
 
 Our next goal is to show that $\clT^\ast\in \scan_{m,K}(\bbR^n)$ and satisfy the boundary condition $\partial(\clT^\ast-\clT_0) \preceq \clS(B)$. Since $\partial(\clT^\ast-\clT_0)=\partial(\clT^\ast-\clS(T_0))=\partial\clR$, it is sufficient to show that $\partial\clR=\partial (\clS(C))$ for some $C\in \srR_{m, K}(\bbR^n)$ with $\partial C \preceq B$. Since $\partial (T_i-T_0) \preceq B$, we have $\bfM(\partial (T_i-T_0))\leq \bfM(B)<\infty$ for each $i$.
By the compactness theorem of rectifiable currents and replacing $(T_i-T_0)$ by a subsequence if necessary, we may assume that
$\lim_{i\rightarrow \infty}\srF_K(\partial (T_i-T_0) - S)=0$
for some $S\in \srR_{m-1}(\bbR^n)$. Since $\partial (T_i-T_0) \preceq B$, by Lemma~\ref{lemma:subcurrentandconvergence}, $S \preceq B$.

Let $C_i=\boldsymbol{\delta}_v \join \partial (T_i-T_0)$ and $C=\boldsymbol{\delta}_v \join S$ be the cones with a fixed cone vertex $v \in K$. 
Note that both $C_i$ and $C$ are in $\srR_{m, K}(\bbR^n)$ since $K$ is convex. 
Then, $\partial C_i=\partial (T_i-T_0)$ and by Lemma \ref{lemma:fillingconvergence}, $\displaystyle\lim_{i\to \infty}\srF_K(C_i - C) = 0$.
By the integral-geometric equality,
\[\int_{\bfO^\ast(n, m)\times \bbR^m} \srF_K(\langle C_{i} - C, p, y\rangle )\, d\left(\boldsymbol{\theta}^\ast_{n, m} \times \clL^m\right) (p, y) \leq \boldsymbol{\beta}_1(n, m)\srF_K(C_{i} - C) \to 0.\]
Thus, for $\boldsymbol{\theta}^\ast_{n, m} \times \clL^m$-a.e. $(p, y) \in \bfO^\ast(n, m)\times \bbR^m$, $\lim_{i \to \infty} \srF_K(\langle C_{i}-C, p, y\rangle) = 0$.
As a result, for $\boldsymbol{\theta}^\ast_{n, m} \times \clL^m$-a.e. $(p, y) \in \bfO^\ast(n, m)\times \bbR^m$, by (\ref{eq: T^ast-minus-T_i}),
\begin{align*}
(\clR - \clS(C))(p, y)(\boldsymbol{1})
&=(\clT^\ast -\clS(T_0)- \clS(C))(p, y)(\boldsymbol{1})\\
&=\lim_{i \to \infty} (\clT^\ast - \clS(T_i))(p, y)(\boldsymbol{1})+ \lim_{i \to \infty} \langle T_{i}-T_0 - C_{i}, p, y\rangle (\boldsymbol{1})= 0,
\end{align*}
because $\partial C_i=\partial (T_i-T_0)$.
This shows that $\partial(\clR - \clS(C)) = 0$. Since $\partial C=S \preceq B$, we have $\partial\clR=\clS(\partial C) \preceq S(B)$.
This gives us our boundary condition, $\partial(\clT^\ast - \clS(T_0+C)) =\partial(\clR - \clS(C))= 0$.
Since $\clS(T_{N_i}^i) \to \clT^\ast$, it follows that $\clT^\ast \in \scan_{m,K}(\bbR^n)$
with $\partial(\clT^\ast-\clS(T_0)) =\partial\clR \preceq B$.
Hence $\clT^\ast$ belongs to the collection $\{\clT \in \scan_{m,K}(\bbR^n) : \partial(\clT-\clS(T_0)) \preceq \clS(B)\}$.
Additionally, by~(\ref{eqn:L_HK}),~(\ref{eq:MHandFH_partial_2}), and~(\ref{eq:E-minimizingSequence}),
\begin{align*}
\bfE_{H, \Phi}(\clT^*)&=
\newM_{H}(\clT^\ast) - \partial (T_0+C)(\Phi)\\
&\leq \liminf_{i\to \infty} \bfM_H(T_{N_i}^i)  -\lim_{i \to \infty} \partial (T_0+C_i)(\Phi)\\
&= \liminf_{i \to \infty} \newM_{H}(\clT_i) -\lim_{i \to \infty}\partial T_i(\Phi)\\
&=\liminf_{i \to \infty} \bfE_{H, \Phi}(\clT_i)\\
&= \inf\{\bfE_{H, \Phi}(\clT) : \clT \in \scan_{m,K}(\bbR^n),\ \partial(\clT-\clS(T_0)) \preceq \clS(B)\}.
\end{align*}
Therefore, $\clT^\ast$ is a desired $\bfE_{H, \Phi}$ minimizer satisfying (\ref{eq:partial-MH-problem}). 

Now, suppose that $\clT_0 = \clS(T_0)$ for some $T_0 \in \srR_{m, K}(\bbR^n)$. 
We prove (\ref{eq: T^ast_minimizer}) as follows. Since $\{\clS(T) : T \in \srR_{m,K}(\bbR^n)\} \subseteq \scan_{m,K}(\bbR^n)$, by Lemma \ref{lem:FHequalsMH}, we have
\begin{align*}
     \bfE_{H, \Phi}(\clT^*) &\leq \inf\{\bfE_{H, \Phi}(T) : T \in \srR_{m,K}(\bbR^n), \partial (T-T_0) \preceq B\}\\
&= \inf\{\bfE_{H, \Phi}(T) : T \in \srR_m(\bbR^n), \partial (T-T_0) \preceq B\}\leq \bfE_{H, \Phi}(T^i_{N_i}) 
\end{align*}
for each $i$. Taking $i\rightarrow \infty$,
by (\ref{eq: limit-of-MHT0}), we have the desired equality (\ref{eq: T^ast_minimizer}).
\end{proof}

In particular, when $T_0=0$, we have the following corollary.

\begin{corollary}\label{cor: main}
Let $K$ be a nonempty, compact, and convex subset of $\bbR^n$, $\Phi \in \srD^{m - 1}(\bbR^n)$, $H$ a concave integrand, and let $B \in \srR_{m-1}(\bbR^n)$ with
$\bfM_H(B) < \infty$.
Then there exists $\clT^\ast:\bfO^\ast(n, m)\times \bbR^m\to \bfI_{0, K}(\bbR^n)$ such that $\clT^\ast \in \scan_{m, K}(\bbR^n)$, 
$\partial\clT^\ast \preceq \clS(B)$, and
\begin{align*}
      \bfE_{H, \Phi}(\clT^*) &= \min\{\bfE_{H, \Phi}(\clT) : \clT \in \scan_{m,K}(\bbR^n), \partial\clT \preceq \clS(B)\}\\
& = \inf\{\bfE_{H, \Phi}(T) : T \in \srR_m(\bbR^n), \partial T \preceq B\}.
\end{align*}
\end{corollary}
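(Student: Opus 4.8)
The plan is to obtain Corollary~\ref{cor: main} as the special case $T_0 = \boldsymbol{0}$ of Theorem~\ref{thm: mainthm}. First I would note that the zero current $\boldsymbol{0}$ lies in $\srR_{m,K}(\bbR^n)$ for every nonempty compact $K \subseteq \bbR^n$, so that $\clT_0 := \clS(\boldsymbol{0})$ is an admissible input for Theorem~\ref{thm: mainthm}; moreover $\clS(\boldsymbol{0})$ is the zero map, so that $\partial\clT_0 = \clS(\partial\boldsymbol{0}) = \boldsymbol{0}$ and ``$\partial(\clT - \clS(\boldsymbol{0}))$'' is literally the map $\partial\clT$. Consequently, for this choice the constraint ``$\partial(\clT - \clT_0) \preceq \clS(B)$'' of Theorem~\ref{thm: mainthm} reduces to ``$\partial\clT \preceq \clS(B)$'', and the constraint ``$\partial(T - T_0) \preceq B$'' reduces to ``$\partial T \preceq B$''.

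Next I would simply invoke Theorem~\ref{thm: mainthm} with $\clT_0 = \clS(\boldsymbol{0})$. It produces a scan $\clT^\ast : \bfO^\ast(n,m)\times\bbR^m \to \bfI_{0,K}(\bbR^n)$ with $\clT^\ast \in \scan_{m,K}(\bbR^n)$ and $\partial\clT^\ast \preceq \clS(B)$, which by~(\ref{eq:partial-MH-problem}) satisfies
\[
\bfE_{H,\Phi}(\clT^\ast) = \min\{\bfE_{H,\Phi}(\clT) : \clT \in \scan_{m,K}(\bbR^n),\ \partial\clT \preceq \clS(B)\},
\]
which is the first displayed equality of the corollary. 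Since moreover $\clT_0 = \clS(T_0)$ for the rectifiable current $T_0 = \boldsymbol{0} \in \srR_{m,K}(\bbR^n)$, the ``Moreover'' clause of Theorem~\ref{thm: mainthm}, namely~(\ref{eq: T^ast_minimizer}), gives
\[
\bfE_{H,\Phi}(\clT^\ast) = \inf\{\bfE_{H,\Phi}(T) : T \in \srR_m(\bbR^n),\ \partial T \preceq B\},
\]
which is the second displayed equality.

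I do not anticipate any genuine obstacle: the entire content sits in Theorem~\ref{thm: mainthm}, and the passage to Corollary~\ref{cor: main} is only a matter of checking that setting $T_0 = \boldsymbol{0}$ is consistent with every hypothesis and constraint — which it is, since $\boldsymbol{0} \preceq B$ always holds, $\partial\boldsymbol{0} = \boldsymbol{0}$, and $\boldsymbol{0} \in \srR_{m,K}(\bbR^n)$ for any nonempty compact $K$. The only minor care needed is to record that $\clS(\boldsymbol{0}) = \boldsymbol{0}$ so that the boundary constraints and boundary identities in the specialization are the ones stated in the corollary.
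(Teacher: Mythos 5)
Your proposal is correct and is exactly the paper's argument: the corollary is obtained by specializing Theorem~\ref{thm: mainthm} to $T_0 = \boldsymbol{0}$ (equivalently $\clT_0 = \clS(\boldsymbol{0})$), with the boundary constraints reducing as you describe. The paper states this in one line ("In particular, when $T_0=0$, we have the following corollary"), and your added checks that $\boldsymbol{0} \in \srR_{m,K}(\bbR^n)$ and $\clS(\boldsymbol{0}) = \boldsymbol{0}$ are the right ones.
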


\bibliographystyle{plain}
\bibliography{bibliography.bib}
\end{document}